\newcommand*{\breakhref}{}
\DeclareRobustCommand*{\breakhref}{%
  \begingroup
  \hyper@normalise\breakhref@
}
\newcommand*{\breakhref@}[2]{%
  \endgroup
  \SOUL@setup
  \def\SOUL@everyspace##1{%
    ##1%
    \dimen@=\fontdimen2\font
    \advance\dimen@ by \fontdimen3\font
    \cleaders\hbox to \dimen@{%
      \hss
      \href{#1}{\ \vphantom{#2}}%
      \hss
    }\hskip\dimen@\relax
    \hspace{-\fontdimen3\font
        plus \fontdimen3\font minus \fontdimen3\font}%
  }%
  \def\SOUL@everysyllable{%
    \href{#1}{\the\SOUL@syllable\vphantom{#2}}%
  }%
  \def\SOUL@everyhyphen{%
    \discretionary{-}{}{}%
  }%
  \def\SOUL@everyexhyphen##1{%
    \SOUL@setkern\SOUL@hyphkern
    \href{#1}{##1\vphantom{#2}}%
    \discretionary{}{}{%
      \SOUL@setkern\SOUL@charkern
    }%
  }%
  \SOUL@{#2}%
}
\numberwithin{equation}{section}
\theoremstyle{plain}
\newtheorem{theorem}{Theorem}[section]
\newtheorem{lemma}[theorem]{Lemma}
\newtheorem{proposition}[theorem]{Proposition}
\theoremstyle{remark}
\newtheorem{definition}[theorem]{Definition}
\newtheorem{assumption}[theorem]{Assumption}
\newtheorem{remark}[theorem]{Remark}
\newtheorem{example}[theorem]{Example}
\newcommand{\eqdef}{\stackrel{\mbox{\tiny\rm def}}{=}}
\newcommand{\eqlaw}{\stackrel{\mbox{\tiny\rm law}}{=}}
\DeclareMathOperator{\supp}{supp}
\DeclareMathOperator{\diam}{diam}
\DeclareMathOperator{\var}{var}
\newcommand{\A}{\ensuremath{\mathcal{A}}}
\newcommand{\B}{\ensuremath{\mathcal{B}}}
\newcommand{\C}{\ensuremath{\mathcal{C}}}
\newcommand{\D}{\ensuremath{\mathcal{D}}}
\newcommand{\cE}{\ensuremath{\mathcal{E}}}
\renewcommand{\S}{\ensuremath{\mathscr{S}}}
\newcommand{\F}{\ensuremath{\mathcal{F}}}
\newcommand{\fL}{\mathfrak{L}}
\newcommand{\N}{\ensuremath{\mathbb{N}}}
\renewcommand{\P}{\ensuremath{\mathcal{P}}}
\newcommand{\R}{\ensuremath{\mathbb{R}}}
\newcommand{\1}{\ensuremath{\mathbf{1}}
}
\newcommand{\cV}{\mathcal{V}}
\def\ff#1#2{\textstyle{\frac{#1}{#2}}}
\let\f\frac
\def\epsilon{\varepsilon}
\def\le{\leq}
\def\Lip{\mathrm{Lip}}
\def\E{\mathbb E}
\newcommand{\norm}[1]{\ensuremath\left\|#1\right\|}
\renewcommand{\geq}{\geqslant}
\renewcommand{\leq}{\leqslant}
\def\CC{{\mathcal C}}
\def\${|\!|\!|}
\def\F{{\mathcal F}}
\def\<{\langle}
\def\>{\rangle}
\def\dom{\mathrm{Dom}}
\def\cov{\mathrm {cov}}
\def\ii{\mathrm{int}}
\def\mfK{\mathfrak{K}}
\newcommand{\vertiii}[1]{{\left\vert\kern-0.25ex\left\vert\kern-0.25ex\left\vert #1 \right\vert\kern-0.25ex\right\vert\kern-0.25ex\right\vert}}
\newcommand{\rom}[1]{(\textup{\uppercase\expandafter{\romannumeral#1}})}
\newcommand{\Cc}{\mathcal{C}_c} 
\newcommand{\Ccinf}{\mathcal{C}_c^\infty} 
\newcommand{\cH}{\mathcal{H}}
\newcommand{\bD}{\mathbb{D}} 
\newcommand{\cU}{\mathcal{U}} 
\newcommand{\nueff}{\nu_{\mathrm{eff}}} 
\renewcommand{\deg}{\textnormal{deg}} 
\newcommand{\tfrk}{\mathfrak{t}} 
\newcommand{\bbullet}{\textcolor{blue}{\bullet}} 
\let\eps\varepsilon
\let\d\partial
\DeclareRobustCommand{\TitleEquation}[2]{\texorpdfstring{\StrLeft{\f@series}{1}[\@firstchar]$\if%
		b\@firstchar\boldsymbol{#1}\else#1\fi$}{#2}}
\definecolor{DarkGreen}{rgb}{0.0, 0.6, 0.0}
\def\dash{\leavevmode\unskip\kern0.18em--\penalty\exhyphenpenalty\kern0.18em}
\def\slash{\leavevmode\unskip\kern0.15em/\penalty\exhyphenpenalty\kern0.15em}
\begin{document}

\begin{frontmatter}
\title{Fluctuations of stochastic PDEs with long-range correlations}
\runtitle{Fluctuations of SPDEs with long-range correlations}


\begin{aug}
\author[A]{\fnms{Luca}~\snm{Gerolla}\ead[label=e1]{luca.gerolla16@imperial.ac.uk}\orcidlink{0009-0007-4637-5130}},
\author[B]{\fnms{Martin}~\snm{Hairer}\ead[label=e2]{martin.hairer@epfl.ch}\orcidlink{0000-0002-2141-6561}}
\and
\author[B]{\fnms{Xue-Mei}~\snm{Li}\ead[label=e3]{xue-mei.li@epfl.ch}\orcidlink{0000-0003-1211-0250}}

\address[A]{Imperial College London\printead[presep={,\ }]{e1}}
\address[B]{EPFL \& Imperial College London\printead[presep={,\ }]{e2,e3}}

\end{aug}

\begin{abstract}
We study the large-scale dynamics of the solution to a nonlinear stochastic heat equation (SHE) in dimensions $d \geq 3$ with long-range dependence. This equation is driven by multiplicative Gaussian noise, which is white in time and coloured in space with non-integrable spatial covariance that decays at the rate of $|x|^{-\kappa}$ at infinity, where $\kappa \in (2, d)$. Inspired by recent studies on SHE and KPZ equations driven by noise with compactly supported spatial correlation, we demonstrate that the correlations persist in the large-scale limit.  The fluctuations of the diffusively scaled solution converge to the solution of a stochastic heat equation with additive noise whose correlation is the Riesz kernel of degree $-\kappa$. Moreover, the fluctuations converge as a distribution-valued process in the optimal H\"older topologies.
\end{abstract}



\begin{keyword}[class=MSC]
\kwd[Primary ]{60H15}
\kwd{60H05}
\kwd{60F05}
\end{keyword}

\begin{keyword}
\kwd{stochastic heat equation}
\kwd{fluctuations}
\kwd{large scale dynamics}
\kwd{non-integrable spatial correlation}
\end{keyword}

\end{frontmatter}

\section{Introduction}

We investigate the long-term statistical properties of the nonlinear stochastic heat equation (SHE) on $\R^d$ subject to multiplicative random forcing. Consider the equation 
\begin{equ}
	\label{eq:basic_mSHE}
	\partial_t u(t,x)=\ff 12 \Delta u (t,x)+\beta \sigma(u(t,x)) \xi(t,x), \qquad u_0 \equiv 1\;;
\end{equ}
where $\sigma:\R_+ \to \R$ is a real valued function that modulates the interaction between the solution $u(t,x)$ and the noise process $\xi$.  
The noise $\xi$ is centred Gaussian, white in time and coloured in space. Unlike most previous work on the
subject,  this article focuses on the case where the noise exhibits heavy-tailed correlations that are
in particular non-integrable at infinity. We restrict ourselves to the high-dimensional case, which in this context means  $d \geq 3$. 
While it is relatively straightforward (see the proof of Theorem~\ref{thm:stat_field} below) to show that \eqref{eq:basic_mSHE} admits a space-time stationary solution $Z$, the main purpose of this article is to study its large-scale fluctuations, leading to a central limit theorem for distribution-valued stochastic processes.

The stochastic heat equation, while significant in its own right \cite{molchanov1991ideas}, has attracted considerable interest due to its connection, via the Cole–Hopf transform \cite{Bertini-Giacomin97}, to the KPZ (Kardar–Parisi–Zhang) equation, a widely studied 
surface growth model \cite{kardar1986dynamic}. The solution of the linear SHE, where 
 $\sigma(u)=u$, represents the partition function of a system of directed polymers in a quenched random potential, thereby establishing a connection to statistical mechanics \cite{Bertini-Cancrini-95}.

The question of large-scale fluctuations is of particular interest. Recent studies \cite{gu18edwards, magnen2018scaling, dunlap2020_KPZ_fluctuations, cosco2020law, Lygkonis-Zygouras20, gu20_nlmSHE, gu_komorowski21_kpz_torus_fluct, dunlap2018homo, comets2018fluctuation} have shown that, under the condition that the correlations of the driving noise are either compactly supported or decay sufficiently fast, both the SHE and KPZ equations exhibit Gaussian fluctuations at large scales in the weak disorder regime, namely when the interaction strength $\beta$ is small. In this regime, the fluctuations converge to the Edwards–Wilkinson model, with an effective variance that depends on the spatial mollification of the noise (cf. Remark~\ref{rem:effective_var}).

Assuming the noise correlation decays at a polynomial rate of order $\kappa \in (2, d)$ with an asymptotic profile, we demonstrate that spatial correlation in the noise persists in the scaling limit. This leads to fluctuations governed by the stochastic heat equation with additive, spatially coloured noise $\dot W^\kappa$. Our results differ from the case of compactly supported noise in two further significant ways. First, the scaling factor is of order $\epsilon^{2-\kappa}$, indicating that the decay order of the correlations plays
the role of an ``effective dimension''.  Second, the formula for the effective variance of the limit  $\cU$ differs from that in the integrable correlation case. (This difference 
is expected, since applying that formula to our setting leads to a diverging quantity, 
see Remark~\ref{remark-1.8} for more details.)

The natural question whether our theorem could inform the fluctuation analysis of KPZ equation or be applicable to the critical decay rate $\kappa=2$ reveals 
substantial challenges. The Cole-Hopf transform $h=\log(u)$ is a singular mapping, complicating direct application. This issue is addressed in \cite{GHL24_KPZ}. 
Analogous questions in the critical decay case are the subject of ongoing research.  In the compactly supported scenario, such questions have already been satisfactorily addressed for the \textit{anisotropic} KPZ equation \cite{AKPZ2D1,AKPZ2D2,MR4499841,AKPZ2D3} and 
SHE\slash KPZ \cite{CSZ17-universality,caravenna2019moments_CSZ18a,CSZ20-KPZ-full-subcritical, gu2020gaussian_KPZ_2D,tao2022gaussian} in the critical dimension $d =2$.

Our results hold when the coupling constant $\beta$ is sufficiently small.
We note that the existence of the weak disorder regime and a transition at $\kappa = 2$ was already pointed out in a similar
context in \cite{Lacoin}. 
 Whilst one may expect a phase transition as in the compactly supported covariance case \cite{MSZ16_weak_strong_mshe, cosco2020law}, where the law of large numbers and fluctuations limit cease to hold for higher values of $\beta$,
it is at present unclear whether a similar behaviour is present in the long-range correlated setting.

 Finally we remark that there has been extensive research on stochastic heat equations. In particular we would like to highlight the  references \cite{Gaertner88, Mueller91, Krug-Spohn92, Carmona-Molchanov94,  Bertini-Cancrini-95, Bertini-Giacomin97, Carmona-Viens98, Bertini-Giacomin99,  Conus-Joseph-Khoshnevisan13, Huang-Nualart-Viitasaari-Zheng20, Kohatsu-Higa-Nualart21, Chen-Eisenberg24} which focus on the study of large scale (time and \slash or space) dynamics.

\subsection{Main Results}

We let $u$ be as in \eqref{eq:basic_mSHE} and, for $\eps > 0$, we write $u_\eps(t,x) = u(t/\eps^2, x/\eps)$, 
which  solves
	\begin{equ}\label{eq:rescaled_eqn_kappa}
		\partial_t u_\epsilon(t,x)=\ff 12 \Delta u_\epsilon(t,x)+\beta \epsilon^{\frac \kappa 2 -1}  \sigma(u_\epsilon(t,x)) \xi^\epsilon(t, x), \qquad u_\epsilon(0, x) \equiv 1\;,
	\end{equ}
	where $\xi^\epsilon (t,x) = \eps^{-\frac\kappa2-1}\xi(t/\eps^2, x/\eps)$ is a  Gaussian noise 
	that is white in time with spatial 
	covariance $\epsilon^{-\kappa} R(\frac{x}{\epsilon})$ and the product is again interpreted as an Itô integral. 	
Given a test function $g$,  we furthermore set
\begin{equ} \label{Xt}
	X_t^{\epsilon, g}=\epsilon^{1-\frac \kappa 2}\int_{\R^d}  \bigl( u_\epsilon(t,x)-\E u_\epsilon(t,x)\bigr)g(x) dx.
\end{equ}
Note that $\E u_\epsilon(t,x) = 1$ by our assumption on the initial condition, and it is  straightforward to see 
that a law of large numbers holds.

Throughout the article, we make the following assumptions on the noise. 
\begin{assumption}\label{assump-noise}
The noise $\xi$  in (\ref{eq:basic_mSHE}) is a mean zero space time Gaussian noise with $\E [ \, \xi_t(x) \xi_s(y)\,]=\delta(t-s) R(x-y)$ where $R: \R^d\to \R$ is smooth and positive definite.  There exists
	$\kappa \in (2,d)$ such that  $|R(x)| \lesssim (1 + |x|)^{-\kappa}$  and, for any $x \in \R^d \setminus \{0\}$,
	\begin{equ}\label{ass:R_conv}
		\epsilon^{-\kappa} R( \epsilon^{-1}x ) \to |x|^{-\kappa} \quad \text{as} \quad \epsilon \to 0.
	\end{equ}
\end{assumption}

\begin{example}
	The function $R(x) = (1+  |x|^2)^{-\kappa/2}$ satisfies the 
	above assumption.
\end{example}

From assumption \eqref{ass:R_conv}, it is immediate that $\xi^\epsilon$ converges in law to a spatially correlated noise with covariance $|x|^{-\kappa}$. In Theorem 2.10, we demonstrate that the solution of (\ref{eq:basic_mSHE}) converges uniformly as $t\to \infty$ to a stationary field $Z$. Our main result can then be formulated as follows.

\begin{theorem} \label{thm:basic-convergence}  
	Let $d\geq 3$,  $\sigma$ be Lipschitz continuous, 
	and suppose that $\xi$ satisfies Assumption~\ref{assump-noise}.	Then, there exists a $\beta_0>0$ such that for all $\beta < \beta_0$, and for any time indices 
	$0 < t_1 \leq \dots \leq t_n$, and for any
	smooth functions with compact support $\{g_i\}_{i =1, \dots, n}~\subset~\Ccinf(\R^d)$,  the following random variables converge, as $\epsilon\to 0$, in distribution: 	$$  (X_{t_1}^{\epsilon,g_1}, \dots, X_{t_n}^{\epsilon,g_n}) \Rightarrow \bigg(\int_{\R^{d}} \cU(t_1,x) g_1(x) \, dx, \dots , \int_{\R^{d}} \cU(t_n,x)  g_n(x) \, dx\bigg). $$
	Here, $\cU$ denotes  the solution to   the additive stochastic heat equation
	\begin{equ}[e:limitEW]
		\partial_t \cU(t,x)=\ff 12 \Delta \cU (t,x) + \beta \nueff \, \dot W^\kappa (t,x), \qquad \cU(0, \cdot) \equiv 0,
	\end{equ}
	with effective variance $\nueff^2 =  |\E [ \sigma (Z(0))]|^2$  and covariance of the Gaussian noise given by
	\begin{equ} \label{eq:EW_noise_covariance_defn}
		\E [\,\dot W^\kappa(t,x) \dot W^\kappa(s,y)\,]= \delta(t-s) |x-y|^{-\kappa}.
	\end{equ}
\end{theorem}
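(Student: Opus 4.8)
The plan is to use the Duhamel (mild) formulation of \eqref{eq:basic_mSHE} to represent $X_t^{\epsilon,g}$ as a stochastic integral, show that the effective modulation $\sigma(u_\epsilon)$ can be replaced by its mean $\E[\sigma(Z(0))]$ up to a negligible error, and then identify the limit of the resulting Gaussian-like object with the solution of \eqref{e:limitEW} tested against $g$. Writing $p_t$ for the heat kernel, stationarity gives
\begin{equ}
X_t^{\epsilon,g} = \beta\epsilon^{1-\frac\kappa2}\int_0^t\!\!\int_{\R^d} (p_{t-s}*g)(x)\,\sigma(u_\epsilon(s,x))\,\xi^\epsilon(ds,dx),
\end{equ}
so the first step is the \emph{homogenisation/averaging step}: replace $\sigma(u_\epsilon(s,x))$ by the constant $\nu_{\mathrm{eff}}=\E[\sigma(Z(0))]$. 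One controls the difference $\sigma(u_\epsilon(s,x))-\nu_{\mathrm{eff}}$ via the Lipschitz bound on $\sigma$, the exponential approach to stationarity established for $Z$ (Theorem~\ref{thm:stat_field}), and an $L^2$ Itô isometry estimate; the point is that the covariance of the martingale increments, weighted by $\epsilon^{-\kappa}R(\cdot/\epsilon)\to|\cdot|^{-\kappa}$, when paired with the bounded, decaying kernel $(p_{t-s}*g)$ and multiplied by $\epsilon^{2-\kappa}$, produces a contribution that is $o(1)$ because the centred field $\sigma(u_\epsilon)-\nu_{\mathrm{eff}}$ has correlations that decay faster than the Riesz weight can compensate. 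This reduces matters to the linear, explicitly Gaussian object
\begin{equ}
\widetilde X_t^{\epsilon,g} = \beta\,\nu_{\mathrm{eff}}\,\epsilon^{1-\frac\kappa2}\int_0^t\!\!\int_{\R^d} (p_{t-s}*g)(x)\,\xi^\epsilon(ds,dx).
\end{equ}

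The second step is the \emph{CLT for the linearised functional}. Since $\widetilde X_t^{\epsilon,g}$ is exactly Gaussian (a Wiener integral against $\xi^\epsilon$), it suffices to show convergence of the covariance structure: for the multi-time, multi-test-function family one computes
\begin{equ}
\E[\widetilde X_{t_i}^{\epsilon,g_i}\widetilde X_{t_j}^{\epsilon,g_j}]
= \beta^2\nu_{\mathrm{eff}}^2\,\epsilon^{2-\kappa}\!\!\int_0^{t_i\wedge t_j}\!\!\!\int\!\!\int (p_{t_i-s}*g_i)(x)(p_{t_j-s}*g_j)(y)\,\epsilon^{-\kappa}R\!\Big(\tfrac{x-y}{\epsilon}\Big)\,dx\,dy\,ds,
\end{equ}
and after the change of variables $x\mapsto\epsilon x,\ y\mapsto \epsilon y$ one is left with an integral whose kernel converges pointwise to $|x-y|^{-\kappa}$ by \eqref{ass:R_conv}; the uniform bound $|R(x)|\lesssim(1+|x|)^{-\kappa}$ together with $\kappa\in(2,d)$ furnishes an integrable dominating function (the Riesz kernel $|z|^{-\kappa}$ is locally integrable precisely because $\kappa<d$, and the heat kernels provide decay at infinity), so dominated convergence yields the limit $\beta^2\nu_{\mathrm{eff}}^2\int_0^{t_i\wedge t_j}\!\int\!\int (p_{t_i-s}*g_i)(x)(p_{t_j-s}*g_j)(y)|x-y|^{-\kappa}\,dx\,dy\,ds$, which is precisely $\E[\langle\cU(t_i),g_i\rangle\langle\cU(t_j),g_j\rangle]$ by the mild formulation of \eqref{e:limitEW} and the definition \eqref{eq:EW_noise_covariance_defn}. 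Convergence of covariances of a Gaussian family implies convergence in distribution of the full vector, and combined with the first step (which transfers to the non-Gaussian $X^{\epsilon}$ via Slutsky) this gives the claim.

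The \textbf{main obstacle} is the averaging step: unlike the compactly supported case, here the error term is integrated against the long-range weight $\epsilon^{-\kappa}R(\cdot/\epsilon)$, so one cannot simply bound correlations by a compactly supported envelope. I expect one needs a quantitative estimate on the spatial decay of correlations of $\sigma(u_\epsilon(s,\cdot))-\nu_{\mathrm{eff}}$ — or more precisely on the two-point function of $u_\epsilon$ minus its mean — uniform in $\epsilon$ and $s\geq\epsilon^2$, showing this decay beats $|x-y|^{-\kappa}$ after integration; this is where the smallness of $\beta$ (weak disorder) enters, via a perturbative/Picard-type expansion or a cluster expansion of the stationary solution $Z$, and where the high-dimensionality $d\geq3$ is used to ensure the relevant heat-kernel convolutions ($\int p_s(x)^2\,dx \sim s^{-d/2}$) are integrable in time near zero after the $\epsilon^{2-\kappa}$ rescaling. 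A secondary technical point is justifying the Itô-integral manipulations and stationarity of $Z$ rigorously on the whole space $\R^d$, but these are expected to follow from standard arguments once the a priori moment bounds on $u$ from the weak-disorder theory are in hand.
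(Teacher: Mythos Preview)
Your overall strategy---linearise by replacing $\sigma(u_\epsilon)$ with the constant $\nu_{\mathrm{eff}}$, control the resulting $L^2$ error via It\^o isometry, then use that the linearised object $\widetilde X^\epsilon$ is exactly Gaussian with convergent covariance---is correct and is a genuinely different, more elementary route than the paper's. The paper instead applies Stein's method in the Malliavin framework (Proposition~\ref{prop:multi_Stein}): it writes $X_i^\epsilon=\delta(v_i^\epsilon)$, decomposes $\langle DX_i^\epsilon, v_j^\epsilon\rangle_\cH = A_{1,\epsilon}+A_{2,\epsilon}$, and then must show $\var(A_{1,\epsilon})\to 0$ and $\|A_{2,\epsilon}\|_2\to 0$, which requires the Feynman diagram machinery of Section~\ref{sec:Feynman} and the estimates of Lemmas~\ref{lem:res_terms_estimates_multi}--\ref{lem:diagram1_finite}. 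Your approach bypasses all of this: once the scaling is right, $\E|X_t^{\epsilon,g}-\widetilde X_t^{\epsilon,g}|^2$ is an integral of $(P_{t-s}*g)(x)(P_{t-s}*g)(y)\Phi_\epsilon(s,x,y)\,\epsilon^{-\kappa}R((x-y)/\epsilon)$ with $\Phi_\epsilon=\cov(\sigma(u_\epsilon(s,x)),\sigma(u_\epsilon(s,y)))+(\E\sigma(u_\epsilon(s,0))-\nu_{\mathrm{eff}})^2$; both summands tend to $0$ pointwise (by Lemma~\ref{lem:Du_L2+cov_estimate} and Theorem~\ref{thm:stat_field} respectively), are uniformly bounded, and the whole integrand is dominated by $C|x-y|^{-\kappa}$ times integrable heat-kernel factors---so plain dominated convergence finishes it. What the paper's heavier approach buys is a quantitative rate $\epsilon^{\kappa/2-1}$ on the residual terms, which yours does not directly produce; for the bare statement of Theorem~\ref{thm:basic-convergence} this is not needed.

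That said, your execution has a scaling error that, left uncorrected, would derail the argument. The prefactor $\epsilon^{1-\kappa/2}$ in your displayed formula for $X_t^{\epsilon,g}$ is spurious: it cancels against the $\epsilon^{\kappa/2-1}$ in the mild formulation of \eqref{eq:rescaled_eqn_kappa}, so the correct representation is simply $X_t^{\epsilon,g}=\beta\int_0^t\!\int (P_{t-s}*g)(y)\,\sigma(u_\epsilon(s,y))\,\xi^\epsilon(ds,dy)$ with no $\epsilon$-power in front. Consequently the $\epsilon^{2-\kappa}$ in your covariance formula should not be there either (with it present the covariance would diverge, since $\kappa>2$), and the change of variables $x\mapsto\epsilon x$ is neither needed nor helpful---one applies dominated convergence directly in macroscopic coordinates using $\epsilon^{-\kappa}|R(z/\epsilon)|\lesssim|z|^{-\kappa}$. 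Two minor points: the approach to stationarity in Theorem~\ref{thm:stat_field} is polynomial, not exponential; and the ``main obstacle'' you flag is milder than you suggest---you do not need the correlations of $\sigma(u_\epsilon)-\nu_{\mathrm{eff}}$ to \emph{beat} $|x-y|^{-\kappa}$, only to vanish pointwise while remaining bounded, which is exactly what Lemma~\ref{lem:Du_L2+cov_estimate} provides.
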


\begin{remark}
	At present  it is unclear what would happen with $\kappa \leq 2$ and in the edge case $\kappa =d$.
	In the latter case one would expect to see Gaussian fluctuations, but with a logarithmic correction in the scaling, similarly to what happens in dimension $2$. In the case $\kappa < 2$ on the other hand, one
	expects to see a situation similar to the case of white noise in dimension $1$ with some
	non-trivial scaling exponents. 
\end{remark}

\begin{remark}\label{rem:beta_restriction}
The restriction to small values $\beta<\beta_0$ is used for deriving uniform bounds on the solution $u$ and its Malliavin derivative. It eludes us whether there is a critical value of $\beta$
beyond which \eqref{eq:basic_mSHE} fails to admit a stationary solution. 
\end{remark}

In Section~\ref{sec:weak_convergence}, we also show that $X^\epsilon$ converges weakly to $\cU$ as a  H\"{o}lder continuous distribution-valued stochastic process.
\begin{theorem}\label{thm:weak_conv_1}
Let $d\geq 3$, $\sigma$ be Lipschitz continuous, 
	and suppose that $\xi$ satisfies Assumption~\ref{assump-noise}.	
Then, for any $\gamma \in (0,\frac 1 2)$ and $\alpha < 1 - \frac\kappa2 - 2\gamma$, there exists $\beta_{\gamma,\alpha} >0$ such that, for any $\beta <\beta_{\gamma,\alpha}$ and %
$T>0$, $X^\epsilon$ converges to $\cU$ weakly in $\C^\gamma ([0,T], \C^{\alpha})$.
\end{theorem}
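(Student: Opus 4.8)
\textbf{Proof strategy for Theorem~\ref{thm:weak_conv_1}.}
The plan is the standard one for convergence of distribution-valued processes: establish (i) convergence of the finite-dimensional distributions of $X^\epsilon$, (ii) tightness of $\{X^\epsilon\}_{\epsilon\in(0,1]}$ in $\C^\gamma([0,T],\C^\alpha)$, and (iii) identification of every subsequential limit with $\cU$. Point (i) is already provided by Theorem~\ref{thm:basic-convergence}, and point (iii) will follow from (i) by a soft argument, so the substantive work is the tightness statement (ii), which I would deduce from uniform-in-$\epsilon$ moment bounds on $X^\epsilon$ and its time increments, tested against rescaled bump functions, combined with a Kolmogorov-type criterion in Besov spaces.

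For the moment bounds I would start from the mild formulation of \eqref{eq:rescaled_eqn_kappa}: since $\E u_\epsilon(t,x)=1$, a stochastic Fubini argument yields, for $g\in\Ccinf(\R^d)$,
\begin{equ}
X_t^{\epsilon,g}=\beta\int_0^t\!\!\int_{\R^d}(P_{t-s}g)(y)\,\sigma(u_\epsilon(s,y))\,\xi^\epsilon(ds,dy),
\end{equ}
where $P$ denotes the heat semigroup. Applying the Burkholder--Davis--Gundy inequality for the martingale noise $\xi^\epsilon$, using $|\sigma(v)|\lesssim 1+|v|$ together with the uniform bound $\sup_{\epsilon\in(0,1]}\sup_{t\geq0,\,x}\E|u_\epsilon(t,x)|^p<\infty$ (valid for $\beta$ small: $u_\epsilon(t,x)$ is equal in law to $u(t/\epsilon^2,0)$ by spatial homogeneity, and the time-uniform moment bounds underpinning Theorem~\ref{thm:stat_field} control $\sup_{s\geq0}\E|u(s,0)|^p$), and the pointwise bound $\epsilon^{-\kappa}|R(\epsilon^{-1}z)|\lesssim|z|^{-\kappa}$ uniformly in $\epsilon$, one reduces by Minkowski's inequality to the deterministic estimate
\begin{equ}
\E|X_t^{\epsilon,g}|^p\lesssim \Bigl(\int_0^t\!\!\int\!\!\int |(P_{t-s}g)(y)|\,|(P_{t-s}g)(y')|\,|y-y'|^{-\kappa}\,dy\,dy'\,ds\Bigr)^{p/2}.
\end{equ}
Taking $g$ to be an $L^1$-normalised bump $\psi_\lambda^x$ at scale $\lambda\in(0,1]$, the function $P_\tau\psi_\lambda^x$ is (up to constants) an $L^1$-normalised bump at scale $(\lambda^2+\tau)^{1/2}$; since $2<\kappa<d$, convolving with the locally integrable Riesz kernel $|\cdot|^{-\kappa}$ and integrating in time gives $\int_0^t(\lambda^2+\tau)^{-\kappa/2}\,d\tau\lesssim\lambda^{2-\kappa}$, whence $\E|X_t^{\epsilon,\psi_\lambda^x}|^p\lesssim\lambda^{(1-\kappa/2)p}$. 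For the time increment one splits $X_t^{\epsilon,g}-X_s^{\epsilon,g}$ into the part $\beta\int_s^t\int(P_{t-r}g)\sigma(u_\epsilon)\,\xi^\epsilon(dr,dy)$ and the part $\beta\int_0^s\int P_{s-r}[(P_{t-s}-\id)g]\,\sigma(u_\epsilon)\,\xi^\epsilon(dr,dy)$; the same BDG reduction together with elementary interpolation of the resulting time integrals (again using $\kappa\in(2,d)$ and the $L^1$-smoothing rate of the heat semigroup) yields $\E|X_t^{\epsilon,\psi_\lambda^x}-X_s^{\epsilon,\psi_\lambda^x}|^p\lesssim|t-s|^{\gamma p}\lambda^{(1-\kappa/2-2\gamma)p}$, uniformly in $\epsilon$, $\lambda$, $x$ and $s,t\in[0,T]$.

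These are exactly the bounds required by the wavelet characterisation of $\C^{\bar\gamma}([0,T],\C^{\bar\alpha})$. Given $\gamma\in(0,\frac12)$ and $\alpha<1-\frac\kappa2-2\gamma$, I would pick $\bar\gamma\in(\gamma,\frac12)$ so close to $\gamma$ that still $\alpha<1-\frac\kappa2-2\bar\gamma$, apply the increment bound at exponent $\bar\gamma$, and take $p=p(\gamma,\alpha,\kappa,d)$ large enough to absorb the usual $1/p$ and $d/p$ losses; this gives $\sup_{\epsilon\in(0,1]}\E\norm{X^\epsilon}_{\C^{\bar\gamma}([0,T],\C^{\bar\alpha})}^p<\infty$ for some $\bar\gamma>\gamma$, $\bar\alpha>\alpha$ (in the relevant weighted Hölder--Besov space). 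The need for this particular $p$ is what forces $\beta<\beta_{\gamma,\alpha}$, since $\sup_\epsilon\E|u_\epsilon|^p<\infty$ only holds for $\beta$ small depending on $p$. Markov's inequality plus the compact embedding $\C^{\bar\gamma}([0,T],\C^{\bar\alpha})\hookrightarrow\C^\gamma([0,T],\C^\alpha)$ then yields tightness; the same bound at fixed $\epsilon$ shows $X^\epsilon$ has a version in $\C^\gamma([0,T],\C^\alpha)$, and a routine Gaussian computation shows the same for $\cU$. Finally, since the maps $Y\mapsto\langle Y_{t_i},g_i\rangle$ are continuous on $\C^\gamma([0,T],\C^\alpha)$ and (for rational times and a countable dense family of test functions) separate points, any subsequential weak limit $\tilde X$ has, by Theorem~\ref{thm:basic-convergence}, the same finite-dimensional pairings in law as $\cU$, hence $\tilde X\eqlaw\cU$; combined with tightness this gives $X^\epsilon\Rightarrow\cU$ weakly in $\C^\gamma([0,T],\C^\alpha)$.

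The main obstacle is the first step, namely the uniform-in-$\epsilon$ moment bounds at the sharp scaling exponents $1-\frac\kappa2$ in space and $(\gamma,\,1-\frac\kappa2-2\gamma)$ in space--time. The delicate points are the correct handling of the singular correlation kernel --- both its regularisation $\epsilon^{-\kappa}R(\epsilon^{-1}\cdot)$ and the limiting kernel $|\cdot|^{-\kappa}$ --- integrated against heat-smoothed bumps at all scales, where the constraint $2<\kappa<d$ enters essentially (local integrability from $\kappa<d$, convergence and correct scaling of the time integral from $\kappa>2$); the interpolation controlling the propagated part $P_{s-r}[(P_{t-s}-\id)\psi_\lambda^x]$ of the time increment; and ensuring that the uniform moment bounds on $u_\epsilon$ hold at the value of $p$ dictated by the Kolmogorov criterion, which is the source of the $\beta$-restriction. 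The weighted-space bookkeeping needed to make the compact embedding and the identification rigorous is routine.
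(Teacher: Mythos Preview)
Your proposal is correct and follows essentially the same route as the paper: finite-dimensional convergence from Theorem~\ref{thm:basic-convergence}, tightness via uniform-in-$\epsilon$ moment bounds on $X^{\epsilon,\psi_\lambda^x}_{r,t}$ obtained through BDG, Minkowski, the pointwise bound $\epsilon^{-\kappa}|R(\epsilon^{-1}\cdot)|\lesssim|\cdot|^{-\kappa}$, and the same two-part split of the time increment, followed by a Kolmogorov-type criterion and the compact embedding $\C^{\bar\gamma}([0,T],\C^{\bar\alpha})\hookrightarrow\C^\gamma([0,T],\C^\alpha)$. The only cosmetic differences are that the paper estimates the resulting deterministic integrals $I_A,I_B$ via explicit heat kernel pointwise bounds $P_s(x)\lesssim s^{-\delta/2}|x|^{\delta-d}$ rather than your ``bump at scale $(\lambda^2+\tau)^{1/2}$'' heuristic, and uses a single-test-function characterisation of $\C^\alpha$ rather than wavelets; both lead to the same estimates and the same $\beta$-restriction through the choice of $p$.
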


\begin{remark}\label{rem:effective_var}
	By a simple reasoning, the law of large numbers $u_\eps \to \E u_\eps = 1$ holds, but this is in fact
	validated a posteriori by Theorem~\ref{thm:weak_conv_1}, see also Lemma~\ref{lem:Du_L2+cov_estimate} below. 
	\end{remark}
\begin{remark}	\label{remark-1.8}
	 Since $v_\eps = \epsilon ^{1-\kappa/2} ( u^\epsilon -1) $ solves the equation
	 $$\partial_t v_\epsilon(t,x)=\ff 12 \Delta v_\epsilon(t,x)+\beta  \sigma(\epsilon^{\kappa/2-1}v_\epsilon(t,x)+1) \xi^\epsilon(t, x), \qquad v_\epsilon(0, x) \equiv 0\;,$$
	one would naïvely expect that in the limit \eqref{e:limitEW},  $\nueff = \sigma(1)$, but this is incorrect due to resonances between the fluctuations of
	$u_\eps$ and $\xi^\eps$.	
	We now give a heuristic argument leading to the correct expression 
	$\nueff^2 = \lim_{t \to \infty} |\E [ \sigma (u(t,0))]|^2$.
	
	For this, we test the rescaled noise term in \eqref{eq:rescaled_eqn_kappa} against a test function $g$, thus considering the expression
	$$\int_{\R^d}  \int_0^t  \sigma(u_\epsilon(s,x))g(x)\; \xi^\epsilon(ds,dx),$$
	whose covariance is given by 
	
	\begin{equ}[e:covarSimple]
		\int_{\R^{2d}}  \int_0^t C(\eps^{-2}s, \eps^{-1}(x-y)) \,ds\, \epsilon^{-\kappa} R(\ff {x-y} \epsilon) g(x) g(y)\, dx\,dy\;,
	\end{equ}
	where we set
	\begin{equ}
		C(s,x) = \E [ \sigma(u(s,0)) \sigma(u(s,x))]\;.
	\end{equ}
	As in \cite{MSZ16_weak_strong_mshe}, we expect $u$ to converge in law to some stationary process $Z$ as $t\to \infty$, 
	so we expect $C(s,\cdot)$ to
	converge as $s\to \infty$ to some bounded function $\bar C: \R^d\to \R$ which, assuming that $Z$ is sufficiently mixing so the spatial correlation vanishes, furthermore has the asymptotic
	$\lim_{|x| \to \infty} \bar C(x) = |\E \sigma(Z)|^2$.

	In case $R$ is integrable so is $R \bar C$ and we expect \eqref{e:covarSimple}
	to converge (provided $\kappa = d$) to $\tilde \nu^2 \|g\|_{L^2}^2 t$ with
	$$\tilde \nu^2 = \int_{\R^d}\bar C(y) R(y)\, dy\;,$$
	thus leading to the limit equation $\d_t \cU=\ff 12 \Delta  \cU +\beta \tilde \nu  \xi$ where $\xi$ 
	is standard space time white noise.
	For $R$ with compact support, this heuristic agrees with the result  in \cite{MSZ16_weak_strong_mshe,gu18edwards}, see also \cite{cosco2020law}. 
	
	In our case $R$ is not integrable,  $\tilde \nu^2 =\infty$ for the SHE, but we have instead the large scale structure $\epsilon^{-\kappa} R(\frac{x-y}{\epsilon}) \approx |x-y|^{-\kappa}$. Since this is integrable near $0$, we expect \eqref{e:covarSimple} to behave like 
	\begin{equ}
		t \int_{\R^{2d}}  \bar C(\eps^{-1}(x-y)) \frac{ g(x) g(y)}{|x-y|^{\kappa}}\, dx\,dy
		\approx t |\E \sigma(Z)|^2 \int_{\R^{2d}}  \frac{ g(x) g(y)}{|x-y|^{\kappa}}\, dx\,dy\;,
	\end{equ}
	which is precisely what is suggested by Theorem~\ref{thm:weak_conv_1}.
\end{remark}

\begin{remark}
	The isotropic convergence assumption \eqref{ass:R_conv} can be relaxed to 
	\begin{equation*}
	\epsilon^{-\kappa} R( \epsilon^{-1}x ) \to \theta\big(\tfrac x {|x|}\big) |x|^{-\kappa} \quad \text{as} \quad \epsilon \to 0,
	\end{equation*}
	for some positive continuous function $\theta: \partial B(0,1) \to \R_+$. This results in an Edwards--Wilkinson limit driven by Gaussian white noise with covariance
	\begin{equation*}
	\E [\, \dot W^\kappa(t,x) \dot W^\kappa(s,y)\,]= \delta(t-s) \,\theta\big(\tfrac {x-y} {|x-y|}\big) |x-y|^{-\kappa},
	\end{equation*}
	in place of \eqref{eq:EW_noise_covariance_defn}.	
\end{remark}

\subsection{Proof and organisation of the paper}
Our basic strategy is to apply Stein's method, Proposition~\ref{prop:multi_Stein}, as in \cite{huang18-2020Stein_CLT_mshe}. For this, let $Z_i :=  \int_{\R^{d}} \cU (t_i, x) g_i(x) dx$, for a collection of test functions $g_i$ and times $t_i$ and $\cU$ the solution to the stochastic heat equation (\ref{e:limitEW}). Let $ \Sigma$ denote the covariance matrix of $\{Z_i\}$, and write $X^{\epsilon, g}$ as the divergence of $v^{\epsilon,g}$ through the mild formulation of the equation. It is sufficient to show that the quantity,
$\langle DX^{\epsilon, g_i}_{t_i}, v^{\epsilon,g_j}_{t_j} \rangle_{\cH} - \Sigma_{i,j} $, converges to zero in $L^2(\Omega)$.  We decompose this quantity into the sum  of the difference of the covariances between $X_{t_i}^{\epsilon, g_i}$ and $Z_j$ \dash which converges to zero by direct computation \dash and a
remainder term. The main technical hurdle is to  control the $L^2(\Omega)$ norm of this residual term,
which we bound by a multiple integral involving singular kernels. The latter are then shown to be bounded with an adaption of the general framework of \cite{hairer2016analyst_BPHZ}, see Lemma~\ref{lem:diagram_integration}.

The proof of Theorem~\ref{thm:weak_conv_1} is achieved by demonstrating the tightness of $X^\epsilon$ in $\C^\gamma ([0,T], \C^\alpha)$ for suitable $\gamma<\frac 1 2$ and $\alpha <-1$. This is shown by formulating a Kolmogorov type criterion (Proposition~\ref{prop:time-holder_distribution_process-app}) and leveraging uniform moment bounds. We review the spaces $\C^\alpha\subset \S^\prime$ in the beginning of Section~\ref{sec:weak_convergence}.

\medskip\noindent\emph{Organisation}. In Section~\ref{sec:prelims} we collect some basic Gaussian\slash covariance estimates, the key Feynman diagrams integration Lemma~\ref{lem:diagram_integration}, moment bounds, and decorrelation estimates on $u$. Section~\ref{sec:proof-thm1} is devoted to the proof of Theorem~\ref{thm:basic-convergence}, whereas the proof of tightness and Theorem~\ref{thm:weak_conv_1} are in Section~\ref{sec:weak_convergence}.

\subsubsection*{Notation and conventions}

\begin{enumerate}
\item  We assume $(\Omega, \F, \mathbb{P})$ is a complete probability space supporting the Gaussian noise $\xi$ and we denote by $\| \cdot \|_p$ the $L^p(\Omega)$ norm.
\item We write $\hat f(z) = \int_{\R^d} f(x) e^{-i z \cdot x}dx$ for the Fourier transform of $f$. 
\item Whenever $a\lesssim b$, we mean $a \leq C b$ for some constant $C>0$ independent of $\epsilon$.
\item For $a, b \in \R$, we denote their minimum with $a \wedge b = \min (a, b)$. 
\item Throughout the following sections (see Lemma \ref{lem:moments} and onwards) the coupling threshold $\beta_0$ and implicit inequalities constants $C$ will depend on $d, R$ and $\sigma$, we avoid keeping track of this to lighten notation.
\item We denote with $|\sigma|_{\Lip}$ the Lipschitz constant of the nonlinearity $\sigma$.
\item For $E\subset \R^d$ open, we write $\Ccinf(E)$ for the set of  smooth functions on $E$ with compact support, and we set $\D = \Ccinf(\R^d)$. 
\item For $r \in [0,1)$,
we denote by $\C^{r} = \C^{r}(\R^d, \R)$ the space of little-Hölder continuous functions, 
namely the $r$-Hölder functions $f$ such that $\lim_{y \to x} |y-x|^{-r}|f(y) - f(x)| = 0$ 
locally uniformly in $x$. This is
the completion of smooth functions under the system of seminorms
\begin{equ}
	\| f \|_{E, \C^r}  := \sup_{
		\substack{x, y \in E \\x\neq y }
	} \frac{|f(x) - f(y)|}{|x-y|^ r}\;,
\end{equ}
where $E \subset \R^d$ is open and bounded. Note that this space is separable, unlike the space
of all $r$-Hölder continuous functions.

We also consider the closure of smooth compactly supported function under weighted H\"older norms, e.g.\ take 
$w(x)=(1+|x |^2)^{-\f {p} 2}$, and $\|f\|_{C^{\alpha, w}}=\|wf\|_{C^\alpha}$. In this case the standard 
embedding theorem holds.
For $r \ge 1$, the space $\C^r$ is defined recursively as the space of continuously differentiable 
functions such that each directional derivative is in $\C^{r-1}$. We denote by $\C^r_c$ the subset 
of little-H\"{o}lder continuous compactly supported functions.
\end{enumerate}	

\section{Preliminary estimates}\label{sec:prelims}
Recall that the product on the right-hand side of the multiplicative stochastic heat equation \eqref{eq:basic_mSHE} is interpreted as an It\^o integral.
Given the flat initial condition $u(0,x) \equiv 1$, its (mild) solution is given by the solution to the fixed point problem
\begin{equ} \label{eq:mild_soln}
	u(t,x) = 1 + \beta \int_{0}^{t} \int_{ \R^d} P_{t-s}(x-y) \sigma(u(s,y)) \xi(ds, dy), 
\end{equ}
where $P_{t}(x) = (2\pi t)^{-d/2} e^{-\frac{|x|^2}{2t}}$ denotes the $d$-dimensional heat kernel. 
The  well-posedness theory for \eqref{eq:mild_soln} 
follows from classical theories \cite{walsh1986introduction, daPrato_Inf}.

From Assumption~\ref{assump-noise}, since $\hat R >0$ and, by Plancherel's identity,
\begin{equ}
	\int_{\R^d} \hat{R}(z) |z|^{-2} dz = c \int_{\R^d} R(y) |y|^{-d+2} dy \lesssim \int_{\R^d} (1+ |y|)^{-\kappa} |y|^{-d+2} dy < \infty\;,
\end{equ}
the spectral measure $\hat{R}$ satisfies ``Dalang's condition''
$	\int_{\R^d} \frac{\hat{R}(z)}{1+|z|^2} dz <\infty$.
Together with the condition that $\sigma$ is Lipschitz continuous and $R$ is bounded, this implies that the solution $u$ is function valued, \cite[Thm~13, Remark~14]{dalang1999extending}. See also \cite[Thm~0.2]{peszat2000nonlinear}.

\subsection{Elementary Gaussian and covariance estimates}

In this subsection we gather the main estimates involving the Gaussian kernel and covariance $R$ used below. First, we have the following pointwise estimates.

\begin{lemma}\label{lem:gaussian_time_bounds}
	Given any $\lambda \in (0,  d)$, the following bounds hold uniformly in $s >0$, and  $x \in \R^d\setminus\{0\}$:
	\begin{equ}
		P_{s}(x)  \lesssim s^{-
			\frac \lambda 2} |x|^{\lambda - d},  \qquad
		|\d_s P_{s}(x)|  \lesssim  s^{-\frac \lambda 2-1} |x|^{\lambda - d}.        \label{eq:comp_est}
	\end{equ}
	Furthermore, for every $\gamma \in [0,1]$ and $t>s$, one has
	\begin{equ}[e:boundDiffHeatTime]
	|P_t(x) - P_s(x)| \lesssim (t-s)^{\gamma} s^{-\gamma -\f \lambda  2}|x|^{\lambda-d}\;.
	\end{equ}
\end{lemma}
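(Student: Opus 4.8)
The plan is to reduce all three bounds to the elementary fact that $\sup_{a>0} a^m e^{-a} < \infty$ for every $m>0$, supplemented by an integral representation and an interpolation argument for the time-increment estimate. Throughout, the hypothesis $\lambda \in (0,d)$ is exactly what guarantees that the relevant powers of $a$ appearing below are positive.

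First, for the pointwise bound on $P_s(x)$, I would write $P_s(x) = (2\pi)^{-d/2} s^{-d/2} e^{-|x|^2/(2s)}$ and split $s^{-d/2} = s^{-\lambda/2}\, s^{-(d-\lambda)/2}$. Setting $a = |x|^2/(2s) > 0$, one has $s^{-(d-\lambda)/2} e^{-|x|^2/(2s)} = c_{d,\lambda}\, |x|^{\lambda-d}\, a^{(d-\lambda)/2} e^{-a}$, and since $(d-\lambda)/2 > 0$ the factor $a^{(d-\lambda)/2} e^{-a}$ is bounded uniformly in $a$. This yields the first inequality in \eqref{eq:comp_est}. For the derivative bound, differentiating the explicit formula gives $\partial_s P_s(x) = P_s(x)\bigl(\tfrac{|x|^2}{2s^2} - \tfrac{d}{2s}\bigr)$, hence $|\partial_s P_s(x)| \le P_s(x)\bigl(\tfrac{|x|^2}{2s^2} + \tfrac{d}{2s}\bigr)$. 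The term $\tfrac{d}{2s} P_s(x)$ is controlled by $\tfrac{d}{2s}$ times the bound just proved, giving $\lesssim s^{-\lambda/2-1}|x|^{\lambda-d}$. For the term $\tfrac{|x|^2}{2s^2} P_s(x)$ I would again absorb the polynomial factor into the Gaussian: $P_s(x)\tfrac{|x|^2}{s} = c_d\, s^{-\lambda/2}|x|^{\lambda-d}\, a^{(d-\lambda)/2+1} e^{-a}$ with $a = |x|^2/(2s)$, and $(d-\lambda)/2+1 > 0$, so this factor is bounded; dividing by $2s$ gives $\lesssim s^{-\lambda/2-1}|x|^{\lambda-d}$, completing the second inequality in \eqref{eq:comp_est}.

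For \eqref{e:boundDiffHeatTime}, I would treat the endpoints $\gamma = 0$ and $\gamma = 1$ and then interpolate. The case $\gamma = 0$ follows from $|P_t(x) - P_s(x)| \le P_t(x) + P_s(x)$ together with the first bound in \eqref{eq:comp_est}, using $t > s$ to replace $t^{-\lambda/2}$ by $s^{-\lambda/2}$. The case $\gamma = 1$ follows by writing $P_t(x) - P_s(x) = \int_s^t \partial_r P_r(x)\, dr$, applying the derivative bound, and integrating, using $r \ge s$ to pull the factor $s^{-\lambda/2-1}|x|^{\lambda-d}$ outside the integral, so that the integral of $1$ over $[s,t]$ produces the factor $(t-s)$. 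For general $\gamma \in (0,1)$ I would write $|P_t(x)-P_s(x)| = |P_t(x)-P_s(x)|^\gamma\,|P_t(x)-P_s(x)|^{1-\gamma}$ and estimate the first factor by the $\gamma=1$ bound raised to the power $\gamma$ and the second by the $\gamma=0$ bound raised to the power $1-\gamma$; collecting the powers of $s$ one finds $-\gamma(\tfrac\lambda2+1) - (1-\gamma)\tfrac\lambda2 = -\gamma - \tfrac\lambda2$, while the powers of $|x|$ and of $(t-s)$ combine to $\lambda - d$ and $\gamma$ respectively, giving exactly \eqref{e:boundDiffHeatTime}.

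There is no genuine obstacle in this lemma; it is a routine scaling computation. The only points requiring a little care are to keep track of the constraint $\lambda \in (0,d)$ (which is what makes every exponent of $a$ strictly positive, so that the corresponding $a^m e^{-a}$ is bounded), and to do the exponent bookkeeping in the interpolation step correctly.
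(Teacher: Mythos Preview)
Your proposal is correct and follows essentially the same approach as the paper. The only organisational difference is in \eqref{e:boundDiffHeatTime}: the paper integrates the derivative bound more carefully to obtain $\int_s^t r^{-1-\lambda/2}\,dr \lesssim s^{-\lambda/2-1}(s\wedge|t-s|)$ and then observes $s\wedge|t-s|\le s^{1-\gamma}(t-s)^\gamma$, whereas you interpolate between the $\gamma=0$ (triangle inequality) and $\gamma=1$ (integrated derivative with the crude bound $r\ge s$) endpoints; both routes are equivalent and equally short.
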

\begin{proof}
For the first bound, it suffices to recall that, by scale invariance of the heat kernel, $|\d_t^k P_t(x)| \lesssim (\sqrt{|t|} + |x|)^{-d-2k}$.
For the second bound, it follows from \eqref{eq:comp_est} that 
\begin{equ}
|P_t(x)-P_s(x)| \le \int_s^t |\d_r P_{r}(x)|\,dr
\lesssim |x|^{\lambda-d} \int_s^t r^{-1-\f\lambda2}\,dr
\lesssim |x|^{\lambda-d} s^{-\f\lambda 2-1} \bigl(s \wedge |t-s| \bigr)\;,
\end{equ}
and \eqref{e:boundDiffHeatTime} follows at once.
\end{proof}

\begin{lemma}\label{kernel-R}
	If $\kappa \in (0, d)$, then for any $r > 0$, any $x \in \R^d$, 
	\begin{equs}
		\int_{\R^d} \f{P_{r} (z-x)} {|z|^\kappa}dz \lesssim (|x|+\sqrt r)^{-\kappa},  \qquad \int_{\R^d} \f{P_{r} (z-x)} {1+|z|^\kappa}dz \lesssim (1+|x|+\sqrt r)^{-\kappa}.
	\end{equs}
\end{lemma}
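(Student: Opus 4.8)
The plan is to prove both bounds by the same elementary device: split the spatial integral according to whether $z$ is near the ``centre of mass'' of the heat kernel (i.e.\ near $x$) or far from it, and in each region estimate one of the two factors by its supremum and the other by its integral.

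\emph{First bound.} Write $a = |x| + \sqrt r$ and split $\R^d = A \cup A^c$ where $A = \{z : |z| \le a/2\}$. On $A^c$ we have $|z|^{-\kappa} \lesssim a^{-\kappa}$, so that
\begin{equ}
  \int_{A^c} \f{P_r(z-x)}{|z|^\kappa}\,dz \lesssim a^{-\kappa}\int_{\R^d} P_r(z-x)\,dz = a^{-\kappa}\;,
\end{equ}
using that $P_r$ integrates to one. On $A$, since $|z| \le a/2$ and $|x| \le a$, the triangle inequality gives $|z - x| \ge |x| - |z| \ge a/2 \ge \sqrt r/2$ and also $|z-x| \ge \sqrt r /2$ trivially; in particular $\sqrt r + |z-x| \gtrsim a$, so $P_r(z-x) \lesssim (\sqrt r + |z-x|)^{-d} \lesssim a^{-d}$ by the Gaussian bound from Lemma~\ref{lem:gaussian_time_bounds} (with $\lambda = d$, or directly from $|P_t(x)| \lesssim (\sqrt{|t|}+|x|)^{-d}$). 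Hence
\begin{equ}
  \int_{A} \f{P_r(z-x)}{|z|^\kappa}\,dz \lesssim a^{-d}\int_{|z|\le a/2} |z|^{-\kappa}\,dz \lesssim a^{-d}\, a^{d-\kappa} = a^{-\kappa}\;,
\end{equ}
where the middle integral converges precisely because $\kappa < d$. Summing the two contributions gives the first estimate.

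\emph{Second bound.} Set $b = 1 + |x| + \sqrt r$. One can either repeat the argument with $|z|^{-\kappa}$ replaced by $(1+|z|)^{-\kappa}$ (splitting at $|z| \le b/2$), or simply observe that $1 + |z|^\kappa \gtrsim (1+|z|)^\kappa$ for $\kappa \le 1$ and more care is needed for $\kappa > 1$; the cleanest route is to note $1+|z|^\kappa \gtrsim 1$ everywhere and $1+|z|^\kappa \gtrsim |z|^\kappa$, so that $(1+|z|^\kappa)^{-1} \lesssim (1 \wedge |z|^{-\kappa}) \lesssim (1+|z|)^{-\kappa}$ up to a dimensional constant, and then apply the region decomposition as above with $a$ replaced by $b$, using on the far region $(1+|z|)^{-\kappa}\lesssim b^{-\kappa}$ and on the near region the bound $P_r(z-x) \lesssim b^{-d}$ together with $\int_{|z|\le b/2}(1+|z|)^{-\kappa}\,dz \lesssim b^{d-\kappa}$ (again using $\kappa < d$). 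This yields the second estimate.

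I do not expect any genuine obstacle here: the argument is a standard ``peak versus tail'' decomposition, and the only points requiring a moment's attention are (i) checking that on the near region $|z-x|$ is comparable to $a$ (resp.\ $b$) so that the Gaussian factor is uniformly small, which follows from the triangle inequality since $|x|$ dominates $|z|$ there, and (ii) the convergence of $\int_{|z|\le a/2}|z|^{-\kappa}\,dz$, which is exactly where the hypothesis $\kappa < d$ is used. The constants are manifestly independent of $r$ and $x$ since everything is expressed through the scale-invariant quantity $a = |x|+\sqrt r$ (resp.\ $b$).
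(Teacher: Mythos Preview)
Your approach to the first bound via the peak/tail decomposition is correct and more elementary than the paper's, which instead performs the scaling change of variables $z \mapsto \sqrt r\, z$ and then invokes an abstract convolution estimate (\cite[Lem.~6.8]{HQ}) on the bilinear spaces $\B_{\alpha,\beta}$. Your route is self-contained; the paper's buys a more systematic treatment that would generalise to other pairs of kernels.

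That said, your justification that $\sqrt r + |z-x| \gtrsim a$ on the near region $A$ is garbled: the intermediate claim ``$|x| - |z| \ge a/2$'' fails whenever $|x|$ is small (take $x=0$, $r=1$), and ``$|z-x| \ge \sqrt r/2$ trivially'' is simply false (take $z=x$). The conclusion is nonetheless correct and the fix is easy: if $\sqrt r \ge a/2$ you are done, and otherwise $|x| = a - \sqrt r > a/2 \ge |z|$, whence $\sqrt r + |z-x| \ge \sqrt r + |x| - |z| = a - |z| \ge a/2$.

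Your argument for the second bound has a more substantive error. The claim $P_r(z-x) \lesssim b^{-d}$ on $\{|z| \le b/2\}$ with $b = 1 + |x| + \sqrt r$ is false: take $x = 0$ and $r$ small, so that $b \approx 1$; then for $z$ near $0$ one has $P_r(z-x) \approx r^{-d/2}$, which is unbounded as $r \to 0$. The decomposition at scale $b$ therefore does not go through. The repair is exactly the paper's argument: since $(1+|z|^\kappa)^{-1} \le 1$ the whole integral is at most $\int P_r = 1$, while $(1+|z|^\kappa)^{-1} \le |z|^{-\kappa}$ combined with your first bound gives $\lesssim (|x|+\sqrt r)^{-\kappa}$; hence the integral is $\lesssim 1 \wedge (|x|+\sqrt r)^{-\kappa} \lesssim (1+|x|+\sqrt r)^{-\kappa}$.
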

\begin{proof}
It is sufficient to show the first inequality, the second follows from the fact that $ \int_{\R^d} {P_{r} (z-x)} {(1+|z|^\kappa)^{-1}}dz\le 1$ and a bound of the order $\min(1,  (|x|+\sqrt r)^{-\kappa})$ is equivalent to a bound $ C(1+|x|+\sqrt r)^{-\kappa}$.

By a change of variable,
$$\int_{\R^d} \f{P_{r} (z-x)} {|z|^\kappa}dz=r^{-\f \kappa 2} \int \f{P_{1} (z-\f x {\sqrt r})} {|z|^\kappa}dz.$$
As in \cite[Sec.~6.3]{HQ}, we now define for $\alpha,\beta \ge 0$ the spaces $\B_{\alpha,\beta}$ of functions
$F$ such that
\begin{equ}
\|F\|_{\alpha,\beta} \eqdef \sup_{|x| \le 1}|x|^\alpha |F(x)| + \sup_{|x| \ge 1}|x|^\beta |F(x)| < \infty \;.
\end{equ}
Note that one has $P_1 \in \B_{0,d}$ and $z\mapsto |z|^{-\kappa} \in \B_{\kappa,\kappa}$.
It is shown in \cite[Lem.~6.8]{HQ} that, provided that $\alpha_i < d$ and $\beta_1 + \beta_2 > d$, one has
\begin{equ}
\|F_1 \star F_2\|_{\alpha,\beta} \lesssim \|F_1\|_{\alpha_1,\beta_1}\|F_2\|_{\alpha_2,\beta_2}\;,
\end{equ}
with $\alpha = 0 \vee (\alpha_1+\alpha_2-d)$ and $\beta = (\beta_1 + \beta_2-d)\wedge \beta_1\wedge \beta_2$.
(Note that the statement given there is on $\R^{d+1}$ with parabolic scaling. The exact same proof applies to the
Euclidean scaling.)
In particular, it shows that the convolution of the two functions of interest to us belongs to 
$\B_{0,\kappa}$, whence the bound follows.
\end{proof}
We will also  frequently use the following fact: for any numbers $\alpha, \beta > -d$ such that $\alpha + \beta + d < 0$, one has
\begin{equ}\label{eq:homog_conv}
	\int_{\R^d} |z|^\alpha |x -z|^\beta dz = c_{\alpha, \beta}|x|^{\alpha+\beta+d}\;,
\end{equ}
for some finite constant $c_{\alpha,\beta}$. (Simply perform the change of variables $z = |x| y$.)

We gather some often used estimate about $R$ in the following lemma. Given $\kappa \in (2, d)$, the kernel $| \cdot|^{-\kappa}$ is integrable near $0$ so that the following estimates hold.

\begin{lemma}\label{lem:gaussian_cov_est}
	Let $R:\R^d\to \R^d$ be a Borel measurable function such that $|R(x)| \lesssim \frac{1}{1 + |x|^\kappa}$. If $\kappa \in (2, d)$, there exists $C_R > 0$ such that for any $r, s > 0$, any $x \in \R^d$, we have the bounds
	\begin{equs} \label{eq:time_decay_est}
		\int_{\R^{2d}} P_{r}(z_1-x) P_s(z_2)\big| R(z_1 -z_2)\big| dz_1 dz_2  &\leq C_R (1+|x|+\sqrt{r+s})^{-\kappa}, \\ 		\int_0^\infty \int_{\R^{2d}} P_{r}(z_1-x) P_r(z_2) \;\big|R(z_1 -z_2)\big| dz_1 dz_2\; dr
		& \leq C_R ( 1 \wedge |x|^{2-\kappa} ) \label{eq:control_space}\\		
		\int_{\R^d} \frac {|R(z)|} {|z-x|^{d-2}} dz  &\leq C_R(1 \wedge |x|^{2-\kappa})\;.\label{eq:control_R}
	\end{equs}
\end{lemma}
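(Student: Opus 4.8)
The plan is to prove the three bounds in order, each time exploiting that $\kappa \in (2,d)$ so $|\cdot|^{-\kappa}$ is integrable near the origin while still decaying at infinity, and reducing everything to the semigroup property of the heat kernel together with Lemma~\ref{kernel-R} and the homogeneous convolution identity \eqref{eq:homog_conv}.

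\textbf{The first bound \eqref{eq:time_decay_est}.} I would start by integrating out the Gaussian variables. Since $|R(w)| \lesssim (1+|w|)^{-\kappa} \lesssim 1 \wedge |w|^{-\kappa}$, it suffices to control $\int_{\R^{2d}} P_r(z_1-x) P_s(z_2)\,(1\wedge|z_1-z_2|^{-\kappa})\, dz_1\, dz_2$. Substituting $w = z_1 - z_2$ and using the semigroup property $\int P_r(z_1-x)P_s(z_2)\,dz_2 = \int P_r(z_1 - x) P_s(z_1 - w)\,dz_1$... more cleanly: the law of $z_1 - z_2$ where $z_1 \sim \mathcal N(x, rI)$ and $z_2 \sim \mathcal N(0,sI)$ independent is $\mathcal N(x, (r+s)I)$, i.e.\ $\int_{\R^{2d}} P_r(z_1-x)P_s(z_2) f(z_1-z_2)\,dz_1\,dz_2 = \int_{\R^d} P_{r+s}(w-x) f(w)\,dw$. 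Applying this with $f(w) = 1 \wedge |w|^{-\kappa}$ and then invoking Lemma~\ref{kernel-R} (second inequality, with $r$ there equal to $r+s$ here) gives exactly $\lesssim (1+|x|+\sqrt{r+s})^{-\kappa}$.

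\textbf{The second bound \eqref{eq:control_space}.} Here $x$ is fixed and we integrate over $r \in (0,\infty)$ after collapsing the Gaussians as above: the integral equals $\int_0^\infty \int_{\R^d} P_{2r}(w-x)\,|R(w)|\, dw\, dr$. Split according to $|w| \leq 1$ and $|w| > 1$; on the latter region $|R(w)| \lesssim |w|^{-\kappa}$, on the former $|R(w)| \lesssim 1$. For the time integral I would use $\int_0^\infty P_{2r}(y)\, dr = c_d |y|^{2-d}$ (valid since $d \geq 3$), so the whole thing becomes $\lesssim \int_{\R^d} |w-x|^{2-d}\,(1\wedge|w|^{-\kappa})\, dw$, which is precisely the quantity in \eqref{eq:control_R}; thus \eqref{eq:control_space} follows from \eqref{eq:control_R}, and I would simply prove \eqref{eq:control_R} first and deduce \eqref{eq:control_space} from it.

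\textbf{The third bound \eqref{eq:control_R}.} Write $I(x) = \int_{\R^d} |R(z)|\,|z-x|^{2-d}\, dz$ and split the $z$-integral into $|z| \leq 1$ and $|z| > 1$. For $|z|>1$, use $|R(z)| \lesssim |z|^{-\kappa}$ and apply \eqref{eq:homog_conv} with $\alpha = -\kappa$, $\beta = 2-d$ (checking $\alpha,\beta > -d$ and $\alpha+\beta+d = 2-\kappa < 0$), which contributes $\lesssim |x|^{2-\kappa}$ plus a bounded error from the region $|z| \leq 1$ being excluded. For $|z| \leq 1$, the factor $|R(z)|$ is bounded, so the contribution is $\lesssim \int_{|z|\leq 1} |z-x|^{2-d}\, dz$, which is uniformly bounded in $x$ (it is continuous and decays like $|x|^{2-d}$, hence like $|x|^{2-\kappa}$ is worse for large $|x|$ but a crude bound suffices) — and for large $|x|$ this piece is $\lesssim |x|^{2-d} \leq |x|^{2-\kappa}$. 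Combining: $I(x) \lesssim 1$ always, and $I(x) \lesssim |x|^{2-\kappa}$ for $|x| \geq 1$, which together give $I(x) \lesssim 1 \wedge |x|^{2-\kappa}$.

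The only mildly delicate point, and what I would treat as the main obstacle, is the bookkeeping for \eqref{eq:control_R} near $|x| \sim 1$ and the matching of the two regimes (small vs.\ large $|x|$): one must verify that the near-origin singularity $|z-x|^{2-d}$ is harmless (integrable since $d-2 < d$) and that the tail estimate from \eqref{eq:homog_conv}, which is stated for the pure homogeneous kernel, survives the truncation to $|z|>1$ with only a bounded correction. All of this is routine once the Gaussian variables are collapsed via the semigroup property and the time integral $\int_0^\infty P_{2r}\,dr$ is evaluated; no new ideas beyond Lemmas~\ref{kernel-R} and the identity \eqref{eq:homog_conv} are needed.
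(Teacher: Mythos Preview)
Your proof is correct and follows essentially the same approach as the paper: collapse the two Gaussians via the semigroup property and apply Lemma~\ref{kernel-R} for \eqref{eq:time_decay_est}, then use \eqref{eq:homog_conv} together with a uniform bound for small $|x|$ to get \eqref{eq:control_R}. The only cosmetic difference is that the paper obtains \eqref{eq:control_space} by integrating the right-hand side of \eqref{eq:time_decay_est} in $r$ (using $\int_0^\infty (1+|x|+\sqrt{2r})^{-\kappa}\,dr \lesssim 1\wedge|x|^{2-\kappa}$ since $\kappa>2$), whereas you first evaluate $\int_0^\infty P_{2r}\,dr = c_d|\cdot|^{2-d}$ and then reduce to \eqref{eq:control_R}; both routes are equally short.
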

\begin{proof}
	Through a change of variable and convolution of Gaussian kernels,
	\begin{equs}
		{} &\int_{\R^{2d}} P_{r}(z_1-x) P_s(z_2) \big|R(z_1 -z_2)\big| dz_1 dz_2  =   \int_{\R^d} P_{r+s} (z-x) \big|R(z)\big|dz\\
		& \lesssim   \int_{\R^d} \f{P_{r+s} (z-x) }{1+|z|^k} dz \lesssim  (1+|x|+\sqrt {r+s})^{-\kappa}\;,
	\end{equs}
	where we used Lemma~\ref{kernel-R} in the last inequality. The bound \eqref{eq:control_space} follows
	by integrating over $r$.
	The last bound follows from \eqref{eq:homog_conv}, noting that the integral is also bounded, 
	uniformly over $|x| \le 1$.
\end{proof}

In the case of $R$ bounded and $L^1$-integrable (e.g.\ $\kappa >d$ or compactly supported $R$) the estimates \eqref{eq:time_decay_est}-\eqref{eq:control_space} hold with $\kappa$ replaced by $d$. In the following subsection we introduce a framework to handle systematically integration of multiple kernels and their associated small \slash large scale behaviours. 

\subsection{Feynman diagrams and integration lemma}\label{sec:Feynman}

Throughout this section, we fix a family $\{K_\tfrk: \tfrk \in  \fL\}$,
where $\mathfrak L$ denotes some finite index set and each $K_\tfrk$ is a function
$\R^d \to \R$ (also sometimes referred to as a ``kernel''). All our kernels are assumed
to be smooth, except at the origin where they are allowed to have a singularity.

Consider now a directed graph $\Gamma = (\cV,\cE)$ where
$\cV$ is a finite index set (interpreted as the vertices of $\Gamma$) and
$\cE \subset \cV \times \cV$ (with elements denoted 
by $e = (e_-,e_+)$). Given $\tfrk\colon \cE \to \fL$,
it will be convenient to have bounds on general integrals of the form 
\begin{equ} \label{eq:I_Gamma}
	I_\Gamma(\varphi) = \int_{(\R^{d})^{\cV}}\Bigl( \prod_{e \in \cE} K_{\tfrk(e)} (x_{e_+} - x_{e_-}) \Bigr) \varphi(x_{\cV_\ell})\, dx\;.
\end{equ}
Here, $\cV_\ell \subset \cV$ is some fixed subset of vertices (sometimes called ``legged vertices'', see
\cite{hairer2016analyst_BPHZ} for more details on terminology in a slightly more general setting),
 $x_{\cV_\ell} \in (\R^{d})^{\cV_\ell}$ denotes the restriction of $x \in (\R^{d})^{\cV}$ to $\cV_\ell$,
  and  $\varphi \in \C_c((\R^{d})^{\cV_\ell})$ is a test function.
We also sometimes write $\cV_{\ii} = \cV \setminus \cV_\ell$ for the ``interior vertices''.
We also think of $I_\Gamma$ as a distribution on $(\R^{d})^{\cV_\ell}$, in which case
integration takes place over the interior vertices and the legged vertices are kept fixed.
A triple $(\cV,\cE,\tfrk)$ is also called a ``Feynman diagram''.
In our applications, we indicate edge labels by colour \slash degree in the diagram and we ignore 
the orientation of edges corresponding to symmetric kernels.

In order to bound \eqref{eq:I_Gamma}, we need some quantitative control on the kernels $K_\tfrk$.
To this end, we assume that there are ``degree'' maps 
$$\deg_0 \colon {\mathfrak L} \to (-\infty,0), \qquad  
\deg_\infty \colon  {\mathfrak L} \to(-\infty,0),$$
capturing the small \slash large scale behaviour of the corresponding kernel:
\begin{equ} \label{eq:deg_ineq}
	| K_\tfrk (x)| \, \1_{|x|\leq 1}  \leq c_1  |x|^{\deg_0  \tfrk}, \qquad 
	| K_\tfrk (x)| \, \1_{|x| > 1}  \leq c_2  |x|^{\deg_\infty  \tfrk}. 
\end{equ}

We will also need the following two notions in order to formulate the assumptions guaranteeing
integrability of \eqref{eq:I_Gamma} at small \slash large scales respectively.
\begin{enumerate}
\item A \emph{subgraph} $\bar{\Gamma} \subset \Gamma$ consists of a subset of edges $\bar{\cE} \subset \cE$, as well as the subset of 
vertices $\bar{\cV} $  incident to at least one edge in $\bar{\cE}$.
Note that a subgraph does not need to contain all the edges in $\Gamma$ that are connected vertices in $\bar{\cV}$.
\item Let $\P_\Gamma = \{A_i\}$ be a partition of $\cV$.  An edge $(e_-, e_+)$ is said to be a ``traversing'' edge, if the two vertices belong to different elements of the partition. 
A partition  $\P_\Gamma$ with at least two elements is said to be \emph{tight} if there exists $A \in \P_\Gamma$ with  $\cV_\ell \subset A$.
\end{enumerate}

Combining \cite[Prop.~2.3 \& 4.1]{hairer2016analyst_BPHZ}, see also \cite{weinberg1960} for the
a formulation for the small-scale behaviour, we have the following lemma:
\begin{lemma}\label{lem:diagram_integration}
Let $\Gamma$ be a Feynman diagram with at least one edge.	Suppose that the following small and large scale conditions are satisfied.
	\begin{enumerate}
	\item For every non-empty subgraph $\bar \Gamma=(\bar \cV, \bar \cE)$,
	\begin{equ}\label{eq:small_scales_crit}
		\deg_0 \,\bar \Gamma := \sum_{e \in \bar \cE} \deg_0 \tfrk(e) + d ( |\bar \cV| - 1) \,\, > \,\,0. 
	\end{equ}
	\item For every tight partition $\P_\Gamma$, denoting $ \cE(\P_\Gamma)$ its set of traversing edges, 
	\begin{equ}\label{eq:large_scales_crit}
		\deg_\infty \, \P_\Gamma := \sum_{e \in \cE(\P_\Gamma)}  \deg_\infty \tfrk(e) + d ( |\P_\Gamma| - 1) \,\,< \,\, 0.
	\end{equ}
	\end{enumerate}	Then  $I_{\Gamma} (\varphi)<\infty$  for every $\varphi \in \Cc((\R^d)^{\cV_\ell})$.
\end{lemma}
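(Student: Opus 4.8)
The plan is to reduce everything to the two results cited from \cite{hairer2016analyst_BPHZ}, namely the power-counting criteria for integrability at small and large scales, and to verify carefully that the hypotheses of those results are implied by \eqref{eq:small_scales_crit} and \eqref{eq:large_scales_crit}. The only real work is bookkeeping: translating between the Euclidean-scaling setup we use here and the (parabolic, $\R^{d+1}$) setup of \cite{hairer2016analyst_BPHZ}, and splitting the kernels into their near-$0$ and near-$\infty$ parts so that the two separate criteria can be applied. So first I would write each kernel as $K_\tfrk = K_\tfrk^{(0)} + K_\tfrk^{(\infty)}$ with $K_\tfrk^{(0)} = K_\tfrk \1_{|x| \le 1}$ and $K_\tfrk^{(\infty)} = K_\tfrk \1_{|x|>1}$ (up to smoothing the cutoff, which is harmless). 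Expanding the product in \eqref{eq:I_Gamma} gives a finite sum of integrals, each of which is of the form $I_\Gamma$ but with every edge now carrying either a ``small-scale'' kernel bounded by $|x|^{\deg_0\tfrk(e)}$ on all of $\R^d$ (and supported in $\{|x|\le 1\}$), or a ``large-scale'' kernel bounded by $|x|^{\deg_\infty\tfrk(e)}$ on all of $\R^d$ (and supported in $\{|x|>1\}$).

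Next, for a fixed such term, I would bound it by a product-form integral: replace each edge kernel by its power-law bound, $|K_\tfrk^{(0)}(x)| \le c_1 |x|^{\deg_0\tfrk(e)}$ or $|K_\tfrk^{(\infty)}(x)| \le c_2|x|^{\deg_\infty\tfrk(e)}$, and the test function $\varphi$ by $C\prod_{v\in\cV_\ell}\1_{|x_v|\le L}$ using that it has compact support. The resulting integral is a ``generalised convolution'' of purely homogeneous (truncated) kernels, exactly the object analysed by the power-counting theorems. The small-scale integrability of this bounding integral near any partial diagonal $\{x_v = x_w : v,w\in \bar\cV\}$ — equivalently, near the origin after recentring — is governed precisely by the condition $\deg_0\bar\Gamma > 0$ for every subgraph, which is \cite[Prop.~2.3]{hairer2016analyst_BPHZ} (or \cite{weinberg1960}); the convergence at infinity (i.e.\ decay faster than the ``volume'' of the unconstrained directions) is governed by $\deg_\infty\P_\Gamma < 0$ over all tight partitions, which is \cite[Prop.~4.1]{hairer2016analyst_BPHZ}. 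The point of the cutoffs is that for the ``small-scale'' edges we only need the first criterion and the bound on $\{|x|\le 1\}$, while for ``large-scale'' edges the large-scale criterion takes over; a term mixing both types inherits the relevant constraint from each family of edges, and one checks that if the full $\Gamma$ satisfies both \eqref{eq:small_scales_crit} and \eqref{eq:large_scales_crit}, then so does every term in the expansion (dropping edges only makes $\deg_0\bar\Gamma$ larger and never makes the traversing-edge sum in $\deg_\infty\P_\Gamma$ smaller in the wrong direction, once one accounts for the support restrictions). The tightness condition on the partition, together with the assumption that $\varphi$ is supported in a bounded set, is exactly what rules out the spurious ``all vertices escape to infinity together'' divergence and replaces it by the legged vertices being pinned.

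I would then assemble the pieces: the original integral $I_\Gamma(\varphi)$ is dominated by a finite sum of the bounding integrals, each of which is finite by the two cited propositions, hence $I_\Gamma(\varphi) < \infty$. Finally I would remark, as the text already flags in the proof of Lemma~\ref{kernel-R}, that although \cite{hairer2016analyst_BPHZ} is phrased for $\R^{d+1}$ with parabolic scaling, the proofs of Prop.~2.3 and Prop.~4.1 use only the scaling structure and go through verbatim for the Euclidean scaling on $\R^d$ used here; this is a one-line observation rather than a new argument.

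The main obstacle I anticipate is purely organisational: making the cutoff decomposition interact cleanly with the subgraph/partition power-counting so that one genuinely does not need a criterion of the form ``$\deg_\infty\bar\Gamma$'' for arbitrary subgraphs (which would be false in general) — the resolution is that the large-scale criterion is about tight partitions, not subgraphs, and the compact support of $\varphi$ plus the support of the $K^{(\infty)}$ pieces is what makes the partition formulation the right one. Getting the constants and the direction of the inequalities exactly right when an edge is ``small-scale'' but lies inside a subgraph on which one is checking a large-scale condition (answer: such an edge contributes a bounded, in fact compactly supported, factor and can simply be estimated by its sup norm or dropped) is the kind of detail that needs care but presents no conceptual difficulty.
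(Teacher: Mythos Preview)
The paper does not actually give a proof of this lemma: it is stated as a direct consequence of \cite[Prop.~2.3 \& 4.1]{hairer2016analyst_BPHZ} (with a pointer to \cite{weinberg1960} for the small-scale part), and no further argument is supplied. Your proposal is therefore not competing with a proof in the paper but rather filling in the ``combining'' step that the paper leaves implicit.

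As an outline of that reduction your sketch is sound. The decomposition $K_\tfrk = K_\tfrk^{(0)} + K_\tfrk^{(\infty)}$ followed by expanding the product is the natural way to separate the two criteria, and your observations about how mixed terms behave are correct in spirit: a $K^{(0)}$ edge is compactly supported and hence bounded at large scales (so it can be dropped from the traversing-edge sum in \eqref{eq:large_scales_crit} without harm, or more precisely forces its endpoints into the same block of any relevant partition), while a $K^{(\infty)}$ edge is bounded near the diagonal and so contributes nothing to the small-scale divergence in \eqref{eq:small_scales_crit}. The one place where your write-up is a bit loose is the phrase ``never makes the traversing-edge sum in $\deg_\infty\P_\Gamma$ smaller in the wrong direction''; the clean statement is that if an edge carries a compactly supported kernel, then any configuration with that edge traversing has bounded contribution, so one only needs to check \eqref{eq:large_scales_crit} for partitions in which all $K^{(0)}$-edges are non-traversing, and those are a subset of the tight partitions of the full diagram. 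This is what you are getting at, and once said precisely the argument goes through.
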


\begin{remark}\label{remark:tree_diagram}
	If $\Gamma$ is a tree (no undirected cycle), 
	then \eqref{eq:small_scales_crit} is equivalent to imposing that
	$\deg_0 \mathfrak t (e) > -d$ for every
	$e \in \cE$.
\end{remark}

It is appropriate to state here an example, which will be used in the next section.
\begin{example}\label{lem:diagram0_finite}
Let $f,g \in \Cc(\R^d)$. Then for any $\kappa \in(0,d)$  and any $\lambda \in (0, \frac \kappa 2)$, 
	\begin{equ} \label{I-Gamma0}
		I_{\Gamma_0}(f,g; \lambda) :=  \int_{\R^{4d}} |y_1 -y_2|^{-\kappa} \Bigl( \prod_{i=1}^2 |x_i -y_i|^{\lambda-d}\Bigr)
		|f(x_1) g(x_2)| dy dx<\infty, 
	\end{equ}
	where $dy = dy_1 dy_2$ and $dx = dx_1 dx_2$. 
\end{example}
Since $\kappa<d$ and $\lambda>0$,  as \eqref{eq:homog_conv}, $\int |x_2-y_2|^{\lambda-d}|y_1-y_2|^{-\kappa} dy_2= C|y_1-x_2|^{\lambda-\kappa}$. Therefore, integrate out $y_2$ in $I_{\Gamma_0}$, it reduces to a constant multiple of the integral $I_{\Gamma_0'}$.  Since $2 \lambda< \kappa$, we repeat the procedure, eliminating $y_1$, we arrive at the estimate:
\begin{equs}
	I_{\Gamma_0} \lesssim I_{\Gamma_0'} &= \int_{\R^{3d}}  |x_1 -y_1|^{\lambda-d} |x_2 -y_1|^{\lambda-\kappa}  
	|f(x_1) g(x_2)| dy_1 dx \\
	&\lesssim \int_{\R^{2d}} |x_1-x_2|^{2 \lambda -\kappa} |f(x_1) g(x_2)| dx <\infty.
\end{equs}
This straightforward calculation provides an excellent opportunity to demonstrate graphical interpretations of fractional kernel integrals and the applications of Lemma  \ref{lem:diagram_integration}. 
Integrating out  $y_2$ corresponds to collapsing the edge $[y_1, y_2]$ in $\Gamma_0$ and removing the node $\{\bbullet_{y_2}\}$ from the diagram.
The  Feynman diagram labelled $\Gamma_0$ is associated with the integral.  Once integrated out all possible edges between internal vertices $\cV_\ii$,  the resulting integral is then associated with the reduced graph labelled $\Gamma_0'$,
\begin{center}
	\hspace{-5em}
	\begin{tikzpicture}[scale=0.9, every node/.style={transform shape}]
	\node at (-0.5,0) {$y_1$};
	\node at (3.5,0) {$y_2$};
	\node at (-0.5,2) {$x_1$};
	\node at (3.5,2) {$x_2$};
	\node at (-0.8, 1) { \textcolor{gray}{$\lambda -d$}};
	\node at (3.8 , 1) { \textcolor{gray}{$\lambda -d$}};
	\draw[color = red, thick] (0,0) -- (3,0) node [midway, above] {$-\kappa$};
	\draw[color = gray, thick] (0,0) -- (0,2);
	\draw[color = gray, thick] (3,0) -- (3,2);
	
	\draw[blue, fill=blue] (0,0) circle (3pt);
	\draw[blue, fill=blue] (3,0) circle (3pt);
	\draw[fill=black] (0,2) circle (3pt);
	\draw[fill=black] (3,2) circle (3pt);

	\hskip 40pt
	\node at (5.0,2) {$x_1$};
	\node at (9,2) {$x_2$};
	\node at (7,-0.5) {$y$};
	\node at (5.3, 1) { \textcolor{gray}{$\lambda -d$}};
	\node at (8.6 , 1) { \textcolor{gray}{$\lambda -\kappa$}};
	\draw[color = gray, thick] (6.9,2) -- (8.4,0);
	\draw[color = gray, thick] (9.9,2) -- (8.4,0);
	\draw[black, fill=black] (6.9,2) circle (3pt);
	\draw[black, fill=black] (9.9,2) circle (3pt);
	\draw[blue, fill=blue] (8.4,0) circle (3pt);
	
	%
	%
	\end{tikzpicture}
	\\
	\hspace{-1.5em}Diagram $\Gamma_0$   
	\hskip 116pt Diagram $\Gamma_0'$  \hskip 30pt 
	\\
\end{center}

The small scales $\deg_0$ and the large scales $\deg_\infty$ of each edge are equal and labelled. To explain the terminology we work with $\Gamma_0$. Note that all 3 labels are greater than $-d$, then the small scale criterion  is trivially satisfied by Remark~\ref{remark:tree_diagram}.
Let $ \cV_\ell = \{ \bullet_{x_1}, \bullet_{x_2}\}$  denote the set of single-legged vertices and the remaining $\cV_\ii =\{ \bbullet_{y_1}, \bbullet_{y_2} \}$. A tight non-trivial partition must have an element $A_0$ containing $\cV_\ell$, so they are:
$\P_1= \{ \bullet_{x_1}, \bullet_{x_2},\bbullet_{y_1} \}\cup\{\bbullet_{y_2}\} $, $\P_2= \{ \bullet_{x_1}, \bullet_{x_2},\bbullet_{y_2} \}\cup\{\bbullet_{y_1}\} $,  $\P_3:= \{ \bullet_{x_1}, \bullet_{x_2}\} \cup \{\bbullet_{y_1},\bbullet_{y_2}\} $, and $ \P_4:=\{ \bullet_{x_1}, \bullet_{x_2}\} \cup \{\bbullet_{y_1}\}\cup\{\bbullet_{y_2}\} $. By the Lemma the integral is finite if  $\deg_\infty (\P_i)<0$ for each, which is trivial to check. 
The traversing edges for $\P_1$ are: $[y_1, y_2]$ and $[x_2, y_2]$, hence $\deg_\infty (\P_1)=\lambda-d -\kappa+d<0$ in this case, and similarly for $\P_2$. Whilst, $\deg_\infty \,\P_3  =  2(\lambda-d) + d = 2\lambda -d$, and $\deg_\infty \, \P_4=   2(\lambda-d) -\kappa + 2d = 2\lambda - \kappa <0$.

\subsection{Moments and large time asymptotics of SHE}
In this subsection,  we  prove some key moment estimates and limit results  for proving Theorem~\ref{thm:basic-convergence} in the next section, extending those from \cite{gu20_nlmSHE}. 
We also prove a spatial covariance estimate (Lemma~\ref{lem:Du_L2+cov_estimate}) using the Clark--Ocone formula. 

\smallskip\noindent \textbf{Notation.} We view $\xi$ as an isonormal Gaussian process on the Hilbert space $\mathcal{H}$,  the closure\footnote{In fact, $R$ is in principle allowed to be 
	such that there
	are non-trivial elements with zero norm, in which case we also quotient them out.} of $\Ccinf(\R \times \R^d)$ under the inner product 
$$  \langle f , g  \rangle_\cH = \int_{-\infty}^{\infty} \int_{\R^{2d}} f(s, y_1) g(s, y_2) R(y_1-y_2) dy_1 dy_2 ds.
$$
Thus, $\E [  \xi(f) \xi(g)] =  \langle f , g  \rangle_\cH $ for $f, g \in \cH$. 
By  \eqref{eq:control_space} in Lemma~\ref{lem:gaussian_cov_est}, the heat kernel belongs to $\cH$, 
provided  $|R(x)| \leq \frac{c_R}{1 + |x|^\kappa}$ with $\kappa \in (2, d)$.
\smallskip

We consider the space $\bD^{1,2} \subset L^2(\Omega)$ of real valued random variables $X$ 
such that  
\begin{equ}
	\| X \|^2_{\bD^{1,2}} :=  
	\E \big( |X|^2 \big)
	+ \E \big( \| DX\|_\cH^2  \big) < \infty, 
\end{equ}
where $D$ is the Malliavin derivative operator. 
We often identify $DX$ with a function-valued representative (among its equivalence class in $\cH$), writing 
$D_{r,z}X$ for its pointwise evaluation at $(r , z)$ in $ \R \times \R^d$. Note that, for $h \in \cH$, 
one has $D_{r,z}  \xi(h) = h(r,z)$. 
We write $\delta: \bD^{1,2}(\cH) \to L^2(\Omega)$ for the adjoint of $D$, also referred to as divergence
operator or Skorokhod integral.
If $w \in \dom \, \delta$ is $\F_t$-adapted, the Skorokhod integral $\delta(w)$ coincides with the It\^o integral 
$\int_{\R \times \R^d} w(s,y) \xi(ds,dy)$.

By \cite[Prop.~1.3.8]{nualart2006malliavin}, sufficiently regular elements $v \in \dom \,  \delta$ 
satisfy the commutation relation
\begin{equ}\label{eq:commutation_D_adjoint}
	D_{r,z} \delta(v) = v(r,z) + \int_{-\infty}^\infty \int_{\R^{d}} D_{r,z} v(s,y) \xi(ds,dy).
\end{equ}
The integral on the right-hand side is in the It\^o sense as long as $D_{r,z} v(s,y)$ is $\F_s$-adapted (in general it is a Skorokhod integral). The equality holds as elements of $\cH$.

Combining the commutation relation \eqref{eq:commutation_D_adjoint} with the mild formulation of the
solution $u$ allows for a useful characterisation of its Malliavin derivative \cite[p.~5]{gu20_nlmSHE} 
and to derive the estimates below, see \eqref{eq:moment_Du}. 
The identity \eqref{eq:commutation_D_adjoint} will also be used in the next section to express the Malliavin derivative of the fluctuations $X^{\epsilon,g}_t$. Given the Lipschitz assumption on $\sigma$, we have $D_{r,z}\sigma(u(s,y)) = \Sigma(s,y) D_{r,z} u(s,y)$ for $\Sigma$ a bounded and adapted process (if $\sigma$ is continuously differentiable then $\Sigma(s,y) = \sigma'(u(s,y))$).

The results in \Cref{lem:moments}  and \Cref{thm:stat_field}
were shown in \cite[Lem.~2.1--2.2, Thm 1.1]{gu20_nlmSHE} for the case when $R$ is compactly supported.
The pointwise bounds on the Malliavin derivative
rely on the following estimate, which is inspired by  \cite[Lemma 2.2]{chen2019comparison} but 
drops the requirement for $R$ to be positive.
Write $\B_+( \R_+ \times \R^d)$ for the set of measurable functions from $\R_+ \times \R^d$ to $ \R_+$ and a map 
$J_t(s,x): \B_+( \R_+ \times \R^d) \times  \B_+( \R_+ \times \R^d) \to \R$ 
as follows
$$J_t(s,x)(g_1, g_2)=\int_{ \R^{2d}} 
P_{t-s}(x-y_1)P_{t-s}(x-y_2)| R(y_1-y_2)| g_1(s, y_1)g_2(s, y_2) dy_1dy_2.$$
\begin{lemma}\label{lem:ptw_criterion}
	Let $\lambda>0$ and $g_n: \R_+ \times \R^d \to \R_+$ be nonnegative measurable functions. 
	Assume the function $R: \R^d \to \R$ satisfies the following for some $\alpha >1$, 
	\begin{equ}\label{eq:cov_sing_ass}
		\int_{ \R^{2d}}  P_r(z_1 - x_1) P_r(z_2 - x_2) 
		\bigl| R(z_1 -z_2)\bigr| dz_1 dz_2 \leq C_R (1 + r)^{-\alpha}. 
	\end{equ}
	If $g_0(t,x) \leq P_t(x)$ and $g_n$ satisfy for all $n \geq 0$, $t > 0$ and $x\in \R^d$,
	\begin{equ}\label{eq:est_g_contraction}
		\begin{aligned}
			&g_{n+1}^2(t,x) - ( P_t(x))^2 \le 
			\lambda \int_0^t J_t(s,x)(g_n,g_n) ds <\infty,
		\end{aligned} 
	\end{equ}
	and if furthermore $\lambda \in (0, \ff 1{c_\alpha})$ where $c_\alpha := 4C_R \int_0^\infty (1 + s)^{-\alpha} ds$ then 
	\begin{equ}\label{eq:est_g_pointwise}
		\sup_{n \geq0} g_n(t,x) \leq \frac {P_t(x)}{\sqrt{1 - \lambda c_\alpha}} .
	\end{equ} 
	
\end{lemma}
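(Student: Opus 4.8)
The plan is to argue by induction on $n$, showing that the recursive bound \eqref{eq:est_g_contraction} propagates a uniform Gaussian-type bound on $g_n$ with a constant that is controlled by the geometric-series factor $(1-\lambda c_\alpha)^{-1/2}$. Concretely, I would set $a_n = \sup_{t>0,\,x\in\R^d} \big(g_n(t,x)/P_t(x)\big)^2$, which is finite for $n=0$ with $a_0\le 1$ by the hypothesis $g_0(t,x)\le P_t(x)$. The goal is to show $a_{n+1}\le 1 + \lambda c_\alpha' a_n$ for an appropriate constant $c_\alpha'\le c_\alpha$, from which $\sup_n a_n \le \sum_{k\ge 0}(\lambda c_\alpha)^k = (1-\lambda c_\alpha)^{-1}$ follows immediately, giving \eqref{eq:est_g_pointwise}.

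For the inductive step, I would start from \eqref{eq:est_g_contraction} and bound $g_n(s,y_i)^2 \le a_n P_s(y_i)^2$ inside the definition of $J_t(s,x)$. Wait — one has to be slightly careful: $J_t$ is built from the product $g_1(s,y_1)g_2(s,y_2)$, not from squares, so I would instead use $g_n(s,y_1)g_n(s,y_2)\le \sqrt{a_n}\,P_s(y_1)\sqrt{a_n}\,P_s(y_2) = a_n P_s(y_1)P_s(y_2)$. Substituting, the right-hand side of \eqref{eq:est_g_contraction} is at most
\begin{equ}
\lambda a_n \int_0^t \int_{\R^{2d}} P_{t-s}(x-y_1)P_{t-s}(x-y_2)|R(y_1-y_2)|P_s(y_1)P_s(y_2)\,dy_1\,dy_2\,ds.
\end{equ}
The next step is to estimate this double-space integral. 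Using the Chapman--Kolmogorov identity $P_{t-s}(x-y_i)P_s(y_i) = P_t(x)\,\tilde P(y_i)$ where $\tilde P$ is the density of a Brownian bridge pinned at $0$ and $x$ at the appropriate times, one factors out $P_t(x)^2$ and is left with $\int_{\R^{2d}}\tilde P(y_1)\tilde P(y_2)|R(y_1-y_2)|\,dy_1\,dy_2$ integrated over $s$. The bridge densities have variance $\lesssim s(t-s)/t \le s\wedge(t-s)$, so after a change of variables this integral is comparable to $\int_0^t \big[\text{something like } (1+s\wedge(t-s))^{-\alpha}\big]\,ds$; invoking hypothesis \eqref{eq:cov_sing_ass} directly (with $r = s(t-s)/t$ or, more crudely, bounding by the contribution at the smaller of the two endpoints) bounds the $s$-integral by $2\int_0^\infty(1+r)^{-\alpha}\,dr$, which with the factor $4C_R$ built into $c_\alpha$ and the two symmetric halves of $[0,t]$ gives the clean bound $\le c_\alpha$. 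Hence $g_{n+1}^2(t,x) \le P_t(x)^2(1 + \lambda c_\alpha a_n)$, i.e.\ $a_{n+1}\le 1 + \lambda c_\alpha a_n$, closing the induction.

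The main obstacle, and the step deserving the most care, is the reduction of the space integral $\int_{\R^{2d}} P_{t-s}(x-y_1)P_{t-s}(x-y_2)|R(y_1-y_2)|P_s(y_1)P_s(y_2)\,dy\,dy$ to the form $P_t(x)^2 \cdot (\text{function of } s)$ to which \eqref{eq:cov_sing_ass} applies: one must correctly identify the effective time scale of the bridge (it is $s(t-s)/t$, not simply $s$) and check that bounding $(1+s(t-s)/t)^{-\alpha}$ by an integrable function of $s$ on $(0,t)$ is legitimate — which works precisely because $s(t-s)/t \ge \tfrac12(s\wedge(t-s))$, so on $[0,t/2]$ it dominates $s/2$ and on $[t/2,t]$ it dominates $(t-s)/2$, making the $s$-integral bounded by $2\int_0^\infty (1+r/2)^{-\alpha}\,dr = 4\int_0^\infty(1+u)^{-\alpha}\,du$, finite since $\alpha>1$. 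The bookkeeping of constants here is exactly what forces the factor $4$ in the definition of $c_\alpha$, and care is needed so that the constant genuinely does not degrade with $t$, which is what makes the final bound uniform in $(t,x)$.
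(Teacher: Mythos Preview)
Your proposal is correct and follows essentially the same route as the paper. The only cosmetic difference is that the paper introduces an auxiliary sequence $f_n$ defined by equality in the recursion, first shows $g_n \le f_n$, and then proves $f_n(t,x)\le P_t(x)\sqrt{H_n}$ with $H_n=\sum_{j=0}^n(\lambda c_\alpha)^j$; your formulation via $a_n=\sup_{t,x}(g_n/P_t)^2$ and the recursion $a_{n+1}\le 1+\lambda c_\alpha a_n$ collapses these two steps into one but is otherwise identical, including the bridge identity $P_{t-s}(x-y)P_s(y)=P_t(x)P_{s(t-s)/t}(y-\tfrac{s}{t}x)$ and the estimate $\int_0^t(1+s(t-s)/t)^{-\alpha}\,ds\le 4\int_0^\infty(1+u)^{-\alpha}\,du$.
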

\begin{proof} 
	Let $f_0(t,x) = P_t(x)$  and
	for $n \geq 0$, let $f_n$ be the positive solution to
	\begin{equ}[e:deffn]
		f_{n}^2 (t,x)  = P_t(x)^2 + \lambda  \int_0^t J_t(s,x)(f_{n-1}, f_{n-1})ds.
	\end{equ}
	We note that $g_n \leq f_n$ for all $n\geq 0$. Indeed $g_0\le f_0$ and, for all $n \ge 0$, $g_n\le f_n$ implies that 
	$$g_{n+1}^2(t,x) \le ( P_t(x))^2 + \lambda \int_0^t J_t(s,x)(f_n,f_n) ds=f_{n+1}^2(t,x)\;,$$
	so that $g_n \leq f_n$ since both functions are positive by assumption \slash definition.
	
	Letting $H_n= \sum_{j =0}^{n} (\lambda c_\alpha)^j $, we show  by induction that  $f_n(t,x) \leq P_t(x) \sqrt{H_n}$.
	The case $n = 0$ is trivial. Using the relation
	\begin{equ}
		P_{t-s}(x-y) P_s(y) = P_t(x) P_{\frac {s(t-s)} t}	(y - \tfrac s t x )
	\end{equ}
	and the induction hypothesis on $f_n$
	we have 
	\begin{equs}
		&\int_0^t J_t(s,x)(f_n,f_n) ds \\
		&\leq H_n
		\int_0^t \int_{ \R^{2d}} 
		P_{t-s}(x-y_1) P_{t-s}(x-y_2)| R(y_1-y_2)| P_s (y_1) P_s(y_2) dy ds\\
		&  \leq H_n P_t(x)^2
		\int_0^t \int_{ \R^{2d}} 
		P_{\frac {s(t-s)} t} (y_1 - \tfrac s t x )    P_{\frac {s(t-s)} t} (y_2 - \tfrac s t x )
		| R(y_1-y_2)| dy ds\\
		&  \leq C_R H_n P_t(x)^2 \int_0^t ( 1 +\tfrac {s(t-s)} t)^{-\alpha} ds 
		\leq 2 C_R H_n P_t(x)^2 \int_0^{t/2} (1 + \tfrac s 2)^{-\alpha} ds \\ 
		& \leq 4 C_R H_n P_t(x)^2 \int_0^\infty (1+ s)^{-\alpha} ds \le c_\alpha H_n P_t(x)^2\;, 
	\end{equs}
	where we used assumption \eqref{eq:cov_sing_ass}	 to obtain the penultimate line.
	It then follows from \eqref{e:deffn} that
	\begin{equ}
		f_{n+1}^2(t,x) = P_t(x)^2 +\lambda \int_0^t J_t(s,x)(f_n,f_n) ds \leq P_t(x)^2 (1+\lambda c_\alpha H_n) = P_t(x)^2 H_{n+1},
	\end{equ}
	concluding the induction step. It follows that $g_n(t,x) \leq  P_t(x) \sqrt{H_\infty} = P_t(x)/\sqrt{1 -  \lambda c_\alpha}$,	
	concluding the proof.
\end{proof}

Set $\|\sigma\|^2=  | \sigma(0)| ^2+  |\sigma|_{\Lip}^2$, 
then $\beta\|\sigma\|$ can play the role of $\beta$, by rescaling $\sigma$ so that $\|\sigma\|=1$. 
Our estimates  hold  if $\beta < \f c {|\sigma|_\Lip}$, where $c$ is a constant depending on $p, \kappa$ and $R$.
\begin{lemma}\label{lem:moments} 
	Let $d\geq 3$, $\sigma$ be Lipschitz continuous, 
	and suppose that $\xi$ satisfies Assumption~\ref{assump-noise}. 
Then, for any $p \ge 2$, there exists positive constants $\beta_0(p)$ and $C_p$, 
	such that if $\beta \leq\beta_0(p)$, the solution of (\ref{eq:basic_mSHE}) has uniform $p$th moments:
	\begin{align}
		\sup_{s \geq 0,  x\in \R^d } \| u(s,x)\|_p 
		= \sup_{s\geq 0} \| u(s,0)\|_p \leq C_p. \label{eq:moment_u}
	\end{align}
	Furthermore, $u(t,x)$ is Malliavin differentiable
	with $\sup_{t\geq 0, x \in \R^d} \| u(t,x) \|^2_{\bD^{1,2}} < \infty$
	and, for all $x,z \in \R^d$ and $t > r > 0$,
	\begin{equ}
		\| D_{r,z}u(t,x)\|_{p} \lesssim C_{p} P_{t-r}(x-z) \label{eq:moment_Du}.
	\end{equ} 
\end{lemma}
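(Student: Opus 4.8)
The plan is to realise $u$ as the limit of the Picard iterates $u_0\equiv 1$, $u_{n+1}(t,x)=1+\beta\int_0^t\int_{\R^d}P_{t-s}(x-y)\sigma(u_n(s,y))\,\xi(ds,dy)$, and to propagate both the moment bound and the pointwise Malliavin bound along this iteration, following the scheme of \cite{gu20_nlmSHE} but with the integrable-covariance estimates used there replaced by the non-integrable analogues in \Cref{lem:gaussian_cov_est} and, crucially, by \Cref{lem:ptw_criterion}. For \eqref{eq:moment_u}, I would first reduce to $x=0$ by spatial stationarity of $\xi$, then apply the Burkholder--Davis--Gundy inequality together with the $L^p(\Omega)$ bound for stochastic integrals against spatially coloured noise, the Lipschitz bound $\|\sigma(u_n(s,y))\|_p\lesssim 1+\|u_n(s,y)\|_p$, and Cauchy--Schwarz in $L^p(\Omega)$, to obtain, writing $a_n(t):=\sup_x\|u_n(t,x)\|_p$ and $K(r):=\sup_x\int_{\R^{2d}}P_{r}(x-y_1)P_{r}(x-y_2)|R(y_1-y_2)|\,dy_1\,dy_2$, the recursion
\begin{equ}
a_{n+1}(t)^2\lesssim 1+\beta^2\int_0^t (1+a_n(s))^2\, K(t-s)\,ds\;.
\end{equ}
By \eqref{eq:time_decay_est} one has $K(r)\lesssim (1+\sqrt r)^{-\kappa}$, and the hypothesis $\kappa>2$ enters exactly here, since it is what makes $\int_0^\infty(1+\sqrt r)^{-\kappa}\,dr$ finite; a standard bootstrap then gives $\sup_n\sup_t a_n(t)<\infty$ for $\beta$ below an explicit threshold $\beta_0(p)$ depending on $p$, $R$, $\sigma$, while a parallel contraction estimate for $u_{n+1}-u_n$, geometric in $\beta$, yields $u_n\to u$ and hence \eqref{eq:moment_u}.

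For the Malliavin derivative I would use that $u(t,x)\in\bD^{1,p}$ with $Du_n\to Du$ as in \cite{gu20_nlmSHE} (via the chain rule for the Lipschitz function $\sigma$, giving the bounded adapted process $\Sigma$ with $D_{r,z}\sigma(u(s,y))=\Sigma(s,y)D_{r,z}u(s,y)$ and $|\Sigma|\le|\sigma|_{\Lip}$), and then differentiate the mild equation for $u_n$ via the commutation relation \eqref{eq:commutation_D_adjoint}:
\begin{equ}
D_{r,z}u_{n+1}(t,x)=\beta P_{t-r}(x-z)\sigma(u_n(r,z))+\beta\int_r^t\int_{\R^d}P_{t-s}(x-y)\,\Sigma_n(s,y)\,D_{r,z}u_n(s,y)\,\xi(ds,dy)\;.
\end{equ}
By translation invariance of $\xi$ in space I may take $z=0$, and since the equation for $s\mapsto D_{r,z}u_n(s,\cdot)$ is an affine stochastic heat equation started at time $r$ it suffices to treat $r=0$. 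The key step is to feed this into \Cref{lem:ptw_criterion}: estimate \eqref{eq:time_decay_est} shows that its hypothesis \eqref{eq:cov_sing_ass} holds with $\alpha=\kappa/2>1$, so setting $g_n(t,x):=\|D_{0,0}u_n(t,x)\|_p/A$ with $A$ a suitable multiple of $\beta$ (chosen using the uniform-in-$n$ moment bound on the iterates, so that $A<\infty$), an application of BDG to the displayed identity, together with $|\Sigma_n|\le|\sigma|_{\Lip}$ and Cauchy--Schwarz, produces precisely the recursive inequality \eqref{eq:est_g_contraction} with $\lambda$ a constant multiple of $\beta^2|\sigma|_{\Lip}^2$; the inhomogeneous term $\beta P_t(x)\sigma(u_n(0,0))$ is absorbed into the $P_t(x)^2$ slot exactly because of the choice of $A$. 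Choosing $\beta$ small enough that $\lambda<1/c_\alpha$, \Cref{lem:ptw_criterion} gives $\sup_n\|D_{0,0}u_n(t,x)\|_p\lesssim P_t(x)$ uniformly in $n$, and passing to the limit yields \eqref{eq:moment_Du}.

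For the $\bD^{1,2}$ bound I would use
\begin{equ}
\E\|Du(t,x)\|_\cH^2=\int_0^t\int_{\R^{2d}}\E\bigl[D_{s,y_1}u(t,x)\,D_{s,y_2}u(t,x)\bigr]\,R(y_1-y_2)\,dy_1\,dy_2\,ds\;,
\end{equ}
bound the integrand by its absolute value, use $|\E[D_{s,y_1}u(t,x)D_{s,y_2}u(t,x)]|\le\|D_{s,y_1}u(t,x)\|_2\|D_{s,y_2}u(t,x)\|_2$ by Cauchy--Schwarz, and then insert \eqref{eq:moment_Du}, so that \eqref{eq:time_decay_est} gives $\E\|Du(t,x)\|_\cH^2\lesssim\int_0^t(1+\sqrt{t-s})^{-\kappa}\,ds\le\int_0^\infty(1+\sqrt r)^{-\kappa}\,dr<\infty$ uniformly in $(t,x)$; combined with \eqref{eq:moment_u} this gives $\sup_{t,x}\|u(t,x)\|_{\bD^{1,2}}^2<\infty$. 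I expect the Malliavin step to be the main obstacle: unlike in the compactly supported case one cannot afford to discard the large-scale decay of $R$, so the pointwise bound has to be extracted through the fixed-point structure encoded in \Cref{lem:ptw_criterion} rather than by cruder $L^1$-type estimates, and for this it is essential that the moment bound \eqref{eq:moment_u} be established first, since it controls the source term $\sigma(u(r,z))$ in the equation for $D_{r,z}u$.
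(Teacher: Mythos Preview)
Your proposal is correct and follows essentially the same route as the paper: BDG plus Minkowski on the mild formulation combined with the decay estimate \eqref{eq:time_decay_est} for the moment bound, then the commutation relation \eqref{eq:commutation_D_adjoint} applied to the Picard iterates together with \Cref{lem:ptw_criterion} (with $\alpha=\kappa/2$) for the pointwise Malliavin bound, and finally Cauchy--Schwarz plus \eqref{eq:time_decay_est}/\eqref{eq:control_space} for the $\bD^{1,2}$ bound. The only cosmetic difference is that the paper closes the inequality for $\|u(t,0)\|_p$ directly on the solution (a self-closing Grönwall-type step) rather than through the Picard iterates, but your uniform-in-$n$ bootstrap achieves the same thing.
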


\begin{remark}
	Given the flat initial condition $u(0, \cdot) \equiv 1$ and translation invariance of $\xi$, the process $u(t,\cdot)$ is stationary in space. 	The constant $C_p$ is monotone increasing in $p$.
The threshold $\beta_0(p)$ is monotone decreasing in $p$. 
\end{remark}

\begin{proof}
The key for the proof is Lemma~\ref{lem:ptw_criterion}.  The proof is otherwise quite standard, see e.g. \cite[Lem.~2.1-2.2]{gu20_nlmSHE} and \cite[Thm 6.4]{chen2019spatial}. 

First we recall that by the Burkholder--Davis--Gundy inequality, for any time interval $I$,
\begin{equ}\label{BDG}
	\Big\|\int_{I\times \R^d} f(s,y) \xi(ds, dy)\Big\|_{p}
	\leq c_p \,\E \left[ \Big(\int_I\int_{\R^{2d}} f(s,y_1)f(s,y_2) R(y_1-y_2) dy ds\Big)^{\f p2 } \right]^{\f 1 p}.
\end{equ}
We apply it to the stochastic integral $ \int_0^r \int_{\R^{d}}P_{t-s} (x-y)  \sigma(u(s, y))\xi(ds, dy)$ in the mild formulation, with $0\leq r \leq t$, obtaining the bound below where we denote $dy = dy_1 dy_2$,
\begin{equ} \label{eq:BDG_est1}
	\| u(t,0)\|_p \leq 1 + c_p \beta \E  \biggl[  \biggl(\int_0^t \int_{\R^{2d}} \Bigl(\prod_{j=1}^2 P_{t-s} (y_j) \sigma(u(s, y_j)) \Bigr) R(y_1 -y_2) dy ds \biggr)^{\f p 2}  \biggr]^{\f 1 p}.
\end{equ}
Since $u(s, \cdot)$ is stationary, by H\"{o}lder's inequality,
\begin{equ}
	\|   \sigma(u(s, y_1))\sigma(u(s, y_2)) \|_{\f p 2} \leq \|  \sigma(u(s, 0)) \|_{p}^2 \le  2 | \sigma(0)| ^2+ 2 |\sigma|_{\Lip}^2 \|  u(s, 0) \|_{p}^2. 
\end{equ}
Then, letting $f(t) := \| u(t,0) \|_{p}$,  and  applying Minkowski inequality in \eqref{eq:BDG_est1},
\begin{equ}
	f(t) \leq   1 +  \sqrt{2} c_p \beta  \biggl(
	\int_{0}^{t}\int_{\R^{2d}}   P_{t-s} (y_1) P_{t-s}(y_2)  \big(  | \sigma(0)| ^2 +   |\sigma|_{\Lip}^2 f^2(s)\big) \big| R(y_1 -y_2)\big| dy ds \biggr)^{\f 1 2}. 
\end{equ}
By \eqref{eq:time_decay_est}, it follows that there exist
constants\footnote{The constants are $C_1 = c_p \sqrt{ 2C_R}  |\sigma|_\Lip$ and $C_2 =   1+ \beta c_p \sqrt{2C_R} |\sigma(0)|$, where $C_R$ is from \eqref{eq:time_decay_est}.  } $C_1, C_2 >0$ (independent of $t$) such that 
\begin{equ}
	f(t) \leq C_2 + \beta C_1   \biggl( \int_{0}^{t}
	 (1+ t-s)^{-\f \kappa 2 } 
	f^2(s) ds \biggr)^{\f 1 2}, 
\end{equ} 
Since $\kappa >2$,  the integral $c_\kappa = \int_{0}^{\infty} (1+ r)^{-\f \kappa 2 }   dr$ is finite. It follows that
$\sup_t f(t)\le C_2 +  \sqrt {c_\kappa} C_1 \beta \sup_t f(t)$. 
For any $\beta \le ( 2 \sqrt {c_\kappa} C_1)^{-1}$, we have that  $\sup_t f(t)\le 2 C_2$. The bound	\eqref{eq:moment_u} follows from the fact that the process $u(t,\cdot)$ is stationary in space.

For \eqref{eq:moment_Du}, let us consider $u_0(t,x) = 1$, and for $n\geq 0$ define the Picard iteration 
\begin{equ}
	u_{n+1}(t,x) = 1 + 
	\beta \int_{0}^{t} \int_{ \R^d} P_{t-s}(x-y) \sigma(u_n(s,y)) \xi(ds, dy).
\end{equ}
A similar argument as for $f(t)$ above shows that, for any $x\in \R^d$, 
\begin{equs}
	& \|u_{n+1}(t,x)-u_n(t,x)\|_p\\
	&\leq \beta c_p | \sigma|_{\Lip}\biggl( \int_{0}^{t}\int_{\R^{2d}}   P_{t-s} (y_1) P_{t-s}(y_2)  \|u_{n}(s,0)-u_{n-1}(s,0)\|_p^2 \big| R(y_1 -y_2)\big| dy ds \biggr)^{\f 1 2}\\
	& \leq  \f 12 \sup_s\sup_x \|u_{n}(s,x)-u_{n-1}(s,x)\|_p, 
\end{equs}
for sufficiently small $\beta$ as above.
Thus $\{u_n(t,x)\}$ is a Cauchy sequence and converges to the unique solution $u$. 

Moreover, it follows from \eqref{eq:commutation_D_adjoint} and \cite[Prop.~1.2.4]{nualart2006malliavin} that
\begin{equs}
D_{r,z} u_{n+1} (t,x) = 
{P_{t-r}(x-z)} \sigma(u_n(r,z)) 
+ \beta \int_{r}^{t} \int_{ \R^d} P_{t-s}(x-y) \Sigma_n D_{r,z} u_{n} (s,y) \xi(ds, dy), 
\end{equs}
where $\Sigma_n(t,x)$ is a stochastic process bounded by $|\sigma|_{\Lip}$, which, intuitively speaking, represents a version of the derivative  $\sigma'(u_n(s,y))$.
Observe that $\sup_t\sup_x\|\sigma(u_n(t,x))\|_p$ is uniformly bounded in $n$, c.f. \eqref{eq:moment_sigma_u}. 
We apply again BDG to the integral term, 
followed by Minkowski and H\"{o}lder inequalities, obtaining\footnote{The constants are given by $C_1' =2c_p^2 |\sigma|_\Lip^2$ and $C_2' = 4 \| \sigma\|^2 C_p^2$.}
\begin{equs}
\| D_{r,z} u_{n+1} (t,x) \|_p^2 &\leq C_2'  P^2_{t-r}(x-z)+ \\
&	\quad	\beta^2   C_1' \int_{r}^{t} \int_{ \R^{2d}} 
|R(y_1-y_2)| \Big( \prod_{i=1}^{2} P_{t-s}(x-y_i) 
\| D_{r,z} u_{n} (s, y_i) \|_p  \Big)
dy ds.
\end{equs}
The covariance bound  \eqref{eq:time_decay_est} implies  \eqref{eq:cov_sing_ass} 
with $\alpha = \kappa/2 > 1$, so that Lemma \ref{lem:ptw_criterion} applies\footnote{As long as $4 \beta^2 C_1' C_R c_\kappa < 1$, therefore can take $\beta_{0}(p)$ such that $ \beta_0^2 c_p^2 c_\kappa C_R  |\sigma|_\Lip^2 < \f 1 8$. }, 
in which we let $t = \theta + r$, $x = \eta +z$ and $g_n(\theta, \eta) = \norm{D_{r,z} u_n (\theta +r,\eta + z)}_p$, to obtain 
\begin{equ}
\| D_{r,z} u(t,x) \|_p \leq \sup_{n \geq0} 
\| D_{r,z} u_{n} (t,x) \|_p  
\lesssim C_p P_{t-r}(x-z)\;. 	
\end{equ}

To show that $u(t,x) \in \bD^{1,2}$, note that Cauchy--Schwarz and \eqref{eq:moment_Du} yield
\begin{equ}[e:covarDu]
\E [ D_{r, z_1} u(t,x) D_{r, z_2} u(t,x) ] \leq \prod_{i=1}^2 \|D_{r, z_i} u(t,x) \|_2 \leq C \, \prod_{i=1}^2 P_{t-r} (x-z_i).
\end{equ}
It then remains to note that \eqref{eq:control_space} and \eqref{e:covarDu} yield
$\| u(t,x) \|^2_{\bD^{1,2}} < \infty$, uniformly over $t$ and $x$, thus
concluding the proof.
\end{proof}

The next result is inspired by \cite[Thm.~1.1]{gu20_nlmSHE} and shows that the solution with constant initial condition
converges in law towards some stationary field $Z$ for large times, with an explicit convergence rate.
In fact, we show  the stronger convergence in probability in the pullback sense, so we write
$u^{(K)}(t, x)$ for the solution of the nonlinear stochastic heat equation with initial condition $1$ at time $-K$, namely
\begin{equ} [e:eqn_pullback]
u^{(K)}(t, x) = 1 + \beta \int_{-K}^{t} \int_{ \R^d} P_{t-s}(x-y) \sigma(u^{(K)}(s,y)) \xi(ds, dy).
\end{equ}
With these notations at hand, the claimed convergence is as follows.

\begin{theorem}\label{thm:stat_field}
Let $d\geq 3$, $\sigma$ be Lipschitz continuous, and suppose that $\xi$ satisfies Assumption~\ref{assump-noise}. Then, for any   $p>1$,  there exists a positive constant $\beta_1(p)$, such that for $\beta\leq\beta_1$, uniformly in $t >  - (K \wedge K')$:
\begin{equ} [eq:Lp_cvg_rate_pull_back]
\sup_{x \in \R^d}	\| u^{(K')}(t,x) - u^{(K)}(t,x) \|_p \lesssim \beta (1 + t + K\wedge K')^{\f{2 - \kappa}4}. 
\end{equ}
Furthermore, there exists a space-time stationary field $\vec { Z}$ such that, for
any compact region $\mfK \subset \R^{d}$ and every $t \in \R$, one has
$\lim_{K \to \infty} \E \sup_{x \in \mfK} |  u^{(K)}(t,x)-\vec {Z}(t,x) |^p \to 0$.
\end{theorem}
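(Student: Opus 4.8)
The plan is to reduce everything to a single a priori estimate: the $L^p$‑distance between two pullback solutions started at different times. Assume without loss of generality $K\le K'$, set $v=u^{(K')}-u^{(K)}$, and subtract the two instances of \eqref{e:eqn_pullback} to get, for $t\ge -K$,
\begin{equ}
	v(t,x)=A(t,x)+\beta\int_{-K}^{t}\int_{\R^d}P_{t-s}(x-y)\bigl(\sigma(u^{(K')}(s,y))-\sigma(u^{(K)}(s,y))\bigr)\,\xi(ds,dy)\;,
\end{equ}
where $A(t,x):=\beta\int_{-K'}^{-K}\int_{\R^d}P_{t-s}(x-z)\sigma(u^{(K')}(s,z))\,\xi(ds,dz)$ collects the contribution of the noise over the extra window $[-K',-K]$. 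The first task is to bound $A$. Applying the BDG inequality \eqref{BDG} and Minkowski's inequality, bounding the product of the two $\sigma(u^{(K')})$ factors in $L^{p/2}$ by $\sup_{s,y}\|\sigma(u^{(K')}(s,y))\|_p^2$ (finite and $K'$‑independent by \Cref{lem:moments} and the time‑translation identity $u^{(K')}(s,\cdot)\eqlaw u(s+K',\cdot)$), and evaluating the remaining Gaussian integral via \Cref{kernel-R}, one obtains
\begin{equ}
	\|A(t,x)\|_p^2\lesssim \beta^2\int_{-K'}^{-K}\int_{\R^d}P_{2(t-s)}(w)\,|R(w)|\,dw\,ds\lesssim\beta^2\int_{t+K}^{\infty}(1+\sqrt{\tau})^{-\kappa}\,d\tau\lesssim\beta^2\,(1+t+K)^{\frac{2-\kappa}{2}}\;,
\end{equ}
the last step using crucially that $\kappa>2$, which makes the $\tau$‑integral convergent with this rate.

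Next, since $v(-K,\cdot)=u^{(K')}(-K,\cdot)-1$ is spatially stationary and the equation preserves spatial stationarity, $\phi(t):=\sup_{x}\|v(t,x)\|_p=\|v(t,0)\|_p\le 2C_p$ is finite. Using BDG, Minkowski, the bound $|\sigma(a)-\sigma(b)|\le|\sigma|_\Lip|a-b|$, and the covariance estimate \eqref{eq:time_decay_est} (whose left‑hand side, after convolving the two heat kernels, is of order $(1+t-s)^{-\kappa/2}$), one gets the renewal inequality, linear in $\phi^2$,
\begin{equ}
	\phi(t)^2\lesssim\|A(t,0)\|_p^2+\beta^2|\sigma|_\Lip^2\int_{-K}^{t}(1+t-s)^{-\frac{\kappa}{2}}\,\phi(s)^2\,ds\;.
\end{equ}
I would close this by a bootstrap on the ratio $\phi(t)^2/(1+t+K)^{(2-\kappa)/2}$. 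Working first over a finite window $t\in[-K,T]$ (on which this ratio is bounded, since $\phi$ is bounded and the denominator is bounded below), and using the convolution bound $\int_{-K}^{t}(1+t-s)^{-\kappa/2}(1+s+K)^{(2-\kappa)/2}\,ds\lesssim(1+t+K)^{(2-\kappa)/2}$ — valid because $\kappa>2$, proved by splitting the integral at the midpoint of $[-K,t]$ — the inequality becomes $M_T\le C+C\beta^2 M_T$ with $M_T:=\sup_{[-K,T]}\phi(t)^2/(1+t+K)^{(2-\kappa)/2}$. For $\beta$ small this gives $M_T\le 2C$ uniformly in $T$, hence $\phi(t)\lesssim\beta(1+t+K)^{(2-\kappa)/4}$, which is \eqref{eq:Lp_cvg_rate_pull_back}.

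For the second assertion, fix $t$. The estimate just proved shows $(u^{(K)}(t,x))_{K}$ is Cauchy in $L^p(\Omega)$ for each $x$, so define $\vec Z(t,x)$ as its $L^p$‑limit, with $\|u^{(K)}(t,x)-\vec Z(t,x)\|_p\lesssim\beta(1+t+K)^{(2-\kappa)/4}$. Spatial stationarity of $\vec Z(t,\cdot)$ passes to the limit, and space‑time stationarity of $\vec Z$ follows from the identity in law $u^{(K)}(t,\cdot)\eqlaw u^{(K+h)}(t+h,\cdot)$ (time‑shift invariance of the law of $\xi$) on letting $K\to\infty$; passing to the limit in the semigroup form of the mild equation identifies $\vec Z$ as a stationary solution of \eqref{eq:basic_mSHE}. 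To upgrade pointwise $L^p$‑convergence to convergence of the spatial supremum over a compact $\mfK$, I would first establish a uniform‑in‑$K$ spatial regularity bound $\|u^{(K)}(t,x)-u^{(K)}(t,y)\|_p\lesssim_p|x-y|^{\theta}$ for some $\theta>0$ and every $p$ — a standard bootstrap parallel to \Cref{lem:moments}, using the heat‑kernel increment estimate \eqref{e:boundDiffHeatTime} inside the BDG bound, the time integral $\int_{-K}^t$ converging uniformly in $K$. Combining this, applied to $v=u^{(K')}-u^{(K)}$, with $\sup_x\|v(t,x)\|_p\lesssim\beta(1+t+K)^{(2-\kappa)/4}$ via a quantitative Kolmogorov/interpolation estimate (cover $\mfK$ by $\sim h^{-d}$ balls of radius $h$, bound $\E\sup_{x\in\mfK}|v(t,x)|^p$ by $h^{-d}\sup_x\|v(t,x)\|_p^p$ plus $h^{\theta' p}$ times the $\C^{\theta'}$‑seminorm of $v(t,\cdot)$ for $\theta'<\theta-d/p$, and optimise over $h$) yields $\E\sup_{x\in\mfK}|u^{(K)}(t,x)-\vec Z(t,x)|^p\to0$, first for $p$ large with $\theta p>d$ and then for all $p$ by Jensen's inequality.

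The main difficulty, compared with the compactly supported case of \cite{gu20_nlmSHE} where the analogous differences decay exponentially, is that here the decay is only the polynomial rate $(1+t+K)^{(2-\kappa)/4}$ while the renewal kernel $(1+t-s)^{-\kappa/2}$ is itself only integrable because $\kappa>2$. One must therefore check with care that convolution against this kernel reproduces the same polynomial rate (rather than degrading it or producing logarithmic factors) and handle the a priori finiteness of the bootstrap constant by the time truncation above; the uniform‑in‑$K$ spatial regularity needed for the interpolation step is routine but has to be in place before the Kolmogorov argument can be run.
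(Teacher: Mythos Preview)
Your proof follows essentially the same route as the paper's: the same decomposition of $v=u^{(K')}-u^{(K)}$ into the tail integral over $[-K',-K]$ and the renewal term over $[-K,t]$, the same BDG/Minkowski bounds on each piece, the same midpoint splitting to show that convolution against $(1+t-s)^{-\kappa/2}$ preserves the rate $(1+t+K)^{(2-\kappa)/2}$, and the same upgrade to locally uniform convergence via a uniform-in-$K$ spatial H\"older estimate combined with Kolmogorov/interpolation. Two minor remarks: for the spatial H\"older bound you need a \emph{spatial} heat-kernel increment (the paper quotes \cite[Lem.~3.1]{chen2019comparison}), not the time increment \eqref{e:boundDiffHeatTime}; and your truncation to $M_T$ before letting $T\to\infty$ is if anything cleaner than the paper, which works directly with the full supremum $M_p$ and tacitly relies on its a priori finiteness.
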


\begin{proof}
Without loss of generality, in the proof we assume that $0\le K<K'$.
Set
\begin{equ}[e:defMp]
	M_p := \sup_{x \in \R^d}\sup_{t > -K} (t+K+1)^{\f{\kappa -2}4} \| u^{(K')}(t,x) - u^{(K)}(t,x) \|_p\;.
\end{equ} 
For  $t > -K$, we have:
\begin{equs}
	u^{(K')}(t,x) - u^{(K)}(t,x) 
	& =\beta \int_{-K}^{t} \int_{ \R^d} P_{t-s}(x-y) [\sigma(u^{(K')}(s,y) ) - \sigma(u^{(K)}(s,y) )] \xi(ds, dy)\\
	& \quad +  \beta \int_{-K'}^{-K} \int_{ \R^d} P_{t-s}(x-y) \sigma(u^{(K')}(s,y) ) \xi(ds, dy). 
\end{equs}
Thus, by the Burkholder--Davis--Gundy inequality (\ref{BDG}), for a universal constant $c_p$,
\begin{equs}
	&\|u^{(K')}(t,x) - u^{(K)}(t,x) \|_{p}\\
	&\leq c_p\beta  \Big\|  \int_{-K}^t\int_{\R^{2d}} \Bigl( \prod_{j=1}^2 P_{t-s} (x-y_j) [\sigma(u^{(K')}(s,y_j) ) - \sigma(u^{(K)}(s,y_j) )] \Bigr) R(y_1 -y_2) dy ds   \Big\|_{\frac p 2}^{\f 1 2}\\
	&  \quad + c_p\beta\Big\|  \int_{-K'}^{-K}\int_{\R^{2d}} \Bigl( \prod_{j=1}^2 P_{t-s} (x-y_j)  \sigma(u^{(K')}(s,y_j) ) \Bigr) R(y_1 -y_2) dy ds  \Big\|_{\frac p 2}^{\f 1 2}.
\end{equs}
Denoting the two terms of the type $\|\ldots\|_{\f p2}^{\f12}$ as $A_1$ and $A_2$ respectively, 
we rewrite this bound as
\begin{equs}\label{uk}
	\|u^{(K')}(t,x) - u^{(K)}(t,x) \|_{p} \leq c_p\beta( A_1(t,x)+A_2(t,x)).
\end{equs}
By the Minkowski and H\"{o}lder inequalities, using \eqref{eq:time_decay_est} in the last step, we have 
\begin{equs}
	A_1(t,x)&\leq \Big( \int_{-K}^t\int_{\R^{2d}}  \Bigl(\prod_{j=1}^2 P_{t-s} (y_j-x) \| \sigma(u^{(K')}(s,y_j) ) - \sigma(u^{(K)}(s,y_j) )\|_p \Bigr) R(y_1 -y_2) dy ds \Big)^{\frac 1 2}
	\\
	&\leq |\sigma |_{\Lip}   \sup_{s>-K} (s+K+1)^{\f{\kappa -2}4}\sup_{y}\| u^{(K')}(s,y)  - u^{(K)}(s,y) \|_p  \times\\
	&\qquad  \Big( \int_{-K}^t (s+K+1)^{-\f{\kappa -2}2}\ \int_{\R^{2d}} P_{t-s} (y_1)  P_{t-s} (y_2) R(y_1 -y_2) dy ds \Big)^{\frac 1 2}   	\\
	&\leq |\sigma |_{\Lip} \,M_p  \Bigl( C_R \int_{-K}^t  (s+K+1)^{-\f{\kappa -2}2}(1+t-s)^{-\f\kappa2}  ds \Bigr)^{\frac 1 2}.
\end{equs}
 We split the integration region $[-K, t]$ into two equal parts.
On the interval $[-K,\f{ t-K} 2]$, we have
$t-s\ge \f{t+K}{2}$, so that
\begin{equ}
	\int_{-K}^{\f{ t-K} 2} (s+K+1)^{-\f{\kappa -2}2} (1 + t -s )^{-\frac \kappa 2 } ds
	\leq 2^{\f \kappa 2 -1}  (1+t+K)^{-\frac \kappa 2 } (t+K)\le 2^{\f \kappa 2 -1}  (t+K+1)^{1- \f \kappa 2}\;,
\end{equ}
while on $[\f{ t-K} 2, t]$ we have $s+K \ge \f{t+K}2$, so that
\begin{equ}
	\int_{\f{ t-K} 2}^{t} (s+K+1)^{-\f{\kappa -2}2} (1 + t -s )^{-\frac \kappa 2 } ds
	\le 2^{\f \kappa 2 -1}   (1+t+K)^{1-\f{\kappa}2}\int_{-\infty}^{t} (1 + t -s )^{-\frac \kappa 2 } ds\;.
\end{equ}
Combining both bounds, we obtain\footnote{The implicit constant is given by $ 2^{\f {\kappa -2} 4} (1+ \sqrt {c_\kappa}) |\sigma |_{\Lip} \sqrt {C_R}$, where $c_\kappa = \int_{0}^{\infty} [1 \wedge r^{-\f \kappa 2 }]  dr$ as before.}
\begin{equ}
	A_1(t,x)\lesssim   \; M_p\; (1 + t + K)^{\f{2 - \kappa}4} \;.
\end{equ}
Similarly, given that $\sup_{s, y,K'}\| \sigma(u^{(K')}(s, y)) \|_p$ is uniformly bounded  for $\beta<\beta_0(p)$ 
by a constant $c({\beta_0})$ (cf. \eqref{eq:moment_sigma_u} below), the second term is controlled up to a constant by 
\begin{equ}
	A_2 (t,x)\lesssim c({\beta_0}) \Big( \int_{-K'}^{-K}  (1 + t -s )^{-\frac \kappa 2 } ds \Big)^{\frac 1 2}
	\lesssim
	c({\beta_0}) (1 + t + K)^{\f{2 - \kappa}4}. 
\end{equ} 
Returning to (\ref{uk}) with these estimates, rearranging $(1 + t + K)^{\f{2 - \kappa}4}$, and taking suprema in $x$ in $t > -K$ of both sides, we obtain that for some constants $C_1$ and $C_2$\footnote{Where this time $C_1 = c_p 2^{\f {\kappa -2} 4}  (1+ \sqrt {c_\kappa}) |\sigma |_{\Lip} \sqrt {C_R}$, thus can fix $\beta_1 \le \f  1 {2C_1}$.  }, 
\begin{equs}
	M_p \leq \beta C_2 + \beta C_1  M_p. 
\end{equs}
For sufficiently small $\beta$, 
we have $M_p \leq \f{\beta C_2}{1-\beta C_1}\le 2 \beta C_2$. When combined with \eqref{e:defMp}, this
implies (\ref{eq:Lp_cvg_rate_pull_back}). This shows in particular that there exists a space-time stationary field $\vec Z(t,x)$ such that, for every $(t,x)$, 
$	\vec Z(t,x) = \lim_{K \to \infty} u^{(K)}(t, x)$ in probability.

To show that this $L^p$ convergence is locally uniform in $x$, we make use of \cite[Prop.~3.12]{MR4499013}: combined with Kolmogorov's criterion and a simple interpolation argument, it suffices to show that there exists some $\delta > 0$ such that, for every $p \ge 1$,
\begin{equ}
	\|u^{(K)}(t, x_1)-u^{(K)}(t, x_2)\|_{p} \lesssim |x_1-x_2|^\delta\;,
\end{equ}
uniformly over $K$, $t$, and $|x_1-x_2| \le 1$. Since our bounds are uniform,
we can restrict ourselves to the case $K=0$. Letting $G_{x_1, x_2}(s, y) = P_{s} (x_1 - y) - P_{s} (x_2 - y)$,  we have 
\begin{equs}
	\|u(t, x_1)-u(t, x_2)\|_{p} & = \Big\|  \int_{0}^{t} \int_{\R^{d}} G_{x_1, x_2}(t-s, y) \sigma(u(s, y)) \xi(ds, dy) \Big\|_p  \\
	& \lesssim \Big( \int_{0}^t\int_{\R^{2d}} \Bigl(\prod_{i=1}^{2} G_{x_1, x_2}(t-s, y_i) \| \sigma(u(s, y_i)) \|_p\Bigr)   R(y_1 - y_2) dy ds \Big)^{\f 1 2}.
\end{equs}
By \cite[Lem.~3.1]{chen2019comparison}, for  $\delta \in (0,1)$, we have 
\begin{equ}
	G_{x_1, x_2}(t-s, y) \lesssim (t-s)^{-\f \delta 2 } ( P_{2(t-s)} (x_1 - y) +  P_{2(t-s)} (x_2 - y) ) |x_1 -x_2|^{\delta}. 
\end{equ}
Then, since $\| \sigma(u(s, y)) \|_p$ is uniformly bounded,  we obtain from \eqref{eq:time_decay_est} the bound
\begin{equs}
	&\|u(t, x_1)-u(t, x_2)\|_{p} \\
	& \quad \lesssim 
	|x_1 -x_2|^{\delta}  \sum_{i, j = 1}^2 \Big( \int_{0}^t (t-s)^{- \delta }  \int_{\R^{2d}}  P_{2(t-s)} (x_i - y_1) P_{2(t-s)} (x_j - y_2)  R(y_1 -y_2) dy ds\Big)^{\f 1 2} \\
	& \quad \lesssim  |x_1 -x_2|^{\delta}  \Big( \int_{0}^t (t-s)^{- \delta }  (1 \wedge (t-s)^{- \f \kappa 2}) ds \Big)^{\f 1 2} \lesssim |x_1 -x_2|^{\delta}\;,
\end{equs}
as desired.
\end{proof}

\begin{remark}
We remark that the restrictions required on $\beta_0$ and $\beta_1$ are such that $\beta_1 = a_\kappa \beta_0$, for some constant $a_\kappa > 0$. Both are inversely proportional to $|\sigma|_\Lip$. 
\end{remark}
\begin{remark}
We expect to get the same limiting stationary process also if we start with an
initial condition $\phi$ that merely averages to $1$ in the sense that
$\lim_{t\to\infty} P_t \phi = 1$ in a suitable sense.
\end{remark}

We now proceed to obtain decorrelation estimates. With the moment bounds of Lemma~\ref{lem:moments} in place, we can show that $u(t,x) \in \mathbb{D}^{1,2}$. Moreover, using the Clark--Ocone formula and \eqref{eq:control_space}, we show that solutions decorrelate at rate $|x|^{2-\kappa}$ for large $x$.

\begin{lemma}\label{lem:Du_L2+cov_estimate} 
	Let $d\geq 3$,  $\sigma$ be Lipschitz continuous, and suppose that $\xi$ satisfies Assumption~\ref{assump-noise}.
	Let $f: \R\to \R$ be a Lipschitz function. For $ \beta< \beta_0(p)$,   
	\begin{equ}	\label{eq:moment_sigma_u}
		\sup_{s \geq 0, x \in \R^d} \| f (u(s,x))\|_p  \lesssim 1.
	\end{equ}
	For any $ \beta< \beta_0(2)$,  the following holds uniformly in  $x \in \R^d$,
	\begin{equ}
		\sup_{s>0} \cov\left( f\left(  u(s, x) \right) , f\left(  u(s, 0) \right)  \right) \leq C |f|_{\Lip}^2 \left( 1 \wedge |x|^{2 -\kappa}\right).
		\label{eq:cov_space_estimate}	
	\end{equ}
\end{lemma}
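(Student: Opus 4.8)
The first bound \eqref{eq:moment_sigma_u} is immediate from the moment estimate \eqref{eq:moment_u} of Lemma~\ref{lem:moments}: since $f$ is Lipschitz one has $|f(u(s,x))| \le |f(0)| + |f|_{\Lip}|u(s,x)|$, whence $\|f(u(s,x))\|_p \le |f(0)| + |f|_{\Lip}\sup_{s,x}\|u(s,x)\|_p \lesssim 1$ for $\beta < \beta_0(p)$.

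For the covariance estimate \eqref{eq:cov_space_estimate} I would use the Clark--Ocone formula, as announced in the text. Since $u(s,x) \in \bD^{1,2}$ by Lemma~\ref{lem:moments} and $f$ is Lipschitz, $f(u(s,x)) \in \bD^{1,2}$ with $D_{r,z}f(u(s,x)) = \Sigma_f(s,x)\,D_{r,z}u(s,x)$ for some bounded adapted process $\Sigma_f$ satisfying $|\Sigma_f| \le |f|_{\Lip}$, exactly as for $\sigma$ in the discussion preceding Lemma~\ref{lem:moments}; combined with \eqref{eq:moment_Du} this gives $\|D_{r,z}f(u(s,x))\|_2 \lesssim |f|_{\Lip}\,P_{s-r}(x-z)$ for $0<r<s$. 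Writing $F = f(u(s,x))$ and $G = f(u(s,0))$, the Clark--Ocone representation reads $F = \E F + \int_0^s\int_{\R^d}\E[D_{r,z}F\mid\F_r]\,\xi(dr,dz)$, the integral being an It\^o integral because the integrand is adapted, and similarly for $G$. Applying the It\^o isometry for the spatially coloured noise and then passing to absolute values (recall $R$ need not be pointwise nonnegative),
\[
|\cov(F,G)| \le \int_0^s\int_{\R^{2d}}\bigl|\E\bigl[\E[D_{r,z_1}F\mid\F_r]\,\E[D_{r,z_2}G\mid\F_r]\bigr]\bigr|\;|R(z_1-z_2)|\,dz_1\,dz_2\,dr\;.
\]
By conditional Jensen and Cauchy--Schwarz the inner expectation is at most $\|D_{r,z_1}F\|_2\|D_{r,z_2}G\|_2 \lesssim |f|_{\Lip}^2\,P_{s-r}(x-z_1)P_{s-r}(z_2)$; after the substitution $r\mapsto s-r$ and extending the time integral to $[0,\infty)$, estimate \eqref{eq:control_space} of Lemma~\ref{lem:gaussian_cov_est} bounds the right-hand side by $C_R|f|_{\Lip}^2(1\wedge|x|^{2-\kappa})$, uniformly in $s$. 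Since only the case $p=2$ of \eqref{eq:moment_Du} enters, the hypothesis needed is $\beta<\beta_0(2)$.

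The quantitative content is thus carried entirely by \eqref{eq:moment_Du} and \eqref{eq:control_space}; the only points requiring care are the standard justifications that $f(u(s,x))\in\bD^{1,2}$ with the stated chain rule when $f$ is merely Lipschitz (obtained by mollifying $f$ and invoking \cite[Prop.~1.2.4]{nualart2006malliavin} together with the closability of $D$), and that the Clark--Ocone formula is valid in this white-in-time, coloured-in-space setting with the Skorokhod integral of the adapted integrand $\E[D_\cdot F\mid\F_\cdot]$ reducing to an It\^o integral to which the isometry applies. I do not expect a genuine obstacle here.
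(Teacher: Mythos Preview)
Your proposal is correct and follows essentially the same route as the paper: the first bound via $|f(u)| \le |f(0)| + |f|_{\Lip}|u|$ and \eqref{eq:moment_u}, and the covariance bound via Clark--Ocone, It\^o isometry, Cauchy--Schwarz with conditional Jensen, the pointwise estimate \eqref{eq:moment_Du}, and finally \eqref{eq:control_space}. The paper's proof is slightly terser on the justification of the Lipschitz chain rule and the applicability of Clark--Ocone in this setting, but the argument is the same.
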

\begin{proof}
	Recall that $u(t,x)$ has uniform $L^p$ bounds by Lemma~\ref{lem:moments}. 
	Noting that $\| f (u(s,x))\|_p \leq  |f(0)| +| f |_\Lip \|u(s,x)\|_p$, the first inequality follows. 
	For the second bound, we note  $D_{r,z}f(u(t,x)) = \Sigma(t,x)\, D_{r,z}u(t,x)$, for some $\Sigma(t,x)$ bounded by $|f|_{\Lip}$. 
	We apply the following Clark--Ocone formula
	(see eg. \cite[Prop 6.3]{chen2019spatial} for a proof): for any $X \in \bD^{1,2}$, 
	\begin{equ}\label{eq:Clark--Ocone}
		X - \E [X] = \int_{\R_+ \times \R^d} \E [ D_{r,z} X | \F_r] \xi(dr,dz)\;.
	\end{equ}
	Applying this to $f(u(t,x))$, we obtain
	\begin{equ}
		f(u(t,x)) - \E f(u(t,x)) = \int_{0}^{\infty}\int_{\R^d} \E[ \Sigma(t ,x) D_{r,z}u(t,x) | \F_r] \xi(dr,dz).
	\end{equ}
	By It\^o's isometry, followed by the Cauchy--Schwarz inequality and (conditional) Jensen's inequality,
	we have 
	\begin{equs}
		{} & \cov\left( f(u(t,x)), f(u(t,0))\right) = \\
		& \qquad  \int_{0}^{\infty}\int_{\R^d} \E \big(
		\E[ \Sigma(t,x) D_{r,z_1}u(t,x) | \F_r] \,
		\E[ \Sigma(t,0) D_{r,z_2}u(t,0) | \F_r]
		\big)  R(z_1 -z_2) dz_1 dz_2 dr\\
		& \qquad \leq |f|_{\Lip}^2 \int_{0}^{t}\int_{\R^d} 
		\| D_{r,z_2}u(t,x) \|_2\| D_{r,z_2}u(t,0) \|_2 \big|R(z_1 -z_2)\big| dz_1 dz_2 dr.
	\end{equs}
	Then \eqref{eq:cov_space_estimate} follows by using again the $L^2(\Omega)$ estimate \eqref{eq:moment_Du} on $Du$ and \eqref{eq:control_space},
	\begin{equs}
		\cov\left( f(u(t,x)), f(u(t,0))\right) &\lesssim |f|_{\Lip}^2
		\int_{0}^{t}\int_{\R^{2d}}  
		P_{t-r} (x-z_1) P_{t-r}(z_2) \big|R(z_1 -z_2)\big| dz_1 dz_2 dr \\
		&\lesssim  |f|_{\Lip}^2  (1 \wedge |x|^{2-\kappa}),  
	\end{equs}
	concluding the proof of the second statement.
\end{proof}

\begin{remark}\label{rem:decorrelation} 
\begin{enumerate}
\item 	
For the linear stochastic heat equation,  the Feynman--Kac formula yields
\begin{equ}
	\cov ( u_\epsilon(t,x), u_\epsilon(t,0) ) = \E_B \exp\Big({\frac{\beta^2}{2}\int_{0}^{\frac{2t}{\epsilon^2}} R(\tfrac{x}{\epsilon} + B_r)  dr}\Big) -1. 
\end{equ} 
Here $\E_B$ denotes expectation with respect to an independent Brownian motion $B$.
Using Portenko's lemma \cite{portenko}, 	
then their covariance converges to $0$  as long as the following  holds
\begin{equ}
	\E_B\biggl[  \int_{0}^{\frac{2t}{\epsilon^2}} R(\tfrac{x}{\epsilon} + B_r)  dr \biggr] \leq \E_B\left[  \int_{0}^{\infty} \big| R(\tfrac{x}{\epsilon} + B_r)\big|  dr \right] = \int_{ \R^d} \frac{|R(z)|}{| \tfrac{x}{\epsilon} -z|^{d-2}} \, dz \to 0.
\end{equ}
\item By \eqref{eq:cov_space_estimate}, we see that 
\begin{equ}
	\cov (u_\epsilon(t,x), u_\epsilon(t,0)) \lesssim \epsilon^{\kappa -2} |x|^{2 -\kappa}  \to 0. 
\end{equ}
Since $\E[u_\epsilon]=1$, this immediately implies the law of large numbers:
\begin{equ}\label{eq:LLN}
	\int_{\R^d} u_\epsilon(t,x) g(x) dx \xrightarrow{\,\,\mathbb{P} \,\,} \int_{\R^d} g(x) dx.
\end{equ}
\item The asymptotic stationary field also decorrelates, $\cov( Z(\tfrac x \epsilon), Z(0)) \approx~0$.
\end{enumerate}		
\end{remark}

\section{Proof of Theorem~\ref{thm:basic-convergence}}\label{sec:proof-thm1}

Throughout this section, we assume $\beta < \beta_0$, where $\beta_0 >0$ is taken sufficiently small so that Theorem~\ref{thm:stat_field} and Lemma~\ref{lem:Du_L2+cov_estimate} hold, and we can apply Lemma~\ref{lem:moments} in the arguments below to 
estimate moments.\footnote{Keeping track of the various constraints, 
 one finds that it suffices to set $\beta_0 = \beta_0(4) \wedge \beta_1(2)$ with $\beta_0(p)$ as in Lemma~\ref{lem:moments} 
	and $\beta_1(2)$ as in Theorem~\ref{thm:stat_field}.}
Our main tool is the following result \cite[Prop.~2.3]{huang18-2020Stein_CLT_mshe}:
\begin{proposition} \label{prop:multi_Stein}
	Let $X = (X_1, \dots , X_n)$ be a random vector such that $X_i \in \bD^{1,2}$ and $X_i = \delta(v_i)$ for $v_i \in \text{Dom} \, \delta$, $i=1, \dots n$. Let $Z$ be an $n$-dimensional centered Gaussian vector with covariance matrix $(\Sigma_{i,j})_{1 \leq i,j \leq n}$. Then, for any function $h: \R^n \to \R$ in $\C_b^2(\R^n)$, letting 
	$\|D^2 h\|_\infty = \max_{1\leq i,j\leq n} \sup_{x \in \R^n}| \partial_{x_i} \partial_{x_j}  h (x)|$, we have
	\begin{equ}
		| \E h(X)-\E h(Z)|
		\le \frac 1 2 \|D^2 h\|_\infty \; \sqrt{
			\sum_{i,j=1}^{n} \E [ ( \Sigma_{i,j} - \<DX_i, v_j\>_\cH   )^2]}.
	\end{equ}
\end{proposition}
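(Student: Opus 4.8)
The plan is to obtain this as an instance of the Malliavin--Stein method for multivariate Gaussian approximation. First I would set up the Stein equation for the target law $\cN(0,\Sigma)$: writing $(P_t)_{t\ge 0}$ for the associated Ornstein--Uhlenbeck semigroup, $P_t\phi(x)=\E[\phi(e^{-t}x+\sqrt{1-e^{-2t}}\,Z)]$, with generator $\L\phi(x)=\sum_{i,j}\Sigma_{i,j}\,\partial_i\partial_j\phi(x)-\langle x,\nabla\phi(x)\rangle$, I would take $f=f_h:=-\int_0^\infty\big(P_t h-\E h(Z)\big)\,dt$. Using that $h\in\C_b^2(\R^n)$, one checks that this integral converges, that $\L f=h-\E h(Z)$, and that $f\in\C^2$ with
\[
\partial_i\partial_j f(x)=-\int_0^\infty e^{-2t}\,\E\big[\partial_i\partial_j h(e^{-t}x+\sqrt{1-e^{-2t}}\,Z)\big]\,dt,\qquad \sup_{x}\,|\partial_i\partial_j f(x)|\le\tfrac12\|D^2 h\|_\infty .
\]
Evaluating the Stein identity at $X$ then gives
\[
\E h(X)-\E h(Z)=\E\Big[\textstyle\sum_{i,j}\Sigma_{i,j}\,\partial_i\partial_j f(X)\Big]-\E\Big[\textstyle\sum_i X_i\,\partial_i f(X)\Big].
\]

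Next I would rewrite the last expectation using the hypothesis $X_i=\delta(v_i)$. Since each $\partial_i f$ is globally Lipschitz with bounded gradient and each $X_j\in\bD^{1,2}$, the chain rule yields $\partial_i f(X)\in\bD^{1,2}$ with $D(\partial_i f(X))=\sum_j\partial_i\partial_j f(X)\,DX_j$, so the duality between $\delta$ and $D$ gives
\[
\E\big[X_i\,\partial_i f(X)\big]=\E\big[\delta(v_i)\,\partial_i f(X)\big]=\E\big[\langle v_i,D(\partial_i f(X))\rangle_\cH\big]=\E\Big[\textstyle\sum_j\partial_i\partial_j f(X)\,\langle v_i,DX_j\rangle_\cH\Big].
\]
Summing over $i$, substituting back, and relabelling the summation indices (using that $(\partial_i\partial_j f)_{i,j}$ and $\Sigma$ are symmetric), I arrive at the identity
\[
\E h(X)-\E h(Z)=-\,\E\Big[\textstyle\sum_{i,j}\partial_i\partial_j f(X)\,\big(\langle DX_i,v_j\rangle_\cH-\Sigma_{i,j}\big)\Big].
\]
Bounding $|\partial_i\partial_j f(X)|\le\tfrac12\|D^2 h\|_\infty$ and applying the Cauchy--Schwarz inequality (first over the finite index set $\{1,\dots,n\}^2$, then on the expectation) then produces a bound of the stated form.

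I expect the bulk of the work to lie not in these displayed manipulations but in verifying the analytic prerequisites that make each step legitimate: that the Stein solution $f$ is genuinely $\C^2$ with the claimed Hessian bound, which uses $h\in\C_b^2$ together with the smoothing property of the Ornstein--Uhlenbeck semigroup; and that $\partial_i f(X)\in\bD^{1,2}$, so that the $\delta$--$D$ duality applies, which rests on the chain rule for Lipschitz functions of $\bD^{1,2}$-vectors and on the standing hypothesis $X_i\in\bD^{1,2}$. (Alternatively, one may simply invoke \cite[Prop.~2.3]{huang18-2020Stein_CLT_mshe}, where these points are carried out in detail.)
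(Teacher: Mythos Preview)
Your proposal is correct and is the standard Malliavin--Stein argument for this bound. Note, however, that the paper does not give its own proof of this proposition: it is stated as a tool and attributed directly to \cite[Prop.~2.3]{huang18-2020Stein_CLT_mshe}, exactly the reference you mention at the end. So there is nothing to compare against beyond observing that your sketch reproduces the argument behind that cited result.
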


\begin{proof}[Proof of Theorem~\ref{thm:basic-convergence}]
	We follow the general strategy of \cite{gu20_nlmSHE}.	
Since $\CC^2$ functions with compact support are weak convergence determining over $\R^n$,
it follows immediately that, in the context of Proposition~\ref{prop:multi_Stein}, the convergence
	\begin{equ}\label{eq:aim_multi_conv}
		\<DX_i^\epsilon, v_j^\epsilon\>_\cH \longrightarrow \Sigma_{i,j} \quad \text{in}\,\, L^2(\Omega)\;,
	\end{equ}	
for a sequence $X_i^\eps = \delta(v_i^\eps)$ implies that $X^\eps$ converges in law to $Z$. 

In order to prove Theorem~\ref{thm:basic-convergence}, we then apply this result to
$X_i^\eps := X_{t_i}^{\eps, g_i}$, defined by (\ref{Xt}).  We also set
\begin{equ}
	Z_i \eqdef Z_{t_i}(g_i) =  \int_{\R^{d}} \cU (t_i, x) g_i(x) dx\;,
\end{equ}
where $\cU$ is as in Theorem~\ref{thm:basic-convergence}. By the definition of the mild solution, 
we have $ X_i^\epsilon  = \delta(v_i^\epsilon)$ with 
\begin{equ}[e:defvi]
	v_i^\epsilon(s,y) := v_{t_i}^{\epsilon,g_i}(s,y) =   \beta \epsilon^{1-\kappa/2} \1_{[0,\frac{t_i}{\epsilon^2}]} (s) \sigma(u(s,y)) \int_{\R^{d}} P_{\frac{t_i}{\epsilon^2} -s } ( \tfrac x \epsilon
	- y) g_i(x) dx\;,
\end{equ}
so it remains to show that \eqref{eq:aim_multi_conv} holds with $\Sigma_{i,j}=\E (Z_iZ_j)$.

Applying the commutation relation \eqref{eq:commutation_D_adjoint} to $D\delta (v_i^\epsilon)$, we obtain the decomposition 
$\<DX_i^\epsilon, v_j^\epsilon\>_{\cH} = A^{i,j}_{1, \epsilon} +  A^{i,j}_{2, \epsilon}$, where 
	\begin{equ}
		A^{i,j}_{1, \epsilon}  = \< v_i^\epsilon, v_j^\epsilon\>_{\cH} \;,\qquad
		A^{i,j}_{2, \epsilon}  =  \int_{-\infty}^\infty \int_{\R^{d}} \< Dv_i^\epsilon(r,z), v_j^\epsilon\>_\cH \;\xi(dr,dz)\;.
	\end{equ}
	A simple calculation shows that the first term is given by 
	\begin{equs}
		A^{i,j}_{1, \epsilon}  
		&=  {\beta^2} \int_0^{t_i \wedge t_j}
		\int_{\R^{4d}} \epsilon^{-\kappa} R(\tfrac{y_1 -y_2}{\epsilon}) 
		\Bigl( \prod_{m=1}^2 \sigma(u(\tfrac s {\epsilon^2}, \tfrac {y_m} \epsilon)) \Bigr)  \\
		& \hspace{2em} 
		P_{t_i-s} ( x_1 - y_1 ) P_{t_j-s } ( x_2 - y_2 ) g_i(x_1)  g_j(x_2)   \, dx  dy ds\;,
	\end{equs}
	where we used the change of variables
	$s \mapsto \frac s {\epsilon^2}$, $y_i \mapsto \frac{y_i}{\epsilon}$ and the fact that 
	$P_{\frac r {\epsilon^2} } (\tfrac x \epsilon) = \epsilon^d P_r (x)$.
	
	We now write
	\begin{equ}
		\| \<DX_i^\epsilon, v_j^\epsilon\>  - \Sigma_{i,j}\|_2 \leq \big\| \E A^{i,j}_{1, \epsilon} - \Sigma_{i,j} \big\|_2 + \var (A^{i,j}_{1, \epsilon})^{\frac{1}{2}} + \| A^{i,j}_{2, \epsilon} \|_2.
	\end{equ}
	We note that $\E [X^\epsilon_i X^\epsilon_j] = \E A^{i,j}_{1, \epsilon}$ so that $\big| \E A^{i,j}_{1, \epsilon} - \Sigma_{i,j} \big|$
	converges to $0$ by Lemma~\ref{lem:EA_1-covariance_convergence} below.
	
	Regarding the term $A^{i,j}_{2, \epsilon}$, it follows from \eqref{e:defvi} and the identity
	$D_{s,y} \sigma(u(r,z)) = \Sigma(r,z)D_{s,y}u(r,z)$ for a bounded, adapted function-valued process $\Sigma(r,\cdot )$, 
	that, setting
	\begin{equ} \label{eq:defn_A2-tilde}
		\tilde{A}^\tau_{2, \epsilon} (x, s, y) := \int_{s}^{\frac \tau {\epsilon^2}} \int_{\R^{d}} P_{\frac \tau {\epsilon^2} - r}  (\tfrac{x}{\epsilon} - z) \Sigma(r,z) D_{s,y} u(r,z) \xi(dr,dz)\;,
	\end{equ}
	one has the identity
	\begin{equs}
		A^{i,j}_{2, \epsilon}  & =
		\frac {\beta^2} {\epsilon^{\kappa+d}}  \int_{0}^{t_i \wedge t_j} 
		\int_{\R^{4d}} g_i(x_1) g_j(x_2)   \tilde{A}^{t_i}_{2, \epsilon} ( x_1, \tfrac s \epsilon, \tfrac {y_1} \epsilon) \times \\
		& \qquad \qquad \qquad \qquad  \sigma(u_\epsilon(s, y_2)) P_{t_j -s} (x_2 - y_2) R(\tfrac{y_1 -y_2}{\epsilon}) dy dx ds.
	\end{equs}

	The vanishing of $\var (A^{i,j}_{1, \epsilon})$ and $\| A^{i,j}_{2, \epsilon} \|_2$ as $\epsilon \to 0$ will be shown in 
	Proposition~\ref{prop:vanishing_terms_bounds}, concluding the proof of Theorem~\ref{thm:basic-convergence}. 
\end{proof}

It remains to prove Lemma~\ref{lem:EA_1-covariance_convergence} and Proposition~\ref{prop:vanishing_terms_bounds},
which is done in sections~\ref{sec:cov_proof1}  and
\ref{sec:proof_vanishing2} respectively.

\subsection{Convergence of  covariances} \label{sec:cov_proof1}

We drop superscripts, fix $f,g \in \Cc(\R^d)$ and fix $\tau,v >0$. The covariance $ \Sigma_{\tau,v}^{f,g}$ of the limiting terms $Z_\tau(f)$ and $Z_v(g)$ is 
\begin{equs}
	\Sigma_{\tau,v}^{f,g} &:= \beta^2 \nueff^2 J_{\tau, v} (f,g), \\
	J_{\tau, v} (f,g) &:= \int_0^{\tau \wedge v}
	\int_{\R^{4d}} |y_1 -y_2|^{-\kappa}
	P_{\tau-s} ( x_1 - y_1 ) P_{v-s } ( x_2 - y_2 ) f(x_1)  g(x_2) dx dy ds.
\end{equs}

We first observe the following bound on $J_{\tau, v} (f,g)$ in terms of $I_{\Gamma_0}$, defined at \eqref{I-Gamma0}.

\begin{lemma}\label{lem:J_bound_I_Gamma0}
	Let $f, g \in \Cc(\R^d)$, $\tau, v >0$, $d \geq 3$ and  $\lambda \in (0,1)$. Then, 
	\begin{equ}\label{eq:J_bound_I_Gamma0}
	J_{\tau, v} (f,g) \lesssim (\tau \wedge v)^{1-\lambda}\,  I_{\Gamma_0}(f,g; \lambda).
\end{equ}
\end{lemma}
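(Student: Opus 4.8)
The plan is to bound the time integral in $J_{\tau,v}$ using the pointwise estimate $P_r(x) \lesssim r^{-\lambda/2}|x|^{\lambda-d}$ from Lemma~\ref{lem:gaussian_time_bounds}, and then recognize what remains as $I_{\Gamma_0}$. First I would split the heat kernel factors: write $P_{\tau-s}(x_1-y_1) = P_{\tau-s}(x_1-y_1)^{1/2}\cdot P_{\tau-s}(x_1-y_1)^{1/2}$ is not quite the right move; instead the cleaner route is to apply Lemma~\ref{lem:gaussian_time_bounds} with exponent $\lambda$ to \emph{only one} of the two heat kernels, say keep $P_{v-s}(x_2-y_2)$ as is, and bound $P_{\tau-s}(x_1-y_1) \lesssim (\tau-s)^{-\lambda/2}|x_1-y_1|^{\lambda-d}$. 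But this is asymmetric and doesn't obviously produce $|x_2-y_2|^{\lambda-d}$; so the right move is to apply the bound to \emph{both} kernels, i.e.
\begin{equ}
P_{\tau-s}(x_1-y_1)\,P_{v-s}(x_2-y_2) \lesssim (\tau-s)^{-\lambda/2}(v-s)^{-\lambda/2}\,|x_1-y_1|^{\lambda-d}\,|x_2-y_2|^{\lambda-d}\;.
\end{equ}
Substituting this into the definition of $J_{\tau,v}$ decouples the spatial integral from the time integral: the spatial part becomes exactly $I_{\Gamma_0}(f,g;\lambda)$ (after noting $|f|,|g| \ge f,g$ up to dropping signs, since we only want an upper bound and the integrand is otherwise nonnegative), and the time part is $\int_0^{\tau\wedge v}(\tau-s)^{-\lambda/2}(v-s)^{-\lambda/2}\,ds$.

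The remaining task is to check that this time integral is bounded by a constant multiple of $(\tau\wedge v)^{1-\lambda}$. Without loss of generality assume $\tau \le v$, so $\tau\wedge v = \tau$ and on the integration range $[0,\tau]$ one has $v - s \ge \tau - s$, hence $(v-s)^{-\lambda/2} \le (\tau-s)^{-\lambda/2}$, giving
\begin{equ}
\int_0^{\tau}(\tau-s)^{-\lambda/2}(v-s)^{-\lambda/2}\,ds \le \int_0^\tau (\tau-s)^{-\lambda}\,ds = \frac{\tau^{1-\lambda}}{1-\lambda}\;,
\end{equ}
which is finite precisely because $\lambda \in (0,1)$. This yields \eqref{eq:J_bound_I_Gamma0}. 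Finiteness of $I_{\Gamma_0}(f,g;\lambda)$ itself is guaranteed by Example~\ref{lem:diagram0_finite} provided $\lambda < \kappa/2$, which is compatible with $\lambda \in (0,1)$ since $\kappa > 2$; but the statement of the lemma only asserts the bound, so I would not belabor this point beyond a parenthetical remark.

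The argument is essentially routine and I do not anticipate a serious obstacle; the only thing to be careful about is the choice of which estimate to apply to the heat kernels (applying $P_r(x)\lesssim r^{-\lambda/2}|x|^{\lambda-d}$ symmetrically to both factors, rather than trying to split a single kernel) so that the spatial integral matches the kernel structure of $\Gamma_0$ exactly, and the bookkeeping that $v-s \ge \tau-s$ on the relevant range so that the time integral converges with the required power $(\tau\wedge v)^{1-\lambda}$. One should also note that $\lambda < 1$ is exactly the condition making $\int_0^\tau (\tau-s)^{-\lambda}\,ds$ finite, which is why the hypothesis $\lambda \in (0,1)$ appears.
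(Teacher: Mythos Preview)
Your proposal is correct and follows essentially the same route as the paper: apply the pointwise bound $P_r(x)\lesssim r^{-\lambda/2}|x|^{\lambda-d}$ from Lemma~\ref{lem:gaussian_time_bounds} to both heat kernels, then use $\tau-s,\,v-s\ge (\tau\wedge v)-s$ on the integration range to reduce the time integral to $\int_0^{\tau\wedge v}(\tau\wedge v - s)^{-\lambda}\,ds$. The paper compresses this into a single line by writing $P_{\tau-s}(x_1-y_1)P_{v-s}(x_2-y_2)\lesssim (t-s)^{-\lambda}\prod_i|x_i-y_i|^{\lambda-d}$ with $t=\tau\wedge v$, but the content is identical.
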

\begin{proof}
	Let $t= \tau \wedge v$. For any fixed $\lambda \in (0,  1)$, with Lemma~\ref{lem:gaussian_time_bounds},
	we can bound the Gaussian kernels  as follows: 	
	\begin{equ} 
		P_{\tau-s} ( x_1 - y_1 ) P_{v-s } ( x_2 - y_2 ) \lesssim (t - s)^{-\lambda}  \,  \prod_{i=1}^2 |x_i -y_i|^{\lambda - d}\;,  
	\end{equ}
	and the claim follows by inserting this into the expression for $J_{\tau, v} (f,g)$ and performing the time integral.\end{proof}

We present one of the lemmas needed for proving the main theorem.
\begin{lemma}\label{lem:EA_1-covariance_convergence}
Assume the assumptions of Theorem~\ref{thm:basic-convergence} on $\kappa$, $R$, $\sigma$ and $\beta < \beta_0$.
	Let $f, g \in \Cc(\R^d)$ and for $\tau, v >0$ let
\begin{equ} \label{eq:defn_A12}
	\begin{aligned}
		A_{1, \epsilon} &
		=   {\beta^2} \int_0^{\tau \wedge v}
		\int_{\R^{4d}} \epsilon^{-\kappa} R(\tfrac{y_1 -y_2}{\epsilon}) 
		\Bigl( \prod_{m=1}^2 \sigma(u(\tfrac s {\epsilon^2}, \tfrac {y_m} \epsilon)) \Bigr) \times \\
		& \hspace{1em} 
		P_{\tau -s} ( x_1 - y_1 ) P_{v-s } ( x_2 - y_2 ) f(x_1)  g(x_2)   \, dx  dy ds.
		\end{aligned}
		\end{equ}
		 Then the following statements hold.
	\begin{itemize}
		\item For any $\lambda \in (0,1)$,  the following bound holds uniform in $\epsilon \in (0,1)$, 
		\begin{equ} \label{eq:EA_1_control}
			\E |A_{1,\epsilon}| \lesssim (\tau\vee v)^{1-\lambda} I_{\Gamma_0}(f,g; \lambda)\;.
		\end{equ}
		\item Furthermore,
		\begin{equ} \label{eq:EA_1_convergence}
			\lim_{\epsilon \to 0} \E \, A_{1,\epsilon} = \beta^2 \nueff^2 \, J_{\tau, v} (f,g) = \Sigma_{\tau,v}^{f,g}\;.
		\end{equ}
	\end{itemize}
\end{lemma}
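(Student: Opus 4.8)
The plan is to handle the two claims separately, but both rest on the same structure: $A_{1,\epsilon}$ is a space-time integral against the rescaled covariance $\epsilon^{-\kappa}R(\epsilon^{-1}(y_1-y_2))$, multiplied by a product of heat kernels and the two factors $\sigma(u(\tfrac s{\epsilon^2},\tfrac{y_m}\epsilon))$. For the uniform bound \eqref{eq:EA_1_control}, I would first move the absolute value inside, use $|\sigma(u)|\le |\sigma(0)| + |\sigma|_{\Lip}|u|$ together with the uniform moment bound \eqref{eq:moment_u} (valid since $\beta<\beta_0$) to get $\E|\sigma(u(\tfrac s{\epsilon^2},\tfrac {y_1}\epsilon))\sigma(u(\tfrac s{\epsilon^2},\tfrac {y_2}\epsilon))| \lesssim 1$ uniformly, via Cauchy--Schwarz and spatial stationarity. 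This reduces the problem to bounding
\begin{equ}
  \int_0^{\tau\wedge v}\int_{\R^{4d}} \epsilon^{-\kappa} |R(\tfrac{y_1-y_2}\epsilon)|\, P_{\tau-s}(x_1-y_1) P_{v-s}(x_2-y_2)\,|f(x_1)g(x_2)|\,dx\,dy\,ds\;.
\end{equ}
Then I would use the pointwise assumption $|\epsilon^{-\kappa}R(\epsilon^{-1}w)| \lesssim \epsilon^{-\kappa}(1+|w/\epsilon|)^{-\kappa} = (\epsilon+|w|)^{-\kappa}\le |w|^{-\kappa}$, which holds uniformly in $\epsilon\in(0,1)$ by Assumption~\ref{assump-noise}, to replace the rescaled covariance by $|y_1-y_2|^{-\kappa}$. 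The remaining integral is exactly of the form appearing in Lemma~\ref{lem:J_bound_I_Gamma0} (with $\tau\wedge v$ replaced by an upper bound $\tau\vee v$ after estimating the time integral crudely), giving $\lesssim (\tau\vee v)^{1-\lambda} I_{\Gamma_0}(f,g;\lambda)$, which is finite by Example~\ref{lem:diagram0_finite} since $\lambda\in(0,\tfrac\kappa2)$. This proves the first bullet.

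For the convergence \eqref{eq:EA_1_convergence}, I would take expectations first, so that $\E A_{1,\epsilon}$ involves $C(\tfrac s{\epsilon^2},\tfrac{y_1-y_2}\epsilon) := \E[\sigma(u(\tfrac s{\epsilon^2},0))\sigma(u(\tfrac s{\epsilon^2},\tfrac{y_1-y_2}\epsilon))]$ by spatial stationarity. The idea is that as $\epsilon\to0$: (i) the time argument $s/\epsilon^2\to\infty$ for each fixed $s>0$, so by Theorem~\ref{thm:stat_field} the process $u(\tfrac s{\epsilon^2},\cdot)$ converges in law (locally uniformly, in $L^p$) to the stationary field $Z$; (ii) the spatial argument $|y_1-y_2|/\epsilon\to\infty$ for $y_1\ne y_2$, and by the decorrelation estimate \eqref{eq:cov_space_estimate} the covariance $\cov(\sigma(u(\cdot,x)),\sigma(u(\cdot,0)))$ decays like $|x|^{2-\kappa}$, so $C(\tfrac s{\epsilon^2},\tfrac{y_1-y_2}\epsilon) \to |\E\sigma(Z)|^2 = \nueff^2$ for a.e.\ $(s,y_1,y_2)$; and (iii) $\epsilon^{-\kappa}R(\epsilon^{-1}(y_1-y_2))\to |y_1-y_2|^{-\kappa}$ pointwise by \eqref{ass:R_conv}. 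Combining these, the integrand in $\E A_{1,\epsilon}$ converges pointwise a.e.\ to $\beta^2\nueff^2 |y_1-y_2|^{-\kappa} P_{\tau-s}(x_1-y_1) P_{v-s}(x_2-y_2) f(x_1)g(x_2)$, whose integral is $\beta^2\nueff^2 J_{\tau,v}(f,g) = \Sigma_{\tau,v}^{f,g}$.

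To upgrade pointwise convergence to convergence of integrals, I would invoke dominated convergence: the uniform domination is furnished precisely by the bound established in the first bullet, namely $\epsilon^{-\kappa}|R(\epsilon^{-1}(y_1-y_2))|\,|C(\cdot)| \lesssim |y_1-y_2|^{-\kappa}$ (using the uniform $L^2$ moment bound for $C$), times the heat-kernel/test-function factors, and this dominating function is integrable by Example~\ref{lem:diagram0_finite}. The main obstacle is step (ii): making rigorous the claim that $C(\tfrac s{\epsilon^2},\tfrac{y_1-y_2}\epsilon)\to\nueff^2$, which requires both the large-time convergence to $Z$ (with the rate from Theorem~\ref{thm:stat_field} ensuring the $L^2$ limit of $\sigma(u)$ is $\sigma(Z)$, using Lipschitz continuity of $\sigma$) \emph{and} the spatial decorrelation of $Z$ itself, i.e.\ $\lim_{|x|\to\infty}\cov(\sigma(Z(x)),\sigma(Z(0))) = 0$ so that $\E[\sigma(Z(x))\sigma(Z(0))]\to(\E\sigma(Z))^2$; this mixing-type property follows by passing to the limit in \eqref{eq:cov_space_estimate}. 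One must also take care that the set where $y_1=y_2$, or where $x_i$ leave the supports of $f,g$, is null and does not affect the limit, and that the time variable $s=0$ is a null set so the failure of $s/\epsilon^2\to\infty$ there is harmless.
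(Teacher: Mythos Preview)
Your proposal is correct and follows the paper's approach closely: for the first bullet you bound $\E[\sigma(u)\sigma(u)]$ uniformly, replace $\epsilon^{-\kappa}|R(\cdot/\epsilon)|$ by $|\cdot|^{-\kappa}$, and invoke Lemma~\ref{lem:J_bound_I_Gamma0}; for the second you apply dominated convergence with the same majorant. The one place where you diverge is in justifying $C(\tfrac s{\epsilon^2},\tfrac{y_1-y_2}\epsilon)\to\nueff^2$. You route through the stationary field $Z$ (first $u\to Z$, then decorrelation of $Z$), which works but needs the uniform-in-$x$ $L^p$ rate from Theorem~\ref{thm:stat_field} to control the diagonal limit where both arguments diverge together. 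The paper instead splits $\E A_{1,\epsilon}=\Sigma^\epsilon+B^\epsilon$ directly, with $\Sigma^\epsilon$ carrying $[\E\sigma(u(\tfrac s{\epsilon^2},0))]^2$ (depending only on time, hence converging by Theorem~\ref{thm:stat_field}) and $B^\epsilon$ carrying $\cov(\sigma(u(\tfrac s{\epsilon^2},\tfrac{y_1-y_2}\epsilon)),\sigma(u(\tfrac s{\epsilon^2},0)))$, which is bounded \emph{uniformly in $s$} by $\epsilon^{\kappa-2}|y_1-y_2|^{2-\kappa}$ via \eqref{eq:cov_space_estimate} and hence vanishes pointwise. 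This decouples the two limits cleanly and avoids ever passing through $Z$ for the decorrelation step; your argument is valid but the paper's decomposition is more direct.
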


\begin{proof}
	Note that $I_{\Gamma_0}$ is finite by Example~\ref{lem:diagram0_finite}.
	By the definition of $A_{1,\epsilon}$, one has 
	\begin{equ} \label{eq:EA1epsilon}
		\begin{aligned}
			\E |A_{1, \epsilon}|&
			\le   {\beta^2} \int_0^{\tau \wedge v}
			\int_{\R^{4d}} \epsilon^{-\kappa} \big|R(\tfrac{y_1 -y_2}{\epsilon})\big| 
			\, \E \big[ \sigma(u(\tfrac s {\epsilon^2}, \tfrac {y_1} \epsilon))  \sigma(u(\tfrac s {\epsilon^2}, \tfrac {y_2} \epsilon)) \big] 
			\\
			& \hspace{2em} 
			P_{\tau-s} ( x_1 - y_1 ) P_{v-s } ( x_2 - y_2 ) |f(x_1)  g(x_2)|   \, dx  dy ds. \\
		\end{aligned}
	\end{equ}
	
	By the uniform $L^p$ bounds in \eqref{eq:moment_sigma_u},  $$\sup_{\epsilon, s,y_1, y_2} \E \big[\sigma(u(\tfrac s {\epsilon^2}, \tfrac {y_1} \epsilon))   
	\sigma(u(\tfrac s {\epsilon^2}, \tfrac {y_2} \epsilon))
	\big] < \infty.$$
	Since $\epsilon^{-\kappa} | R(\tfrac x \epsilon)| \lesssim |x|^{-\kappa}$ uniformly in $\epsilon$, it follows that 
	$	 \E |A_{1,\epsilon} | \lesssim \beta^2 J_{\tau, v}(|f|,|g|)$, so that
	\eqref{eq:EA_1_control} follows from Lemma~\ref{lem:J_bound_I_Gamma0}.

	To show \eqref{eq:EA_1_convergence}, we use the stationarity in space of $\{ u(t, x)\}_{x \in \R^d}$ 
	to write 
	$\E A_{1,\epsilon} = \Sigma^\epsilon + B^\epsilon$, where
	\begin{equs}
	\begin{split}
	\Sigma^\epsilon &:=  \beta^2   \int_0^{\tau \wedge v}
	\int_{\R^{4d}} \epsilon^{-\kappa} R(\tfrac{y_1 -y_2}{\epsilon}) 
	\, \E \big[ \sigma(u(\tfrac s {\epsilon^2}, 0))   \big]^2 \\
	& \hspace{8em} 
	P_{\tau-s} ( x_1 - y_1 ) P_{v -s } ( x_2 - y_2 ) f(x_1)  g(x_2)   \, dx  dy ds, 
	\end{split}
	\label{eq:multi_defn_Sigma_epsilon} \\[0.5em]
	\begin{split} 
	B^\epsilon  &:=  \beta^2  \int_0^{\tau \wedge v}
	\int_{\R^{4d}} \epsilon^{-\kappa} R(\tfrac{y_1 -y_2}{\epsilon})  \,
	\cov( \sigma(u(\tfrac s {\epsilon^2}, \tfrac {y_1-y_2} \epsilon)), \sigma(u(\tfrac s {\epsilon^2}, 0)) ) \\
	& \hspace{8em} 
	P_{\tau-s} ( x_1 - y_1 ) P_{v-s } ( x_2 - y_2 ) f(x_1)  g(x_2)   \, dx  dy ds.
	\end{split} \label{eq:multi_defn_B_epsilon}
	\end{equs}

	By \eqref{eq:moment_u}, we have $\E \big[ \sigma(u(\tfrac s {\epsilon^2}, 0) )   \big] \leq C$ uniformly over $s\geq 0$ and $\epsilon >0$. Furthermore by Theorem~\ref{thm:stat_field}, $u(t,x)$ converges in law as $t\to \infty$, so
	$\nueff : = \lim_{t \to \infty} \left| \E [ \sigma (u(t,0))] \right|$ is well defined and for every $s>0$, 
	\begin{equ} \label{eq:conv_eff_var}
		\lim_{\epsilon \to 0}\E \big[ \sigma(u(\tfrac s {\epsilon^2}, 0))   \big]^2
		= \nueff^2\;.
	\end{equ}	
	Similarly, using \eqref{eq:cov_space_estimate}, it follows that for any $ y_1 \neq y_2$, 
	\begin{equ}\label{eq:cov_est_epsilon}
		\cov\Big(\sigma(u(\tfrac s {\epsilon^2}, \tfrac {y_1- y_2} \epsilon)), \sigma(u(\tfrac s {\epsilon^2}, 0)) \Big) \leq C (1 \wedge \tfrac {\epsilon^{\kappa-2}} {|y_1 -y_2 |^{2-\kappa}}) \longrightarrow 0, \quad \text{as}\,\,  \epsilon\to 0\;,
	\end{equ}
	and this quantity is uniformly bounded, again by \eqref{eq:moment_u}.		
	Since $\epsilon^{-\kappa} |R(\tfrac x \epsilon)| \lesssim |x|^{-\kappa}$, it follows from \eqref{eq:conv_eff_var}-\eqref{eq:cov_est_epsilon} that the integrands of both \eqref{eq:multi_defn_Sigma_epsilon}
	and \eqref{eq:multi_defn_B_epsilon} are uniformly bounded by an integrable function.
	It then follows from Lebesgue's dominated convergence theorem that 
	\begin{equ} 
		B^\epsilon \longrightarrow 0, \qquad \Sigma^\epsilon \longrightarrow \beta^2 \nueff^2 J_{\tau, v} = \Sigma^{f,g}_{\tau,v}\;,	\end{equ}
	thus concluding the proof.
\end{proof}

\subsection{Convergence of residual terms}
The focus of this subsection is to provide estimates on
the residual terms $\var (A_{1, \epsilon})$ and $\| A_{2, \epsilon}\|_2$,
where  $A_{1, \epsilon}$ is given in \eqref{eq:defn_A12} and \begin{equ} \label{eq:defn_A12-2}
	\begin{aligned}
			A_{2, \epsilon} & = \epsilon^{-d}
		\beta^2
		\int_{0}^{\tau \wedge v} 
		\int_{\R^{4d}} \epsilon^{-\kappa} R(\tfrac{y_1 -y_2}{\epsilon})  \tilde{A}^{\tau}_{2, \epsilon} ( x_1, \tfrac s \epsilon, \tfrac {y_1} \epsilon) \sigma(u_\epsilon(s, y_2))
		\\
		& \hspace{8em} 
		P_{v -s} (x_2 - y_2)   f(x_1) g(x_2)  dy dx ds,
	\end{aligned}
\end{equ}
with $  \tilde{A}^{\tau}_{2, \epsilon}$  given by ({\ref{eq:defn_A2-tilde}). 

We set
$dx = \prod_{i=1}^4 dx_i$, $dy = \prod_{i=1}^4 dy_i$, $dz = \prod_{i=1}^2 dz_i$,
 For  $f, g :\R^d\to \R$, set
\begin{equs}\label{psi-fg}
\psi_{f,g}(x) &:= f(x_1) g(x_2) g(x_3)  f(x_4),\\ 
\varphi_{f,g}(x)& := f(x_1) g(x_2) f(x_3)  g(x_4),
\label{varphi-fg}\\
Q(x) &:= \int_0^\infty \int_{\R^{2d}} P_r(x - z_1) P_r(z_2) |R(z_1 -z_2)|\, dz\, dr.
\label{Q}
\end{equs}

We first make preliminary estimates.
\begin{lemma}\label{lem:res_terms_estimates_multi}
	For  any test functions $f, g :\R^d\to \R$, one has 
	\begin{equs}[2]
\var(A_{1, \epsilon})& \lesssim \epsilon^{\kappa -2}  t^{1-2\lambda} \sum_
{\substack{i=1,2\\ j= 3,4}}
\int_{\R^{8d}}   \Bigl(
\prod_{k=1}^4 |y_k - x_k|^{\lambda -d}   \Bigr)
|y_i - y_j|^{2-\kappa} \\
&  \hspace{10em}
|y_1- y_2|^{-\kappa} |y_3- y_4|^{-\kappa}  \, 	|\varphi_{f,g}(x)|  \, 
dx dy ds.
	\end{equs}
\end{lemma}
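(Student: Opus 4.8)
The plan is to expand $\var(A_{1,\epsilon})=\E\bigl[(A_{1,\epsilon}-\E A_{1,\epsilon})^2\bigr]$ into an integral over two copies of the variables in \eqref{eq:defn_A12}, say $(s,x_1,x_2,y_1,y_2)$ and $(s',x_3,x_4,y_3,y_4)$, and to reduce the stochastic part to the connected four-point covariance
\begin{equ}
\Theta_\epsilon(s,s';y)\eqdef\cov\Bigl(\textstyle\prod_{m=1}^2\sigma\bigl(u(\tfrac s{\epsilon^2},\tfrac{y_m}\epsilon)\bigr),\ \prod_{m=3}^4\sigma\bigl(u(\tfrac{s'}{\epsilon^2},\tfrac{y_m}\epsilon)\bigr)\Bigr)\;.
\end{equ}
Writing $A_{1,\epsilon}-\E A_{1,\epsilon}$ by centring the factor $\sigma(u)\sigma(u)$ inside \eqref{eq:defn_A12}, squaring, taking expectations, and then bounding $|\Theta_\epsilon|$, $|R(\tfrac{y_1-y_2}\epsilon)|$, $|R(\tfrac{y_3-y_4}\epsilon)|$ and the four heat kernels in absolute value, gives an upper bound for $\var(A_{1,\epsilon})$ whose integrand over $(s,s')\in[0,\tau\wedge v]^2$ and $(x,y)\in\R^{8d}$ is $\beta^4\epsilon^{-2\kappa}\bigl|R(\tfrac{y_1-y_2}\epsilon)R(\tfrac{y_3-y_4}\epsilon)\bigr|\,|\Theta_\epsilon(s,s';y)|\,P_{\tau-s}(x_1-y_1)P_{v-s}(x_2-y_2)P_{\tau-s'}(x_3-y_3)P_{v-s'}(x_4-y_4)\,|\varphi_{f,g}(x)|$. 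It then remains to control three ingredients: the rescaled covariance $\epsilon^{-\kappa}R(\tfrac\cdot\epsilon)$, the product of heat kernels together with the time integrals, and $\Theta_\epsilon$.

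For the first, Assumption~\ref{assump-noise} immediately yields $\epsilon^{-\kappa}|R(\tfrac x\epsilon)|\lesssim|x|^{-\kappa}$, producing the factors $|y_1-y_2|^{-\kappa}$ and $|y_3-y_4|^{-\kappa}$. For the heat kernels I would use the first bound of Lemma~\ref{lem:gaussian_time_bounds}, $P_{\tau-s}(x_1-y_1)\lesssim(\tau-s)^{-\lambda/2}|x_1-y_1|^{\lambda-d}$ and the three analogues, so that pairing the two kernels of each copy gives $(\tau-s)^{-\lambda/2}(v-s)^{-\lambda/2}\le(\tau\wedge v-s)^{-\lambda}$ and likewise in $s'$; for $\lambda\in(0,1)$ one then has $\int_{[0,\tau\wedge v]^2}(\tau\wedge v-s)^{-\lambda}(\tau\wedge v-s')^{-\lambda}\,ds\,ds'\lesssim(\tau\wedge v)^{1-2\lambda}\int_0^{\tau\wedge v}ds$. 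Writing $t=\tau\wedge v$, this produces precisely the prefactor $t^{1-2\lambda}$, the trailing $ds$, and the factors $\prod_{k=1}^4|x_k-y_k|^{\lambda-d}$ of the claim.

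The substantive step is the bound on $\Theta_\epsilon$, and here the strategy mirrors the proof of \eqref{eq:cov_space_estimate}. Setting $V=\prod_{m=1}^2\sigma(u(\tfrac s{\epsilon^2},\tfrac{y_m}\epsilon))$ and $W=\prod_{m=3}^4\sigma(u(\tfrac{s'}{\epsilon^2},\tfrac{y_m}\epsilon))$, both lie in $\bD^{1,p}$ for every $p$ (by Lemma~\ref{lem:moments} and the Lipschitz property of $\sigma$, using that the product of two Malliavin differentiable variables with all moments is again Malliavin differentiable). Applying the Clark--Ocone formula \eqref{eq:Clark--Ocone} to $V$ and to $W$ and using It\^o's isometry gives $\Theta_\epsilon=\int_{\R_+\times\R^{2d}}\E\bigl[\E[D_{r,z_1}V\,|\,\F_r]\,\E[D_{r,z_2}W\,|\,\F_r]\bigr]R(z_1-z_2)\,dz\,dr$, in which the $r$-integration is in fact confined to $(0,\min(s,s')/\epsilon^2)$, since $D_{r,\cdot}V$ vanishes for $r>s/\epsilon^2$ and $D_{r,\cdot}W$ for $r>s'/\epsilon^2$. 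Cauchy--Schwarz and conditional Jensen then give $|\Theta_\epsilon|\le\int_0^{\min(s,s')/\epsilon^2}\int_{\R^{2d}}\|D_{r,z_1}V\|_2\|D_{r,z_2}W\|_2\,|R(z_1-z_2)|\,dz\,dr$, while the Leibniz and chain rules for $D$, the bound $|\Sigma|\le|\sigma|_{\Lip}$, H\"older's inequality, the uniform moment bound \eqref{eq:moment_sigma_u} and the Malliavin estimate \eqref{eq:moment_Du} give $\|D_{r,z}V\|_2\lesssim\sum_{m=1,2}P_{s/\epsilon^2-r}(\tfrac{y_m}\epsilon-z)$ and similarly for $W$. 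Carrying out the two $z$-integrals by Gaussian convolution and Lemma~\ref{kernel-R}, one is left, for each $i\in\{1,2\}$ and $j\in\{3,4\}$, with $\int_0^{\min(s,s')/\epsilon^2}\bigl(1+|\tfrac{y_i-y_j}\epsilon|+\sqrt{(s+s')/\epsilon^2-2r}\bigr)^{-\kappa}dr$; bounding $(s+s')/\epsilon^2-2r\ge2(s/\epsilon^2-r)$ and substituting $\rho=s/\epsilon^2-r$, this is at most $\int_0^\infty\bigl(1+|\tfrac{y_i-y_j}\epsilon|+\sqrt{2\rho}\bigr)^{-\kappa}d\rho$, which — and this is the one place where $\kappa>2$ rather than merely $\kappa\in(0,d)$ is used — is finite and bounded by $(1+|\tfrac{y_i-y_j}\epsilon|)^{2-\kappa}\le\epsilon^{\kappa-2}|y_i-y_j|^{2-\kappa}$. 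Hence $|\Theta_\epsilon|\lesssim\epsilon^{\kappa-2}\sum_{i=1,2}\sum_{j=3,4}|y_i-y_j|^{2-\kappa}$, uniformly in $s,s'$, and combining this with the two ingredients above yields the stated bound, the $\epsilon^{-2\kappa}$ from the two $R$'s and the $\epsilon^{\kappa-2}$ from $\Theta_\epsilon$ combining into the net power $\epsilon^{\kappa-2}$ after the $|\cdot|^{-\kappa}$ estimates.

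I expect this last step — the bound on the connected four-point covariance $\Theta_\epsilon$ — to be the main obstacle. The difficulty is that $V$ and $W$ sit at the two distinct rescaled times $s/\epsilon^2$ and $s'/\epsilon^2$, so that the Malliavin derivatives appearing after Clark--Ocone are heat kernels with unequal time arguments, and one must retain precisely the right portion of the ensuing time integral to extract the decorrelation exponent $2-\kappa$ (not merely $-\kappa$) together with the decisive prefactor $\epsilon^{\kappa-2}$ and the four cross-pair kernels $|y_i-y_j|^{2-\kappa}$; the convergence of $\int_0^\infty(1+\sqrt{2\rho})^{-\kappa}d\rho$ is exactly what pins down the role of the hypothesis $\kappa>2$ and, ultimately, what makes $\var(A_{1,\epsilon})\to0$.
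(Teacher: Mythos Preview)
Your proof is correct and follows the same approach as the paper: expand the variance into two copies, apply the Clark--Ocone formula to the four-point covariance of the $\sigma(u)$ factors, bound the Malliavin derivatives via Lemma~\ref{lem:moments}, and extract the decorrelation factor $\epsilon^{\kappa-2}|y_i-y_j|^{2-\kappa}$. The paper packages the $r$-integral of the Gaussian--$R$ convolution as the function $Q$ of \eqref{Q} and invokes \eqref{eq:control_space}; your explicit computation via \eqref{eq:time_decay_est} and the $\rho$-substitution is the same argument unwrapped. Your treatment of the double time integral in $(s,s')$ is in fact more careful than the paper's writeup, which appears to carry a typo with both copies at the same time $s$ (hence the restriction $\lambda<\tfrac12$ in Proposition~\ref{prop:vanishing_terms_bounds}); either version suffices for the application.
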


\begin{proof} 
	We follow \cite{gu20_nlmSHE} and begin with
	\begin{equ}
		\begin{aligned}
			\var (A_{1, \epsilon}) &= 
			\beta^2 \int_0^{\tau \wedge v}
			\int_{\R^{8d}} \epsilon^{-\kappa} R(\tfrac{y_1 -y_2}{\epsilon})  \epsilon^{-\kappa} R(\tfrac{y_3 -y_4}{\epsilon}) \,
			\cov\Big( \Lambda_\epsilon (s, y_1, y_2), \Lambda_\epsilon (s, y_3, y_4) \Big)   \\
			& \hspace{1em} 
			\bigg(\prod_{k=1}^2  P_{\tau -s} ( x_k - y_k ) P_{v-s } ( x_{k+2} - y_{k+2}) \bigg) 
			f(x_1) g(x_2) f(x_3)  g(x_4)  \, dx  dy ds
		\end{aligned}
	\end{equ}
	where $
	\Lambda_\epsilon (s, y_1, y_2) := 
	\sigma(u(\tfrac s {\epsilon^2}, \tfrac {y_1} \epsilon))
	\sigma(u(\tfrac s {\epsilon^2}, \tfrac {y_2} \epsilon))$.
	By the Clark--Ocone formula, 
	\begin{equ}
		\begin{aligned}
			\cov\Big( \Lambda_\epsilon (s, y_1, y_2), \Lambda_\epsilon (s, y_3, y_4) \Big) &= 
			\int_0^{\frac s {\epsilon^2}}
			\int_{\R^{2d}} R(z_1 -z_2)  \,\E \Big[ \E [ D_{r,z_1} \Lambda_\epsilon (s, y_1, y_2) | \F_r|]   \\
			& \hspace{4em} \times \E [ D_{r,z_2} \Lambda_\epsilon (s, y_3, y_4) | \F_r|]  \Big]  dz dr.
		\end{aligned}
	\end{equ}
	Using the chain rule to expand $D_{r,z} \Lambda_\epsilon (s, y_1, y_2)$ and the  bounds \eqref{eq:moment_Du}-\eqref{eq:moment_sigma_u}, 
	\begin{equ}\label{eq:est_D_Lambda}
		\| D_{r,z_1} \Lambda_\epsilon (s, y_1, y_2) \|_2 \lesssim P_{\frac s {\epsilon^2} -r} (\tfrac {y_1} \epsilon - z) + P_{\frac s {\epsilon^2} -r} (\tfrac {y_2} \epsilon - z). 
	\end{equ}
	Then, we have
	\begin{equ}\label{eq:bound_cov_Lambda}
		\left| \cov\Big( \Lambda_\epsilon (s, y_1, y_2), \Lambda_\epsilon (s, y_3, y_4) \Big) \right| \lesssim \sum_
		{\substack{i=1,2\\ j= 3,4}} Q(\tfrac{y_i - y_j}{\epsilon}),
	\end{equ}
	thus concluding 	
		\begin{equs}[2]	 [eq:varA1_estimate]
		\var(A_{1, \epsilon})& \lesssim \sum_
		{\substack{i=1,2\\ j= 3,4}}
		\int_{0}^{\tau \wedge v} \int_{\R^{8d}} 
		\bigg(\prod_{k=1}^2  P_{ \tau -s} (y_k - x_k) P_{v -s} (y_{2+k} - x_{2+k})\bigg)  Q(\tfrac{y_i - y_j}{\epsilon}) 
		\\
		&  
		\epsilon^{-\kappa}\big|R(\tfrac {y_1- y_2} \epsilon)\big| \epsilon^{-\kappa}\big|R(\tfrac {y_3- y_4} \epsilon)\big| \, 	
		|\varphi_{f,g}(x)|  \, 
		dx dy ds.
		\end{equs}

Since $\epsilon^{-\kappa} |R(\tfrac{x}{\epsilon})| \lesssim |x|^{-\kappa}$, and by Lemma~\ref{lem:gaussian_cov_est},  we see that
\begin{equ}
	Q(\tfrac{x}{\epsilon}) \lesssim  1 \wedge \epsilon^{\kappa -2}  |x|^{2-\kappa} \lesssim \epsilon^{\kappa -2}  |x|^{2-\kappa}. 
\end{equ}
Plugging these estimates into \eqref{eq:varA1_estimate} and bringing time integral inside, we have 
\begin{align*}
\var(A_{1, \epsilon})& \lesssim \epsilon^{\kappa -2}  \sum_
{\substack{i=1,2\\ j= 3,4}}
\int_{\R^{8d}}   
\int_{0}^{\tau \wedge v} \bigg(\prod_{k=1}^2  P_{\tau-s} (y_k - x_k) P_{v-s} (y_{2+k} - x_{2+k}) \bigg)  ds  
\\
&  \hspace{3em}
|y_i - y_j|^{2-\kappa} |y_1- y_2|^{-\kappa} |y_3- y_4|^{-\kappa}  \, 	|\varphi_{f,g} (x)|
dx\, dy\, ds.
\end{align*}

 Let $t = \tau \wedge v$ and $\lambda \in (0, \frac 1 2)$. We use again the pointwise estimate \eqref{eq:comp_est} from Lemma~\ref{lem:gaussian_time_bounds}. For $r \in \{ \tau, v\}$, we have $r \geq t$, so that $P_{r-s} (x) \lesssim  (t-s)^{-\frac \lambda 2} |x|^{\lambda - d}$ uniformly in $t, s$ and $x \neq 0$. Hence, we estimate as follows
\begin{align*}
\var(A_{1, \epsilon})& \lesssim \epsilon^{\kappa -2}  \sum_
{\substack{i=1,2\\ j= 3,4}}
\int_{\R^{8d}}   \bigg(
\int_{0}^{t} (t-s)^{-2\lambda} \, ds  \bigg)    
\Bigl(\prod_{k=1}^4 |y_k - x_k|^{\lambda -d}  \Bigr)
\\
&  \hspace{2em}
|y_i - y_j|^{2-\kappa} \, 
|y_1- y_2|^{-\kappa} |y_3- y_4|^{-\kappa}  \, 	|\varphi_{f,g}(x)|  \, 
dx dy ds \\
&=\epsilon^{\kappa -2}  t^{1-2\lambda} \sum_
{\substack{i=1,2\\ j= 3,4}}
\int_{\R^{8d}}   \Bigl(
\prod_{k=1}^4 |y_k - x_k|^{\lambda -d}   \Bigr)
|y_i - y_j|^{2-\kappa} \\
&  \hspace{2em}
|y_1- y_2|^{-\kappa} |y_3- y_4|^{-\kappa}  \, 	|\varphi_{f,g}(x)|  \, 
dx dy ds,
\end{align*}
completing the proof.
\end{proof}

\begin{lemma}\label{lem:res_terms_estimates_multi-2}
	For  any test functions $f, g :\R^d\to \R$, one has 
	\begin{equs}
	\| A_{2, \epsilon} \|_2 
	&\lesssim \epsilon^{\frac \kappa 2 -1 } \sqrt t  \bigg[  \int_{\R^{10d}} 
	|\psi_{f,g}(x)| \, |y_1 - y_4|^{-\kappa}|z_1 - y_2|^{-\kappa} |z_2 - y_3|^{-\kappa} \\
	& \qquad 
	\bigg( \int_{0}^{t} P_{{v}-s} ( y_2 - x_2) P_{{v}-s} ( y_3 - x_3)  \int_s^{\tau} 
	P_{{\tau}-r} ( x_1 - y_1  ) P_{{\tau}-r} ( x_4 - y_4  ) 
	\\
	& \qquad
	P_{r-s} ( y_1 - z_1 )  P_{r-s} ( y_4 - z_2 )    
	dr ds \bigg)
	dy dz dx \bigg]^{\frac 1 2}.  
\end{equs}
\end{lemma}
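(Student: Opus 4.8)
The plan is to adapt to $A_{2,\epsilon}$ the argument used for $\var(A_{1,\epsilon})$ in \cite{gu20_nlmSHE}: after extracting the time integral one is left with the $L^2(\Omega)$-norm of a spatial integral, inside which the two stochastic integrals get paired by the It\^o isometry. First I would write $A_{2,\epsilon}=\int_0^{\tau\wedge v}G_\epsilon(s)\,ds$, with $G_\epsilon(s)$ the $\R^{4d}$-integral appearing in \eqref{eq:defn_A12-2} at fixed $s$. Minkowski's inequality followed by the Cauchy--Schwarz inequality in $s$ gives
\begin{equ}
	\|A_{2,\epsilon}\|_2\;\le\;\int_0^{\tau\wedge v}\|G_\epsilon(s)\|_2\,ds\;\le\;\sqrt{\tau\wedge v}\,\Bigl(\int_0^{\tau\wedge v}\|G_\epsilon(s)\|_2^2\,ds\Bigr)^{1/2}\;,
\end{equ}
which already produces the prefactor $\sqrt t$ with $t=\tau\wedge v$. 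It then suffices to bound $\|G_\epsilon(s)\|_2^2=\E[G_\epsilon(s)^2]$, uniformly in $s$, by a constant times $\epsilon^{\kappa-2}$ times the $s$-slice of the integral appearing in the statement.

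To estimate $\E[G_\epsilon(s)^2]$ I would expand the square, labelling the variables of the two copies by $(x_1,x_2,y_1,y_2)$ and $(x_4,x_3,y_4,y_3)$ — so that the variables feeding the $\tilde{A}^{\tau}_{2,\epsilon}$ factors carry $f$ and those feeding $\sigma(u_\epsilon(s,\cdot))$ carry $g$, which is why the product of test functions collapses to $\psi_{f,g}$ — and then push the expectation through the deterministic kernels by Fubini. The core quantity is $\E[\tilde{A}^{\tau}_{2,\epsilon}(x_1,\cdot,\tfrac{y_1}\epsilon)\,\sigma(u_\epsilon(s,y_2))\,\tilde{A}^{\tau}_{2,\epsilon}(x_4,\cdot,\tfrac{y_4}\epsilon)\,\sigma(u_\epsilon(s,y_3))]$. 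Here both $\tilde{A}^{\tau}_{2,\epsilon}$ factors, defined by \eqref{eq:defn_A2-tilde}, are It\^o integrals over one and the same time interval (their integrands are adapted in the integration variable by Lemma~\ref{lem:moments}), both starting at the same time, with respect to whose $\sigma$-field $\sigma(u_\epsilon(s,\cdot))$ is measurable. Conditioning on that $\sigma$-field, pulling out $\sigma(u_\epsilon(s,y_2))\sigma(u_\epsilon(s,y_3))$, and applying the It\^o isometry in its conditional, polarised form to the product of the two integrals reduces the expectation to a single deterministic integral over one time variable $r$ and two space variables $z_1,z_2$. Bounding $|\Sigma|\le|\sigma|_{\Lip}$, applying H\"older's inequality in $\Omega$ with four $L^4(\Omega)$-factors, and invoking the uniform bound \eqref{eq:moment_sigma_u} on $\|\sigma(u_\epsilon)\|_4$ and the pointwise bound \eqref{eq:moment_Du} on $\|D_\cdot u\|_4$ (both available since $\beta<\beta_0\le\beta_0(4)$) then produces
\begin{equ}
	\bigl|\E[\cdots]\bigr|\;\lesssim\;\int\!\!\int_{\R^{2d}}P_{\cdot}\bigl(\tfrac{x_1}\epsilon-z_1\bigr)\,P_{\cdot}\bigl(\tfrac{x_4}\epsilon-z_2\bigr)\,P_{\cdot}\bigl(z_1-\tfrac{y_1}\epsilon\bigr)\,P_{\cdot}\bigl(z_2-\tfrac{y_4}\epsilon\bigr)\,|R(z_1-z_2)|\,dz\,dr\;,
\end{equ}
the heat kernels carrying the appropriate microscopic times and the $r$-integral running over a microscopic interval.

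Finally I would substitute $r=\epsilon^{-2}r'$ and $z_i=\epsilon^{-1}z_i'$, use the scaling identity $P_{\epsilon^{-2}a}(\epsilon^{-1}b)=\epsilon^d P_a(b)$ for each of the four heat kernels, and use Assumption~\ref{assump-noise} in the form $\epsilon^{-\kappa}|R(\epsilon^{-1}\cdot)|\lesssim|\cdot|^{-\kappa}$ for each of the three covariance kernels present (the two coming from $A_{2,\epsilon}^2$ and the one produced by the It\^o isometry). Tallying the powers of $\epsilon$ — $\epsilon^{-2d}$ from the two prefactors in \eqref{eq:defn_A12-2}, $\epsilon^{4d}$ from the four heat kernels, $\epsilon^{-2d-2}$ from $dz\,dr$, and $\epsilon^{\kappa}$ from the innermost $R$ — leaves exactly $\epsilon^{\kappa-2}$ for $\|G_\epsilon(s)\|_2^2$, hence $\epsilon^{(\kappa-2)/2}=\epsilon^{\kappa/2-1}$ after the square root; a harmless relabelling of the dummy variables ($y_1\leftrightarrow z_1$, $y_4\leftrightarrow z_2$) then puts the result into the form displayed in the statement. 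I expect the one delicate step to be the probabilistic one — correctly setting up the conditional It\^o isometry for the product $\tilde{A}^{\tau}_{2,\epsilon}\,\tilde{A}^{\tau}_{2,\epsilon}$ weighted by the past-measurable factors $\sigma(u_\epsilon(s,\cdot))$ — since this is precisely what pairs the two stochastic integrals into a single integral in $(r,z_1,z_2)$ and keeps the number of auxiliary space variables equal to two; the remainder is routine heat-kernel bookkeeping and power counting.
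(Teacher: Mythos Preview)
Your proposal is correct and follows essentially the same route as the paper: Minkowski in $s$, then the polarised It\^o isometry with the $\F_{s/\epsilon^2}$-measurable weight $\sigma(u_\epsilon(s,\cdot))\sigma(u_\epsilon(s,\cdot))$, followed by H\"older with four $L^4$ factors and the moment bounds \eqref{eq:moment_Du}--\eqref{eq:moment_sigma_u}, then the change of variables and power counting. The only cosmetic difference is that you apply Cauchy--Schwarz in $s$ at the outset, whereas the paper obtains the same $\sqrt t$ at the end via Jensen's inequality with respect to $ds/t$; these are the same step, and your variable relabelling matches the paper's.
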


\begin{proof}
 Setting
	\begin{equs}
	B_\epsilon^{\tau} (s, x_1, x_1', y_1, y_1', y_2, y_2') 
		= \E \Big[ \tilde{A}^{\tau}_{2, \epsilon} ( x_1, \tfrac s \epsilon, \tfrac {y_1} \epsilon)
		\tilde{A}^{\tau}_{2, \epsilon} ( x_1', \tfrac s \epsilon, \tfrac {y_1'} \epsilon)
		\sigma(u_\epsilon(s, y_2)) 
		\sigma(u_\epsilon(s, y_2')) 
		\Big].
	\end{equs}
using Minkowski inequality, we obtain:
	\begin{equs}
		\| A_{2,\epsilon}\|_2 & \leq \epsilon^{-d} \int_{0}^{\tau\wedge v} \bigg(
		\int_{\R^{8d}} 
		B_\epsilon^{\tau} (s, x_1, x_1', y_1, y_1', y_2, y_2')
		P_{v-s} (x_2 -y_2) P_{v-s} (x_2' -y_2') \\
		& \hspace{2em}\epsilon^{-\kappa} \big| R(\tfrac{y_1 -y_2}{\epsilon})\big| \epsilon^{-\kappa} \big|R(\tfrac{y_1' -y_2'}{\epsilon})\big| f(x_1) g(x_2) f(x_1') g(x_2') dx dx' dy dy'
		\bigg)^{\frac 1 2} ds.	\end{equs}
	Recalling the definition \eqref{eq:defn_A2-tilde} of $\tilde{A}^\tau_{2, \epsilon}$,	
	first apply It\^{o} isometry, then estimate via H\"{o}lder inequality and the
	moment bounds on $Du$, $\sigma(u)$ from Lemma~\ref{lem:moments}, and finally perform a change of variables,
	\begin{align*}
	& |B_\epsilon^{\tau} (s, x_1, x_1', y_1, y_1', y_2, y_2')  | 
	\\
	& = 
	\bigg|  
	\int_{\frac s {\epsilon^2}}^{\frac \tau {\epsilon^2}} \int_{\R^{2d}} \E \big[
	\Sigma(r,z_1) \Sigma(r,z_2)  D_{\frac s {\epsilon^2}, \frac {y_1} \epsilon} u(r,z_1)
	D_{\frac s {\epsilon^2}, \frac {y_1'} \epsilon} u(r,z_2) \sigma(u(\tfrac s {\epsilon^2}, \tfrac {y_2} \epsilon )) \sigma(u(\tfrac s {\epsilon^2}, \tfrac {y_2'} \epsilon ))
	\big] \\
	& \hspace{4em} P_{\frac \tau {\epsilon^2} - r}  (\tfrac{x_1}{\epsilon} - z_1) 
	P_{\frac \tau {\epsilon^2} - r}  (\tfrac{x_1'}{\epsilon} - z_2) R(z_1 -z_2)\, dz dr
	\bigg|\\
	& \lesssim 
	\int_{\frac s {\epsilon^2}}^{\frac \tau {\epsilon^2}} \int_{\R^{2d}}  
	P_{r-\frac s {\epsilon^2} }  (z_1-\tfrac{y_1}{\epsilon}) P_{r- \frac s {\epsilon^2}}  (z_2-\tfrac{y_1'}{\epsilon} )
	P_{\frac \tau {\epsilon^2} - r}  (\tfrac{x_1}{\epsilon} - z_1) 
	P_{\frac \tau {\epsilon^2} - r}  (\tfrac{x_1'}{\epsilon} - z_2)\big| R(z_1 -z_2)\big|\, dz dr
	\\
	& = \epsilon^{\kappa-2+2d} \int_{ s }^{\tau} \int_{\R^{2d}}  
	P_{r-s}  (z_1-{y_1}) P_{r - s}  ( z_2 - {y_1'} )
	P_{\tau - r}  ({x_1} - z_1) 
	P_{\tau - r}  ({x_1'} - z_2) \epsilon^{-\kappa} \big|R(\tfrac {z_1 -z_2} \epsilon)\big|\, dz dr.
	\end{align*}
	Returning to $\| A_{2,\epsilon}\|_2$ and relabelling integrating variables\footnote{Namely, we relabel $(x_1, x_1', x_2, x_2')$ with $(x_1, x_4, x_2, x_3)$, and $( z_1, z_2, y_1, y_1', y_2, y_2')$ with $( y_1, y_4, z_1, z_2, y_2, y_3)$.} to conclude
		\begin{equs}[2]	
		\| A_{2, \epsilon} \|_2 &\lesssim \epsilon^{\frac \kappa 2 -1 } 	\int_{0}^{\tau \wedge v}  \bigg[ \int_s^{\tau} \int_{\R^{10d}} 
		P_{{\tau}-r} ( x_1 - y_1  ) P_{\tau-r} ( x_4 - y_4  )\\
		& \hspace{1em}  P_{r-s} ( y_1 - z_1 )  P_{r-s} ( y_4 - z_2 )  \,
		\epsilon^{-\kappa}\big|R(\tfrac {y_1- y_4} \epsilon)\big|
		\epsilon^{-\kappa}\big|R(\tfrac {z_1- y_2} \epsilon)\big|
		\epsilon^{-\kappa}\big|R(\tfrac {z_2 - y_3} \epsilon)\big|
		\\
		& \hspace{1em}  P_{{v}-s} ( y_2 - x_2) P_{{v}-s} ( y_3 - x_3) 	\,\big|\psi_{f,g}(x)\big| \,
		dy dz dx dr \bigg]^{\frac 1 2} ds.  \label{eq:A2_estimate}
	\end{equs}

	Since $\epsilon^{-\kappa} |R(\tfrac x \epsilon )| \lesssim |x|^{-\kappa}$, letting $t = v \wedge \tau$,  we have
\begin{equs}
	\| A_{2, \epsilon} \|_2 &\lesssim \epsilon^{\frac \kappa 2 -1 } 	\int_{0}^{t}  \bigg[ \int_s^{\tau} \int_{\R^{10d}}  |\psi_{f,g}(x)| \,
	P_{{\tau}-r} ( x_1 - y_1  ) P_{{\tau}-r} ( x_4 - y_4  ) \\
	& \hspace{6em}  P_{r-s} ( y_1 - z_1 )  P_{r-s} ( y_4 - z_2 )  \,
	|y_1 - y_4|^{-\kappa}|z_1 - y_2|^{-\kappa} |z_2 - y_3|^{-\kappa}  
	\\
	& \hspace{6em}  P_{{v}-s} ( y_2 - x_2) P_{{v}-s} ( y_3 - x_3) 
	dy dz dx dr \bigg]^{\frac 1 2} ds.
\end{equs}
Applying Jensen's inequality (with respect to the normalised measure $\frac{ds}t$) and bringing time integrals inside, 
we have
\begin{equs}
	\| A_{2, \epsilon} \|_2 
	&\lesssim \epsilon^{\frac \kappa 2 -1 } \sqrt t  \bigg[  \int_{\R^{10d}} 
	|\psi_{f,g}(x)| \, |y_1 - y_4|^{-\kappa}|z_1 - y_2|^{-\kappa} |z_2 - y_3|^{-\kappa} \\
	& \qquad 
	\bigg( \int_{0}^{t} P_{{v}-s} ( y_2 - x_2) P_{{v}-s} ( y_3 - x_3)  \int_s^{\tau} 
	P_{{\tau}-r} ( x_1 - y_1  ) P_{{\tau}-r} ( x_4 - y_4  ) 
	\\
	& \qquad
	P_{r-s} ( y_1 - z_1 )  P_{r-s} ( y_4 - z_2 )    
	dr ds \bigg)
	dy dz dx \bigg]^{\frac 1 2},
\end{equs}
obtaining the required inequality.
\end{proof}

\subsection{Elementary estimates}
It remains to work on elementary estimates for the integrals related to the residual terms, for this we employ the Feynman diagrams analysis.
\begin{lemma}\label{lem:multiple_time_int}
	Let $w_1, w_2, x_1, x_2, y_1, y_2 \in \R^d\setminus\{0\}$. For any $\tau, v \geq t>0$ and  $\f12\le \lambda < \frac 2 3$,	
	\begin{equs} 
		 J &= \int_{0}^{t} P_{{v}-s}(w_1) P_{{v}-s}(w_2) \int_{s}^{\tau} P_{{\tau}-r} (x_1) P_{{\tau}-r} (x_2)
		P_{r-s} (y_1)  P_{r-s} (y_2) dr ds 
		\\
		&  \lesssim 
		t^{2-3\lambda} 
		\Bigl(\prod_{i=1}^2 |w_i|^{\lambda - d} |x_i|^{\lambda - d} |y_i|^{\lambda - d}\Bigr)\;.
	\end{equs}
\end{lemma}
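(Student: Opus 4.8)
The plan is to collapse $J$ to a pure time integral by applying the pointwise heat-kernel bound to all six factors, and then to evaluate the resulting iterated integral explicitly; the two restrictions on $\lambda$ turn out to be exactly what is needed for the two time integrations.

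First I would apply the first estimate of \eqref{eq:comp_est} in Lemma~\ref{lem:gaussian_time_bounds} to each of the six heat kernels in $J$, using the \emph{same} exponent $\lambda$ throughout (legitimate since $\lambda<\frac23<d$). Since all integrands are nonnegative, Tonelli lets us factor out the spatial variables and obtain
\begin{equ}
 J \lesssim \Bigl(\prod_{i=1}^2 |w_i|^{\lambda-d}|x_i|^{\lambda-d}|y_i|^{\lambda-d}\Bigr)\, \int_0^t (v-s)^{-\lambda}\int_s^\tau (\tau-r)^{-\lambda}(r-s)^{-\lambda}\,dr\,ds\;,
\end{equ}
because the product of the six time powers is $(v-s)^{-\lambda}(\tau-r)^{-\lambda}(r-s)^{-\lambda}$. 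It then remains to bound the time integral by a constant multiple of $t^{2-3\lambda}$, uniformly over $\tau,v\ge t$.

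Next I would handle the inner $r$-integral via the substitution $r=s+u(\tau-s)$, which turns it into $(\tau-s)^{1-2\lambda}\,B(1-\lambda,1-\lambda)$, the Beta integral being finite precisely because $\lambda<1$. Here the hypothesis $\lambda\ge\frac12$ enters: since $1-2\lambda\le 0$ and $\tau-s\ge t-s$, we get $(\tau-s)^{1-2\lambda}\le (t-s)^{1-2\lambda}$, which removes the dependence on $\tau$; likewise $(v-s)^{-\lambda}\le (t-s)^{-\lambda}$ since $v\ge t$. Consequently the time integral is at most $B(1-\lambda,1-\lambda)\int_0^t (t-s)^{1-3\lambda}\,ds = \frac{B(1-\lambda,1-\lambda)}{2-3\lambda}\,t^{2-3\lambda}$, where finiteness of the last integral near $s=t$ is exactly the condition $1-3\lambda>-1$, i.e.\ $\lambda<\frac23$. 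Combining with the display above gives the claimed bound.

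There is no genuine obstacle beyond this bookkeeping; the only point deserving attention is that both constraints on $\lambda$ are used and are sharp for this argument — $\lambda\ge\frac12$ is what makes $(\tau-s)^{1-2\lambda}$ uniformly bounded as $\tau\to\infty$, while $\lambda<\frac23$ is what makes the final $s$-integral converge at $s=t$ (which can occur when $v=\tau=t$).
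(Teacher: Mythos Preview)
Your proof is correct and follows essentially the same route as the paper: apply the pointwise heat-kernel bound \eqref{eq:comp_est} to all six factors, evaluate the inner $r$-integral via the Beta identity to get $c(\tau-s)^{1-2\lambda}$, use $\lambda\ge\frac12$ to replace $\tau-s$ and $v-s$ by $t-s$, and integrate. Your remarks on where each constraint on $\lambda$ is used match the paper's reasoning exactly.
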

\begin{proof}
	Since $\tau, v \geq t$, we can use the pointwise bound  \eqref{eq:comp_est}, yielding
	\begin{equ}\label{eq:lambda_bound_prel_lemma}
		J \lesssim 
		\Bigl(\prod_{i=1}^2 |w_i|^{\lambda - d} |x_i|^{\lambda - d} |y_i|^{\lambda - d}\Bigr)
		\int_{0}^{t} ({v}-s)^{-\lambda} \int_{s}^{\tau} ({\tau}-r)^{-\lambda}  
		(r-s)^{-\lambda}  dr ds.
	\end{equ}
For a constant $c$,
	\begin{equs}
		&\int_{0}^{t} ({v}-s)^{-\lambda} \int_{s}^{\tau} ({\tau}-r)^{-\lambda}  
		(r-s)^{-\lambda}  dr ds
		\\
		&=c
		\int_{0}^{t} ({v}-s)^{-\lambda} ({\tau}-s)^{1-2\lambda}   ds 
		\le	\int_{0}^{t} (t-s)^{1-3\lambda}  
		ds \lesssim t^{2-3\lambda} \;,
	\end{equs}
	this concludes the proof.
	We need $ 1-2 \lambda\le 0$ to replace $\tau -s$ by $t-s$.
\end{proof}

\begin{lemma}\label{lem:simple-graph}
	Let $F\in \Cc(\R^{4d})$. 
	Let $\lambda>0$, $\alpha>-d$, and $ \beta>-d$ satisfy  $\lambda + \alpha <  \min (-\f 12 \beta, \f  12 d, -\beta )$. Then
	the following integral  is finite,	
	\begin{equ}\label{eq:defn_I_Gamma_tilde}
		I_{\tilde \Gamma}(F; \lambda) = \int_{\R^{6d}}
		(|z_1- x_1||z_2-x_3|) ^{\lambda -d}\ ( |z_1 - x_2| |z_2-x_4| )^{\alpha}  \,\, |z_1 - z_2|^{\beta}
		\, 	|F(x)|  \, 
		dx dz.
	\end{equ}
\end{lemma}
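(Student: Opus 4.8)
The plan is to recognise $I_{\tilde\Gamma}(F;\lambda)$ as an integral of the form \eqref{eq:I_Gamma} and then to invoke Lemma~\ref{lem:diagram_integration}. Concretely, I would let $\tilde\Gamma$ be the Feynman diagram with legged vertices $\cV_\ell = \{x_1,x_2,x_3,x_4\}$, interior vertices $\cV_\ii = \{z_1,z_2\}$, and five edges: $z_1$--$x_1$ and $z_2$--$x_3$ carrying the kernel $|\cdot|^{\lambda-d}$, $z_1$--$x_2$ and $z_2$--$x_4$ carrying $|\cdot|^{\alpha}$, and $z_1$--$z_2$ carrying $|\cdot|^{\beta}$; for each of these pure power kernels one takes $\deg_0$ and $\deg_\infty$ both equal to the corresponding exponent. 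We may assume $\lambda<d$ and $\alpha,\beta<0$, so that these degrees are genuinely negative: the hypotheses force $\lambda<\tfrac32 d$ and $\lambda+\alpha<-\beta$, and whenever a degree happens to be non-negative one first splits the offending factor via the triangle inequality around one of the legged vertices, which (using that $F$ has compact support) reduces the problem to a finite sum of diagrams with strictly negative remaining degrees. With this set-up, $I_{\tilde\Gamma}(F;\lambda)$ is $I_{\tilde\Gamma}(|F|)$ in the notation of \eqref{eq:I_Gamma}, and it remains to check the two scaling conditions.

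The key structural observation is that $\tilde\Gamma$ is a tree: $z_1$ has neighbours $\{x_1,x_2,z_2\}$, $z_2$ has neighbours $\{x_3,x_4,z_1\}$, each $x_i$ is a leaf, and there is no undirected cycle. Hence, by Remark~\ref{remark:tree_diagram}, the small-scale condition \eqref{eq:small_scales_crit} reduces to $\deg_0\tfrk(e)>-d$ for every edge, i.e.\ to $\lambda-d>-d$, $\alpha>-d$ and $\beta>-d$, all of which hold by hypothesis (using $\lambda>0$).

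For the large-scale condition \eqref{eq:large_scales_crit} I would enumerate the tight partitions of $\cV$. Since $\cV_\ell$ must lie in a single block, the only non-trivial tight partitions are obtained by deciding how $z_1$ and $z_2$ are placed: $(a)$ the partition $\{\cV_\ell,z_1\}\cup\{z_2\}$ and its mirror $\{\cV_\ell,z_2\}\cup\{z_1\}$, whose traversing edges are the three edges incident to the separated interior vertex, giving $\deg_\infty=(\lambda-d)+\alpha+\beta+d=\lambda+\alpha+\beta$; $(b)$ the partition $\cV_\ell\cup\{z_1,z_2\}$, whose traversing edges are $z_1$--$x_1,\ z_1$--$x_2,\ z_2$--$x_3,\ z_2$--$x_4$ (but not $z_1$--$z_2$), giving $\deg_\infty=2(\lambda-d)+2\alpha+d=2\lambda+2\alpha-d$; and $(c)$ the partition $\cV_\ell\cup\{z_1\}\cup\{z_2\}$, where all five edges traverse, giving $\deg_\infty=2(\lambda-d)+2\alpha+\beta+2d=2\lambda+2\alpha+\beta$. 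Requiring each of these quantities to be strictly negative is precisely the condition $\lambda+\alpha<\min(-\beta,\tfrac d2,-\tfrac\beta2)$ assumed in the statement, so Lemma~\ref{lem:diagram_integration} applies and yields $I_{\tilde\Gamma}(F;\lambda)<\infty$.

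I do not expect any genuine analytic obstacle here: once the diagram is set up, finiteness is immediate from Lemma~\ref{lem:diagram_integration}. The only points requiring care are bookkeeping ones, namely checking that the list of tight partitions in $(a)$--$(c)$ is exhaustive and computing the sets of traversing edges (and the corresponding $\deg_\infty$ sums) correctly, together with the minor preliminary reduction to kernels of strictly negative degree when $\alpha$, $\beta$ or $\lambda-d$ is non-negative.
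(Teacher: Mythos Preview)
Your proposal is correct and follows essentially the same approach as the paper: set up the tree diagram $\tilde\Gamma$, invoke Remark~\ref{remark:tree_diagram} for the small-scale condition, and enumerate the four tight partitions (your $(a)$ with its mirror, $(b)$, $(c)$ are exactly the paper's $\P_3,\P_4,\P_2,\P_1$) to verify the large-scale condition via Lemma~\ref{lem:diagram_integration}. Your extra paragraph reducing to strictly negative degrees is a point the paper glosses over; in the actual applications (Lemma~\ref{lem:diagram1_finite}) one always has $\alpha,\beta<0$ and $\lambda<d$, so this reduction is not needed there.
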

\begin{proof} The graph associated with the integral is
	\begin{center}\begin{tikzpicture}[scale=0.8, every node/.style={transform shape}]
		\node (z1) at (1.5,0) {};
		\node (Z1) at (1.5,-0.5) {$z_1$};
		\node (x1) at (0,2) {};
		\node (X1) at (-0.5,2) {$x_1$};
		\node (x2) at (3,2) {};
		\node (X2) at (2.5,2) {$x_2$};
		
		\node (z2) at (7.5,0) {};
		\node (Z2) at (7.5,-0.5) {$z_2$};
		\node (x3) at (6,2) {};
		\node (X3) at (5.5,2) {$x_3$};
		\node (x4) at (9,2) {};
		\node (X4) at (8.5,2) {$x_4$};
		
		\draw[color = gray, thick] (0,2) -- (1.5, 0) node [midway, left] {$\lambda-d \,\,$};
		\draw[color = red, thick] (1.5, 0) -- (3,2) node [midway, right] {$\,\,\alpha$};
		\draw[color = gray, thick] (6,2) -- (7.5,0) node [midway, left] {$\lambda-d\,\,$};
		\draw[color = red, thick] (7.5,0)
		-- (9,2) node [midway, right] {$\,\,\alpha$};
		\draw[color = orange, thick]  (1.5,0) -- (7.5,0) node [midway,below] {$\beta$};
		
		\draw[black, fill=black] (x4) circle (3pt);
		\draw[black, fill=black] (x3)  circle (3pt);
		\draw[black, fill=black] (x2)  circle (3pt);
		\draw[black, fill=black] (x1)  circle (3pt);
		\draw[blue, fill=blue] (z1) circle (3pt);
		\draw[blue, fill=blue] (z2) circle (3pt);
		
		\end{tikzpicture}
		\\
		Diagram $\tilde \Gamma$
	\end{center}
	The small scale degree on each edge of the tree graph is clearly greater than the dimension $-d$, so to apply Lemma~\ref{lem:diagram_integration}  we  only need to check the large scale condition is satisfied. Since 
	$\cV_\ell  =\{\bullet_{x_1}, \bullet_{x_2}, \bullet_{x_3}, \bullet_{x_4}\}$,  
	the tight partitions are:
	$$\begin{aligned}
	\P_1&=\{\cV_\ell, \{\bbullet_{z_1}\}, \{\bbullet_{z_2}\}\},  \qquad \P_2=\{\cV_\ell, \{\bbullet_{z_1},\bbullet_{z_2}\}\},  \\
	\P_3&=\{ \{\cV_\ell, \bbullet_{z_1}\},\{\bbullet_{z_2}\}\},
	\qquad  \P_4=\{ \{\cV_\ell, \bbullet_{z_2}\},\{\bbullet_{z_1}\}\}.
	\end{aligned}
	$$
	A trivial computation shows that 
	$$\begin{aligned}
	\deg_\infty \, \P_1&=2\lambda -2d +2 \alpha +\beta +2d, \qquad \deg_\infty \, \P_2=2\lambda -2d +2 \alpha +d,\\
	\deg_\infty \, \P_3&=\lambda -d + \alpha +\beta +d,
	\end{aligned}$$
	concluding the proof.
\end{proof}

\begin{lemma}\label{lem:diagram1_finite}
	For $F\in \Cc(\R^{4d})$, let
	\begin{equ} \label{eq:defn_I_Gamma1}
		I_{\Gamma_1}(F; \lambda) = \int_{\R^{8d}}
		\Bigl(\prod_{k=1}^4 |y_k - x_k|^{\lambda -d} \Bigr) \, |y_1 - y_4|^{2-\kappa}
		|y_1- y_2|^{-\kappa} |y_3- y_4|^{-\kappa}  \, |F(x)|  \, 
		dx dy,
	\end{equ}	
	\begin{equs} \label{eq:defn_I_Gamma2}
		&I_{\Gamma_2}(F; \lambda)= \int_{\R^{10d}} 
		|y_1 - y_4|^{-\kappa}|z_1 - y_2|^{-\kappa} |z_2 - y_3|^{-\kappa}  \\
		&\hspace{10em}
		|y_1 - z_1|^{\lambda - d}|y_4 - z_2|^{\lambda - d}\, 
		\Bigl(  \prod_{i=1}^4 |x_i - y_i|^{\lambda - d}  \Bigr) |F(x)|dy dz dx.	\quad \quad 
	\end{equs}
	Then $I_{\Gamma_1}(F; \lambda)$ is finite for any $\lambda \in (0,\frac {3\kappa -2} 4)$;  and 
	$I_{\Gamma_2}(F; \lambda)$ is finite for any $\lambda \in (0,\frac {\kappa } 2)$.\end{lemma}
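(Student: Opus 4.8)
The plan is to evaluate both integrals by successively ``integrating out'' the interior vertices of lowest valence, using nothing but the elementary convolution identity \eqref{eq:homog_conv}, until what remains is an integral of the form $I_{\tilde\Gamma}$ treated in Lemma~\ref{lem:simple-graph}. Everything in sight is nonnegative, so Tonelli's theorem licenses integrating in whatever order is convenient; the only thing to monitor is that the exponents generated along the way meet the hypotheses of \eqref{eq:homog_conv} and, at the end, of Lemma~\ref{lem:simple-graph}.

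For $I_{\Gamma_1}(F;\lambda)$ I would first integrate out $y_2$ and $y_3$. Since $\lambda>0$, $\kappa<d$, and (as will be seen a posteriori) $\lambda<\kappa$, identity \eqref{eq:homog_conv} gives $\int_{\R^d}|y_2-x_2|^{\lambda-d}|y_1-y_2|^{-\kappa}\,dy_2 = c\,|y_1-x_2|^{\lambda-\kappa}$ and, likewise, $\int_{\R^d}|y_3-x_3|^{\lambda-d}|y_3-y_4|^{-\kappa}\,dy_3 = c\,|y_4-x_3|^{\lambda-\kappa}$. What remains is an integral over $(x,y_1,y_4)$ which, after relabelling $y_1\rightsquigarrow z_1$, $y_4\rightsquigarrow z_2$ and swapping the roles of $x_3$ and $x_4$ inside the (still compactly supported) test function, is exactly $I_{\tilde\Gamma}(\tilde F;\lambda)$ of Lemma~\ref{lem:simple-graph} with $\alpha=\lambda-\kappa$, $\beta=2-\kappa$, and $\tilde F(x_1,x_2,x_3,x_4)=F(x_1,x_2,x_4,x_3)$. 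Since $2<\kappa<d$ one has $\min\bigl(-\tfrac12\beta,\tfrac d2,-\beta\bigr)=\tfrac{\kappa-2}{2}$, so the hypothesis $\lambda+\alpha<\tfrac{\kappa-2}{2}$ of that lemma reads $2\lambda-\kappa<\tfrac{\kappa-2}{2}$, i.e.\ $\lambda<\tfrac{3\kappa-2}{4}$; this is indeed $<\kappa$, which retroactively justifies the two applications of \eqref{eq:homog_conv}. Hence $I_{\Gamma_1}(F;\lambda)<\infty$ for $\lambda\in(0,\tfrac{3\kappa-2}{4})$.

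For $I_{\Gamma_2}(F;\lambda)$ the same strategy applies with one extra round. First integrate out $z_1,z_2$: $\int|z_1-y_2|^{-\kappa}|y_1-z_1|^{\lambda-d}\,dz_1 = c\,|y_1-y_2|^{\lambda-\kappa}$ and $\int|z_2-y_3|^{-\kappa}|y_4-z_2|^{\lambda-d}\,dz_2 = c\,|y_4-y_3|^{\lambda-\kappa}$, valid for $\lambda<\kappa$. Then integrate out $y_2,y_3$: $\int|y_1-y_2|^{\lambda-\kappa}|x_2-y_2|^{\lambda-d}\,dy_2 = c\,|y_1-x_2|^{2\lambda-\kappa}$ and similarly for $y_3$; here \eqref{eq:homog_conv} now requires $2\lambda-\kappa<0$, i.e.\ $\lambda<\kappa/2$ (the condition $\lambda-\kappa>-d$ being automatic). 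After relabelling $y_1\rightsquigarrow z_1$, $y_4\rightsquigarrow z_2$ and again swapping $x_3$ with $x_4$, the remaining integral is $I_{\tilde\Gamma}(\tilde F;\lambda)$ with $\alpha=2\lambda-\kappa$, $\beta=-\kappa$; since $\kappa<d$ one has $\min\bigl(-\tfrac12\beta,\tfrac d2,-\beta\bigr)=\tfrac\kappa2$, so the hypothesis $\lambda+\alpha<\tfrac\kappa2$ becomes $3\lambda-\kappa<\tfrac\kappa2$, i.e.\ $\lambda<\kappa/2$ --- precisely the constraint already in force. Therefore $I_{\Gamma_2}(F;\lambda)<\infty$ for $\lambda\in(0,\kappa/2)$.

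I do not expect a serious obstacle: the argument is pure bookkeeping, the only point requiring care being to check that the terminal thresholds $\tfrac{3\kappa-2}{4}$ and $\tfrac\kappa2$ are compatible with the intermediate requirements of \eqref{eq:homog_conv}. As an alternative one can apply Lemma~\ref{lem:diagram_integration} directly: both $\Gamma_1$ and $\Gamma_2$ are trees all of whose edge degrees exceed $-d$ (as $0<\lambda$ and $2<\kappa<d$), so the small-scale condition \eqref{eq:small_scales_crit} holds by Remark~\ref{remark:tree_diagram}, and one then verifies the large-scale condition \eqref{eq:large_scales_crit} over all tight partitions; the binding one is the partition that isolates each interior vertex into its own block, giving $\deg_\infty=4\lambda-3\kappa+2$ for $\Gamma_1$ and $\deg_\infty=6\lambda-3\kappa$ for $\Gamma_2$, which reproduces the same two thresholds. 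Since enumerating all tight partitions is the only mildly tedious part of that route, the reduction argument above is the one I would write out in detail.
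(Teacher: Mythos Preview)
Your proposal is correct and follows essentially the same route as the paper: integrate out the degree-two interior vertices via \eqref{eq:homog_conv} to collapse each diagram to the four-legged tree $\tilde\Gamma$ of Lemma~\ref{lem:simple-graph}, with $(\alpha,\beta)=(\lambda-\kappa,\,2-\kappa)$ for $\Gamma_1$ and $(\alpha,\beta)=(2\lambda-\kappa,\,-\kappa)$ for $\Gamma_2$. Your write-up is in fact slightly more careful than the paper's: you explicitly verify the intermediate hypotheses of \eqref{eq:homog_conv}, you work out that the binding constraint from Lemma~\ref{lem:simple-graph} is $\lambda+\alpha<-\tfrac12\beta$ in both cases, and you note the $x_3\leftrightarrow x_4$ relabelling needed to match the form of $I_{\tilde\Gamma}$ --- all of which the paper leaves implicit.
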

\begin{proof}
	The diagram associated to the first integral is $\Gamma_1$ below, with the labels indicating the
	edge kernel degrees $\deg_0$ (which equals also $\deg_\infty$).
	\begin{center}
		\begin{tikzpicture}[scale=0.8, every node/.style={transform shape}]
		\node at (-0.5,0) {$y_1$};
		\node at (3.5,0) {$y_2$};
		\node at (-0.5,2) {$x_1$};
		\node at (3.5,2) {$x_2$};
		\node at (5.5,0) {$y_3$};
		\node at (9.5,0) {$y_4$};
		\node at (5.5,2) {$x_3$};
		\node at (9.5,2) {$x_4$};
		
		\node at (-0.8, 1) { \textcolor{gray}{$\lambda -d$}};
		\node at (2.2, 1) { \textcolor{gray}{$\lambda -d$}};
		\node at (5.2, 1) { \textcolor{gray}{$\lambda -d$}};
		\node at (8.2 , 1) { \textcolor{gray}{$\lambda -d$}};
		\node at (4.5, -1) {\textcolor{orange}{$2-\kappa$}};
		
		\draw[color = red, thick] (0,0) -- (3,0) node [midway, above] {$-\kappa$};
		\draw[color = gray, thick] (0,0) -- (0,2);
		\draw[color = gray, thick] (3,0) -- (3,2);
		\draw[color = red, thick] (6,0) -- (9,0) node [midway, above] {$-\kappa$};
		\draw[color = gray, thick] (6,0) -- (6,2);
		\draw[color = gray, thick] (9,0) -- (9,2);
		\draw[color = orange, thick]  (0,0) to[out=-30,in=-150] (9,0);
		
		\draw[blue, fill=blue] (0,0) circle (3pt);
		\draw[blue, fill=blue] (3,0) circle (3pt);
		\draw[fill=black] (0,2) circle (3pt);
		\draw[fill=black] (3,2) circle (3pt);
		\draw[blue,fill=blue] (6,0) circle (3pt);
		\draw[blue, fill=blue] (9,0) circle (3pt);
		\draw[fill=black] (6,2) circle (3pt);
		\draw[fill=black] (9,2) circle (3pt);
		
		\end{tikzpicture}\\
		Diagram $\Gamma_1$
	\end{center}
	
	The integral $I_{\Gamma_1}(F; \lambda)$ simplifies after convolution / integrating out $y_2$ and $y_3$. For example,  under our conditions, by \eqref{eq:homog_conv} we have
	$$ \int  |y_2 - x_2|^{\lambda -d} |y_1- y_2|^{-\kappa}	\;dy_2 = C|y_1-x_2|^{\lambda-\kappa}.$$		
		As briefly explained in section~\ref{sec:Feynman}, this procedure collapses diagram $\Gamma_1$ in the sense  replacing 
		$  |y_2 - x_2|^{\lambda -d} |y_1- y_2|^{-\kappa}	$ with $|y_1-x_2|^{\lambda-\kappa}$ corresponds to removing the node $y_2$ in the diagram. Similarly, node $y_3$ is removed.
This corresponds to reduce the integral $I_{\Gamma_1}(F; \lambda)$ as follows:
		\begin{equ}
				I_{\Gamma_1}(F; \lambda) \lesssim  \int_{\R^{6d}}
			(|y_1- x_1||y_4-x_4|) ^{\lambda -d}\ ( |y_1 - x_2| |y_4-x_3| )^{\lambda - \kappa}  \,\, |y_1 - y_4|^{2-\kappa}
			\, 	|F(x)|  \, dy_1 dy_4
			dx. \end{equ}
	Hence, $I_{\Gamma_1}$ is controlled by the integral represented in graph $\tilde \Gamma$ in the proof of Lemma~\ref{lem:simple-graph}, where we take $\alpha = \lambda-\kappa$ and $\beta = 2-\kappa$.  In other words, $I_{\Gamma_1}$ is finite whenever $I_{\tilde \Gamma}$ is, and we conclude the proof by Lemma~\ref{lem:simple-graph}.	

	The diagram associated to $I_{\Gamma_2}$ is as follows:	
	\begin{center}
		\begin{tikzpicture}[scale=0.8, every node/.style={transform shape}]
		\node at (-0.5,0) {$y_1$};
		\node at (4.5,0) {$y_2$};
		\node at (-0.5,2) {$x_1$};
		\node at (4.5,2) {$x_2$};
		\node at (6.5,0) {$y_3$};
		\node at (11.5,0) {$y_4$};
		\node at (6.5,2) {$x_3$};
		\node at (11.5,2) {$x_4$};
		\node at (2.4,-0.8) {$z_1$};
		\node at (8.6,-0.8) {$z_2$};
		
		\node at (-0.8, 1) { \textcolor{gray}{$\lambda -d$}};
		\node at (3.2, 1) { \textcolor{gray}{$\lambda -d$}};
		\node at (6.2, 1) { \textcolor{gray}{$\lambda -d$}};
		\node at (10.2 , 1) { \textcolor{gray}{$\lambda -d$}};
		\node at (5.5, -1.5) {\textcolor{red}{$-\kappa$}};
		
		\draw[color = gray, thick] (0,0) -- (2,-0.5) node [midway, above] {$\lambda -d$};
		\draw[color = red, thick] (2,-0.5) -- (4,0) node [midway, above] {$-\kappa$};
		\draw[color = gray, thick] (0,0) -- (0,2);
		\draw[color = gray, thick] (4,0) -- (4,2);
		\draw[color = gray, thick] (9,-0.5) -- (11,0)  node [midway, above] {$\lambda -d$};
		\draw[color = red, thick] (7,0) -- (9,-0.5)  node [midway, above] {$-\kappa$};
		\draw[color = gray, thick] (7,0) -- (7,2);
		\draw[color = gray, thick] (11,0) -- (11,2);
		\draw[color = red, thick]  (0,0) to[out=-35,in=-145] (11,0);
		
		\draw[blue,fill=blue] (0,0) circle (3pt);
		\draw[blue,fill=blue] (4,0) circle (3pt);
		\draw[fill=black] (0,2) circle (3pt);
		\draw[fill=black] (4,2) circle (3pt);
		\draw[orange,fill=orange] (2,-0.5) circle (3pt);
		\draw[blue,fill=blue] (7,0) circle (3pt);
		\draw[blue,fill=blue] (11,0) circle (3pt);
		\draw[fill=black] (7,2) circle (3pt);
		\draw[fill=black] (11,2) circle (3pt);
		\draw[orange,fill=orange] (9,-0.5) circle (3pt);
		
		\end{tikzpicture}\\
		Diagram $\Gamma_2$
	\end{center}
	We note that, integrating out $y_2\,$, $y_3\,$, $z_1$ and $z_2$, removing the associated nodes in $\Gamma_2$ reduces the graph to $\tilde \Gamma$ where $\alpha = 2\lambda-\kappa$ and $\beta = -\kappa$.
	Applying Lemma~\ref{lem:simple-graph}, we see that also $I_{\Gamma_2}$ is finite and we conclude the proof.	
\end{proof}

With these in place, we are going to recast the  bounds for $ \var( A_{1,\epsilon}) $ and for $ \| A_{2,\epsilon}\|_2$ 
in terms of the form $o(\epsilon) I_{\Gamma_i}$, where
$I_{\Gamma_i}$ were shown to be finite by analysing the associated Feynman diagrams.

\subsection{Main Estimates}\label{sec:proof_vanishing2}
 The main result of the section is:

\begin{proposition}\label{prop:vanishing_terms_bounds}
	Let $f, g \in \Cc(\R^d)$ and $\tau, v >0$. Then, for any given $\lambda$ in the region stated below, the following bounds hold uniformly  in $\epsilon \in (0,1)$:
	\minilab{lemmaBound}\begin{equs}[2]
		\var( A_{1,\epsilon})  &\lesssim \epsilon^{\kappa -2} (\tau \wedge v)^{1-2\lambda} I_{\Gamma_1}(F; \lambda), \qquad
		& \lambda &\in (0,\tfrac 1 2)\;,\qquad\quad
		\label{eq:varA_1_control}\\
		\| A_{2,\epsilon}\|_2  &\lesssim \epsilon^{\frac \kappa 2 -1} (\tau\wedge v)^{\frac 3 2 (1-\lambda)} [ I_{\Gamma_2}(F; \lambda)]^{\frac 1 2}, \quad
		& \lambda &\in  [\tfrac 1 2, \tfrac 2 3);
		\label{eq:L2A_2_control}
	\end{equs}
	where $F=\prod_{i=1}^4 F_i$ where $F_i\in \{f,g\}$.
\end{proposition}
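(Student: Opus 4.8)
The proposition is essentially a repackaging of the preliminary estimates already in place, so my plan is to assemble Lemmas~\ref{lem:res_terms_estimates_multi}, \ref{lem:res_terms_estimates_multi-2}, \ref{lem:multiple_time_int} and \ref{lem:diagram1_finite} with only some bookkeeping. For the first bound I would start from Lemma~\ref{lem:res_terms_estimates_multi}, which for $\lambda\in(0,\tfrac12)$ already produces the prefactor $\epsilon^{\kappa-2}(\tau\wedge v)^{1-2\lambda}$ in front of the sum, over the four pairs $(i,j)$ with $i\in\{1,2\}$ and $j\in\{3,4\}$, of integrals over $\R^{8d}$ whose integrand is $\bigl(\prod_{k=1}^4|y_k-x_k|^{\lambda-d}\bigr)\,|y_i-y_j|^{2-\kappa}\,|y_1-y_2|^{-\kappa}\,|y_3-y_4|^{-\kappa}\,|\varphi_{f,g}(x)|$. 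I would then note that each summand, after relabelling the integration variables by the permutation swapping $(x_1,y_1)\leftrightarrow(x_2,y_2)$ and/or $(x_3,y_3)\leftrightarrow(x_4,y_4)$, coincides with $I_{\Gamma_1}(F;\lambda)$ as in \eqref{eq:defn_I_Gamma1}: such a permutation fixes the collection of kernel degrees and the singular exponents and merely reshuffles which of the four test-function factors is $f$ and which is $g$, so $F=\prod_{i=1}^4 F_i$ with $F_i\in\{f,g\}$. Since $\kappa>2$ forces $\tfrac{3\kappa-2}{4}>1>\tfrac12$, Lemma~\ref{lem:diagram1_finite} guarantees $I_{\Gamma_1}(F;\lambda)<\infty$ throughout this range, and \eqref{eq:varA_1_control} follows.

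For the second bound I would start from Lemma~\ref{lem:res_terms_estimates_multi-2}, which bounds $\|A_{2,\epsilon}\|_2$ by $\epsilon^{\frac{\kappa}{2}-1}\sqrt t$ times the square root of an integral over $\R^{10d}$ whose integrand is $|\psi_{f,g}(x)|\,|y_1-y_4|^{-\kappa}|z_1-y_2|^{-\kappa}|z_2-y_3|^{-\kappa}$ multiplied by the inner time integral
\[ \int_0^t P_{v-s}(y_2-x_2)P_{v-s}(y_3-x_3)\int_s^\tau P_{\tau-r}(x_1-y_1)P_{\tau-r}(x_4-y_4)P_{r-s}(y_1-z_1)P_{r-s}(y_4-z_2)\,dr\,ds . \]
This inner integral is precisely the quantity $J$ of Lemma~\ref{lem:multiple_time_int} once one identifies $w_1=y_2-x_2$, $w_2=y_3-x_3$ and the remaining four arguments with $x_1-y_1$, $x_4-y_4$, $y_1-z_1$, $y_4-z_2$, using $\tau,v\geq t$. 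For $\lambda\in[\tfrac12,\tfrac23)$ that lemma bounds it by a constant times $(\tau\wedge v)^{2-3\lambda}$ times $|x_1-y_1|^{\lambda-d}|x_4-y_4|^{\lambda-d}|y_2-x_2|^{\lambda-d}|y_3-x_3|^{\lambda-d}|y_1-z_1|^{\lambda-d}|y_4-z_2|^{\lambda-d}$. Inserting this, the spatial integral left under the square root is exactly $(\tau\wedge v)^{2-3\lambda}\,I_{\Gamma_2}(\psi_{f,g};\lambda)$ with $I_{\Gamma_2}$ as in \eqref{eq:defn_I_Gamma2}; pulling the power of $(\tau\wedge v)$ out of the square root and combining with $\sqrt t$ (recall $t=\tau\wedge v$) gives $\|A_{2,\epsilon}\|_2\lesssim\epsilon^{\frac{\kappa}{2}-1}(\tau\wedge v)^{\frac{3}{2}(1-\lambda)}[I_{\Gamma_2}(\psi_{f,g};\lambda)]^{1/2}$, and since $\kappa>2$ forces $\tfrac{\kappa}{2}>1>\tfrac23$, Lemma~\ref{lem:diagram1_finite} yields finiteness of $I_{\Gamma_2}(\psi_{f,g};\lambda)$. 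This is \eqref{eq:L2A_2_control}.

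The only delicate points are purely combinatorial: (i) checking that the four permutations in the first part really send the integrand onto an $I_{\Gamma_1}$-integrand with an admissible product $F$, and (ii) matching slot by slot the post-time-integration integrand in the second part against the definition of $I_{\Gamma_2}$ — in particular verifying that the four heat-kernel pairs in $J$ generate exactly the six factors $\prod_{i=1}^4|x_i-y_i|^{\lambda-d}$ and $|y_1-z_1|^{\lambda-d}|y_4-z_2|^{\lambda-d}$ occurring there, after the variable relabelling already performed inside the proof of Lemma~\ref{lem:res_terms_estimates_multi-2}. All the genuinely analytic work — the Clark--Ocone and Malliavin moment bounds underlying Lemmas~\ref{lem:res_terms_estimates_multi}--\ref{lem:res_terms_estimates_multi-2}, the Gaussian time-integration of Lemma~\ref{lem:multiple_time_int}, and the Feynman-diagram power counting behind Lemma~\ref{lem:diagram1_finite} (via Lemma~\ref{lem:diagram_integration}) — is already done, so I do not anticipate a further obstacle; the proof is a short assembly plus this bookkeeping.
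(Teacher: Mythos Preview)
Your proposal is correct and follows essentially the same approach as the paper: apply Lemma~\ref{lem:res_terms_estimates_multi} and relabel the four summands into the form $I_{\Gamma_1}(F;\lambda)$ for \eqref{eq:varA_1_control}, and combine Lemma~\ref{lem:res_terms_estimates_multi-2} with Lemma~\ref{lem:multiple_time_int} to recognise $I_{\Gamma_2}$ for \eqref{eq:L2A_2_control}, invoking Lemma~\ref{lem:diagram1_finite} for finiteness in both cases. Your write-up is in fact slightly more explicit than the paper's about the relabelling step and the identification $\sqrt t\cdot t^{1-\frac{3\lambda}{2}}=t^{\frac32(1-\lambda)}$.
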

\begin{proof}
Note the integrals  $I_{\Gamma_i}(F; \lambda)$ are finite by Lemmas~\ref{lem:diagram1_finite}.
The proof of the proposition relies crucially on the bounds collected in Lemma~\ref{lem:res_terms_estimates_multi}-\ref{lem:res_terms_estimates_multi-2}. Firstly
\begin{equs}
\var(A_{1, \epsilon})& \lesssim
\epsilon^{\kappa -2}  t^{1-2\lambda} \sum_
{\substack{i=1,2\\ j= 3,4}}
\int_{\R^{8d}}   \Bigl(
\prod_{k=1}^4 |y_k - x_k|^{\lambda -d}   \Bigr)
|y_i - y_j|^{2-\kappa}\times \\
&  \hspace{2em}
|y_1- y_2|^{-\kappa} |y_3- y_4|^{-\kappa}  \, 	|\varphi_{f,g}(x)|  \, 
dx dy ds.
\end{equs}
which by Lemma~\ref{lem:diagram1_finite} is controlled by $I_{\Gamma_1}$:
 $\var(A_{1, \epsilon}) \lesssim \epsilon^{\kappa -2} \, (\tau \wedge v)^{1-2\lambda}I_{\Gamma_1}(F; \lambda)$. 

Fo the second term, we use estimate from Lemma~\ref{lem:res_terms_estimates_multi-2}, to which
we apply Lemma~\ref{lem:multiple_time_int} with some fixed $\lambda \in [\frac 1 2, \frac 2 3)$ to obtain
\begin{equs}
	\| A_{2, \epsilon} \|_2 &\lesssim \epsilon^{\frac \kappa 2 -1 } t^{ \frac 3 2 ( 1- \lambda) } \bigg[  
	\int_{\R^{10d}} |\psi_{f,g}(x)|   
	|y_1 - y_4|^{-\kappa}|z_1 - y_2|^{-\kappa} |z_2 - y_3|^{-\kappa}
	\\
	& \qquad 
	|y_1 - z_1|^{\lambda - d}|y_4 - z_2|^{\lambda - d}\, 
	\Bigl(  \prod_{i=1}^4 |x_i - y_i|^{\lambda - d}  \Bigr) dy dz dx \bigg]^{\frac 1 2}
	\\
	&=  \epsilon^{\frac \kappa 2 -1 } t^{ \frac 3 2 ( 1- \lambda) }  [I_{\Gamma_2} ]^\frac 1 2.
\end{equs}
This concludes the proof of Proposition~\ref{prop:vanishing_terms_bounds}.
\end{proof}

\section{Weak convergence of \TitleEquation{X^\epsilon}{X eps} in negative H\"{o}lder topology}\label{sec:weak_convergence}

Let $\D$ denote the space of smooth functions with compact support, and $\D'$ the dual space
of all distributions. 
Let $\eta\in \D$ denote a test function, not to be confused with the white noise in the beginning of the article, and for $x\in \R^d$ and $\lambda>0$ let $\eta^\lambda_x $ denote its transformation by scaling and translation:
\begin{equ}[e:defScaling]
	\eta^\lambda_x (y) := \lambda^{-d} \eta ( \tfrac{y-x}{\lambda}).
\end{equ}
If $\lambda=1$ or $x=0$, the corresponding index will be dropped.
These operations are $L^1$ invariant. Let $B_a$ denote the open ball of radius $a$ centred at $0$.
Consider the subset of $\C^r$ test functions:
$$  
\B_r := \{ \eta \in \D: \supp(\eta)\subset B_1, \|\eta\|_{\C^r}\le 1 \}.
$$

Let $\alpha<0$, denote $ \lceil -\alpha \rceil \geq -\alpha$ the smallest (positive) integer greater than $-\alpha$.
Let  $\C^\alpha \subset \D^\prime$ denote the subset of locally H\"older continuous distributions. Note that they are not necessarily tempered distributions.
  \begin{definition}\cite[Def.~3.7]{hairer_RegStr}
A distribution  $\zeta $ is said to be in $ \C^\alpha$, if  for every compact set $E \subset \R^d$,
the following \begin{equ}\label{eq:seminorm_Calpha_basic} 
	\| \zeta \|_{\alpha; E} := \sup_{\eta \in \B_r } \sup_{ \lambda \in (0,1)} \sup_{x \in  E}  \,\, \lambda^{-\alpha}| \langle \zeta, \eta^\lambda_x \rangle |,
\end{equ}
where $r = \lceil -\alpha \rceil $, is finite.
\end{definition} These define a family of semi-norms, so $\C^\alpha$ is a Fréchet space
which can be metrised in the usual way, for example by setting 
\begin{equ}\label{dw}
	d_\alpha (\zeta, \zeta'):= \sum_{m \geq 1} 2^{-m}\big(1 \, \wedge \, \| \zeta - \zeta' \|_{\alpha; B_m} \big)\;.
\end{equ}
As metric spaces, $\C^{\alpha'}$ is compactly embedded in $\C^\alpha$ for any $\alpha' > \alpha$.

\subsection{Proof of Theorem~\ref{thm:weak_conv_1}} We recall $\kappa \in (2, d)$ is the decay exponent of the spatial covariance $R$,
$\cU_t(g)$ denotes the solution of (\ref{e:limitEW}) at time $t>0$ tested against a test function $g$. Let $\dot W^\kappa$ denotes the centred Gaussian noise with spatial covariance $|x|^{-\kappa}$,
\begin{equ}  [e:cU_tested]
	\cU_t(g) 
	= \beta \nueff  \int_0^t \int_{ \R^d} \bigg( \int_{ \R^d}
	P_{t-s} (x-y) g(x) dx
	\bigg) dW^\kappa (s,y). 
\end{equ} 
Let us also recall 
\begin{equ}
	X_{t}^{\epsilon, g} = \epsilon^{1-\f \kappa 2}\int_{\R^d}  \bigl( u_\epsilon(t,x)- 1\bigr)g(x) dx.
\end{equ}
	
In this section we show that  $X^\epsilon$ converges weakly to $\cU$, as distribution valued stochastic processes with  H\"{o}lder continuous in time trajectories.
Throughout this section,  $\gamma \in (0, \frac 1 2)$ will parametrise H\"{o}lder continuity in time and $\alpha < 1 - \f \kappa 2 - 2\gamma$ the distributional regularity.

We now prove  Theorem~\ref{thm:weak_conv_1}, which we restate here 
for the convenience of the reader.

\begin{theorem}\label{thm:weak_conv}
Let $d\geq 3$, $\sigma$ be Lipschitz continuous, 
	and suppose that $\xi$ satisfies Assumption~\ref{assump-noise}.	
Then, for any $\gamma \in (0,\frac 1 2)$ and $\alpha < 1 - \frac\kappa2 - 2\gamma$, there exists $\beta_{\gamma,\alpha} >0$ such that, for any $\beta <\beta_{\gamma,\alpha}$ and %
$T>0$, $X^\epsilon$ converges to $\cU$ weakly in $\C^\gamma ([0,T], \C^{\alpha})$.
\end{theorem}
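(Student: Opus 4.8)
The plan is to deduce Theorem~\ref{thm:weak_conv} from two ingredients: (i) the finite-dimensional convergence established in Theorem~\ref{thm:basic-convergence}, and (ii) tightness of the laws of $X^\epsilon$ in $\C^\gamma([0,T],\C^\alpha)$. Once both are in hand, standard arguments identify the limit as $\cU$, since $\cU$ lives in $\C^\gamma([0,T],\C^\alpha)$ for the stated range (the Edwards--Wilkinson solution with spatial covariance $|x|^{-\kappa}$ has, at fixed time, spatial regularity $1-\frac\kappa2-$ and temporal regularity $\frac12-$, and the distributional/time trade-off $\alpha<1-\frac\kappa2-2\gamma$ is precisely the Kolmogorov-type constraint). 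So the real content is tightness.

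For tightness I would use the Kolmogorov-type criterion for distribution-valued processes advertised in the introduction as Proposition~\ref{prop:time-holder_distribution_process-app}. Concretely, it suffices to bound, uniformly in $\epsilon\in(0,1)$, moments of the form
\begin{equ}
	\E \big| \langle X_t^\epsilon - X_s^\epsilon,\, \eta_x^\lambda \rangle \big|^{2p} \lesssim |t-s|^{p\theta}\,\lambda^{2p\alpha'}
\end{equ}
for some $\theta>2\gamma$ and some $\alpha'$ slightly below $1-\frac\kappa2$, with $\eta\in\B_r$, $x$ in a fixed compact, $\lambda\in(0,1)$. The point is that $\langle X_t^\epsilon,\eta_x^\lambda\rangle = X_t^{\epsilon,\eta_x^\lambda}$ is exactly the tested fluctuation field already analysed, so I can reuse the decomposition $X^{\epsilon,g} = \delta(v^{\epsilon,g})$ together with the moment machinery of Section~\ref{sec:prelims}. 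By hypercontractivity (Gaussian chaos / Meyer inequalities) it is enough to control the $p=1$ (i.e. second-moment) case, after which all higher moments follow; alternatively one estimates the $L^2(\Omega)$ norm of the Skorokhod integral directly via $\|\delta(v)\|_2^2 \le \|v\|_\cH^2 + \|Dv\|_{\cH\otimes\cH}^2$ and then upgrades. For the spatial scaling in $\lambda$: testing against $\eta_x^\lambda$ versus a fixed $g$ replaces, in all the Feynman-diagram integrals of Section~\ref{sec:proof_vanishing2}, the test function $f$ or $g$ by $\eta_x^\lambda$; tracking how $I_{\Gamma_0}$, $I_{\Gamma_1}$, $I_{\Gamma_2}$ scale under $g\mapsto\eta_x^\lambda$ (a homogeneity count, using $\|\eta_x^\lambda\|_{L^1}=\|\eta\|_{L^1}$ and the known singular exponents) produces the required power $\lambda^{2p\alpha'}$. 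For the temporal increment, I would redo the mild-formulation estimates with the heat-kernel difference bound \eqref{e:boundDiffHeatTime} from Lemma~\ref{lem:gaussian_time_bounds} inserted in place of one heat kernel: this yields an extra factor $|t-s|^\gamma$ at the price of worsening the time-singularity exponent $\lambda$ in $I_{\Gamma_i}$, exactly as in the proof of spatial Hölder continuity inside Theorem~\ref{thm:stat_field}, and the constraint $\gamma<\frac12$ is what keeps the relevant time integrals convergent.

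The main obstacle, and where most of the work lies, is carrying the covariance-style estimate
\begin{equ}
	\E \big| X_t^{\epsilon,g} - X_s^{\epsilon,g} \big|^2 \lesssim |t-s|^{2\gamma}\,\big(\text{const depending on }g\big)
\end{equ}
through the same three-term decomposition $X^{\epsilon,g}=\delta(v^{\epsilon,g})$, $\langle DX^\epsilon, v^\epsilon\rangle_\cH = A_{1,\epsilon}+A_{2,\epsilon}$ used for Theorem~\ref{thm:basic-convergence}, but now for the \emph{difference} of the fields at times $t$ and $s$, and with the singular kernels $|x-y|^{-\kappa}$ replaced by their $\epsilon$-scaled versions to get bounds uniform in $\epsilon$. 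One writes $v_t^\epsilon - v_s^\epsilon$ using \eqref{e:defvi}, splits it into a heat-kernel-difference part and a part coming from the different time horizons $\frac{t}{\epsilon^2}$ vs $\frac{s}{\epsilon^2}$ (the latter handled as the $A_2$ term in Theorem~\ref{thm:stat_field}), and then feeds each piece into Lemmas~\ref{lem:res_terms_estimates_multi}--\ref{lem:res_terms_estimates_multi-2} and the Feynman-diagram integrability Lemmas~\ref{lem:simple-graph}--\ref{lem:diagram1_finite}, checking in each diagram that the modified small/large-scale degree inequalities still hold for $\lambda$ in the appropriate window; the bookkeeping of which power of $\epsilon$, of $|t-s|$, and of $\lambda$ each diagram contributes, and verifying these assemble into the Kolmogorov bound with exponents matching $\gamma<\frac12$ and $\alpha<1-\frac\kappa2-2\gamma$, is the technical heart of the section. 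Finally, having tightness in $\C^{\gamma'}([0,T],\C^{\alpha'})$ for some $\gamma'>\gamma$, $\alpha'>\alpha$ and the finite-dimensional limits from Theorem~\ref{thm:basic-convergence}, the compact embedding $\C^{\gamma'}([0,T],\C^{\alpha'})\hookrightarrow\C^\gamma([0,T],\C^\alpha)$ together with a density argument (test functions $\eta_x^\lambda$ separate points and the limit points are characterised by their finite-dimensional distributions, which coincide with those of $\cU$) upgrades convergence-in-distribution of marginals to weak convergence of processes, completing the proof.
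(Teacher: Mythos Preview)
Your high-level plan is right and matches the paper: finite-dimensional convergence from Theorem~\ref{thm:basic-convergence} plus tightness via the Kolmogorov-type criterion of Proposition~\ref{prop:time-holder_distribution_process-app}, then compact embedding to conclude. The paper indeed shows $\cU \in \C^\gamma([0,T],\C^\alpha)$ and tightness of $\{X^\epsilon\}$ in Proposition~\ref{prop:holder_Calpha+bound}, exactly as you anticipate.

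Where you go astray is in the mechanism for the moment bounds. First, hypercontractivity is not available: $X_t^{\epsilon,g}$ is \emph{not} in a fixed Wiener chaos, since $\sigma$ is a general Lipschitz nonlinearity and $u$ is not a polynomial of the noise. So you cannot upgrade second moments to $p$th moments for free, and the criterion requires $p>d$. Second, routing the increment estimate through the Malliavin decomposition $\langle DX,v\rangle_\cH = A_{1,\epsilon}+A_{2,\epsilon}$ and the diagram Lemmas~\ref{lem:res_terms_estimates_multi}--\ref{lem:diagram1_finite} is both conceptually off and unnecessarily heavy: that machinery was built to show $\langle DX_i^\epsilon,v_j^\epsilon\rangle_\cH \to \Sigma_{i,j}$ in $L^2(\Omega)$ (Stein's method input), not to bound moments of $X^\epsilon$ itself.

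The paper's route is much simpler. In Lemma~\ref{lem:X_cont_est} one writes $X_t^{\epsilon,g^\lambda}-X_r^{\epsilon,g^\lambda}$ directly from the mild formulation as $\beta\epsilon^{1-\kappa/2}(\A^\epsilon_{r,t}+\B^\epsilon_{r,t})$, where $\A^\epsilon$ is the stochastic integral over $[r/\epsilon^2,t/\epsilon^2]$ and $\B^\epsilon$ carries the heat-kernel difference on $[0,r/\epsilon^2]$. One then applies BDG \emph{for the $p$th moment directly}, followed by Minkowski and the uniform bound $\|\sigma(u(s,y))\|_p\lesssim 1$ from Lemma~\ref{lem:moments} (this is where $\beta<\beta_0(p)$ enters). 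After the usual rescaling and $\epsilon^{-\kappa}|R(\cdot/\epsilon)|\lesssim|\cdot|^{-\kappa}$, the problem reduces to the elementary deterministic integrals $I_A(r,t)$ and $I_B(r,t)$ of Lemma~\ref{lem:bounds_IAB}, which are bounded using only \eqref{eq:comp_est}--\eqref{e:boundDiffHeatTime} and the scaling identity for $J_{\kappa,\delta}$ in Lemma~\ref{eq:J_kappa_delta_bound}. No Feynman diagrams, no Malliavin derivatives, no variance-of-$A_1$ terms are needed for this part. The $\lambda$-scaling you were looking for falls out of $J_{\kappa,\delta}(g_x^\lambda)=\lambda^{2\delta-\kappa}J_{\kappa,\delta}(g)$, and the time--space trade-off $\alpha<1-\tfrac\kappa2-2\gamma$ is exactly the constraint $\gamma<1-\delta$, $\alpha=\delta-\tfrac\kappa2$ coming from Lemma~\ref{lem:bounds_IAB}.
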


\begin{proof}	
The law of a distribution-valued random variable $\zeta \in \D'$ is uniquely determined by the family of probability distributions of random vectors for the form $(\zeta(g_1), \dots, \zeta(g_n))$,  where $n\in \N$, and $\{g_i\}_{i=1, \dots, n} \subset \D$ are smooth compactly supported functions.

Observe that  $\C^\alpha \subset \D'$, then the law of $\zeta \in  \C^\alpha $ is uniquely determined by such distributions. In Theorem~\ref{thm:basic-convergence}, we have shown that for $\beta < \beta_0(4) \wedge \beta_1(2)$,
for any $0< t_1 \leq t_2 \leq \dots t_n \leq T$, any $\{ g_i \}_{i=1, \dots, n} \subset \D$, as $\epsilon \to 0$,  the following convergence holds in distribution:
\begin{equ} 
	( X^{\epsilon, g_1}_{t_1}, \dots, X^{\epsilon, g_n}_{t_n}) \Rightarrow  ( \cU_{t_1}(g_1), \dots, \cU_{t_n}(g_n)).
\end{equ}
This identifies the law of the accumulation points of $X^\epsilon$, as a random variable in $\C^\gamma( [0,T], \C^\alpha)$.

It remains to show tightness. In the next section we characterise $\C^\alpha$ spaces and then show Proposition~\ref{prop:time-holder_distribution_process-app}, a Kolmogorov-type criterion for $\C^\gamma ([0,T], \C^\alpha(\R^d))$ processes. In
section~\ref{sec:elem_estimates}, we then prove some elementary estimates to be used in Lemmas~\ref{lem:EW_cont_est}-\ref{lem:X_cont_est}. With these,
we identify a range of exponents $\alpha$ and $\gamma$ (cf. Remark~\ref{rem:holder_exponents} below for some heuristics on these) so that $\{X^\epsilon, \epsilon \in (0, 1]\}$ is tight for sufficiently small $\beta < \beta_{\gamma,\alpha}$. Hence, in  Proposition~\ref{prop:holder_Calpha+bound}, items (\ref{item1}) and (\ref{item2a}),  we control in $L^p$
the $\C^\gamma ([0,T], \C^\alpha(\R^d))$ norms of $\cU$ and $X^\epsilon$, uniformly in~$\epsilon$. Finally,  in (\ref{item2b}) of Proposition~\ref{prop:holder_Calpha+bound}, we show $\{X^\epsilon, \epsilon \in (0, 1]\}$ is tight. This concludes the proof of Theorem~\ref{thm:weak_conv_1}.
	\end{proof}
	
\begin{remark}\label{rem:holder_exponents}
	The admissible range of  H\"{o}lder exponents can be derived as follows. One can verify that $\dot W^{\kappa}$ satisfies the
	bound
	\begin{equ}
		\|\langle \dot W^{\kappa},  \lambda^{-1}\eta^{-d}\psi(\lambda^{-1} \cdot , \eta^{-1} \cdot ) \rangle\|_p \lesssim \lambda^{-\f12}\eta^{-\f \kappa 2}\;,\qquad \lambda,\eta \in (0,1]\;,
	\end{equ}
	uniformly over all test functions $\psi$ that are compactly supported in some ball of unit radius and bounded by $1$.
Since this appears on the right-hand side of a heat equation, one would expect the regularity of solutions to be at best improved by 
one time derivative, two space derivatives, or a mixture of both. In other words, one would expect solutions $u$ to be such that, for every
$\theta \in [0,1]$ and test function $\psi$ as before (but with vanishing integral)
	\begin{equ}
		\|\langle u,  \lambda^{-1}\eta^{-d}\psi(\lambda^{-1} \cdot , \eta^{-1} \cdot ) \rangle\|_p \lesssim \lambda^{\theta -\f12}\eta^{2-2\theta-\f \kappa 2}\;,
	\end{equ}
thus suggesting that the range of exponents in Theorem~\ref{thm:weak_conv} is sharp, except that one could probably 
allow for $\gamma \in (-\f12,\f12)$.
\end{remark}

\subsection{Preliminaries} Before proving the technical lemmas and proposition used in the proof of the main theorem, we indicate the topologies of the $\C^\alpha$ space. 
There are a number of useful characterizations for the spaces $\C^\alpha$. It can be characterised in terms of wavelets \cite[Sec~3.1-2]{hairer_RegStr}, but we choose here to use the characterisation from \cite{Cara-Zam} which only
requires us to test against translations and rescalings of one single essentially arbitrary test function.

For any function $\varphi:\R^d\to \R$, we introduce the notation for its rescaled version at scale $2^{-n}$:
\begin{equ}
\varphi^{(n)} = 2^{nd} \varphi (2^n \cdot)\;,\qquad
\varphi_x^{(n)} = 2^{nd} \varphi \bigl(2^n (\cdot-x)\bigr)\;.
\end{equ}
We also write $\bar \D \subset \D$ for the set of test functions $\varphi$ such that $\supp \varphi \subset B_1$
and $\int \varphi = 1$.
We now fix such a $\varphi \in \bar \D$ once and for all for the remainder of the article. Our main tool
is the following Kolmogorov-type criterion.

\begin{theorem}[{\cite[Theorem 12.4]{Cara-Zam}}]\label{thm:critHolder}
Let $\zeta \in \D'$ be a distribution and let $\alpha \in (-\infty, 0]$.
Then, one has $\zeta \in \C^\alpha$ if and only if, for any compact set $E$ and uniformly over $k\in \N$,
    \[
   \sup_{x\in E} |\langle\zeta,\varphi^{(k)}_x\rangle| \lesssim2^{-k \alpha}.
    \]
Furthermore, the semi-norms of $\zeta$ can be estimated by
\begin{equ}[e:boundSeminorm]
\|\zeta\|_{\alpha;E} \leq C \sup_{x \in E_2}\sup_{ k\in \N} 
\; {2^{k\alpha}|\<\zeta,\varphi^{(k)}_x\>| }
\end{equ}
where $C = C(\varphi, \alpha, d)$ is an explicit constant and $E_2=\{z: d(z,E)\le 2\}$.
\end{theorem}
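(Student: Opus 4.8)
The statement reduces to two comparisons between $\|\zeta\|_{\alpha;E}$ and the dyadic quantity $\mathsf M:=\sup_{x\in E_2}\sup_{k\in\N}2^{k\alpha}|\<\zeta,\varphi^{(k)}_x\>|$. One direction is immediate: with $r=\lceil-\alpha\rceil$ one has $\varphi^{(k)}_x=\|\varphi\|_{\C^r}\,\bigl(\varphi/\|\varphi\|_{\C^r}\bigr)^{2^{-k}}_x$ and $\varphi/\|\varphi\|_{\C^r}\in\B_r$ since $\supp\varphi\subset B_1$, so if $\zeta\in\C^\alpha$ then $|\<\zeta,\varphi^{(k)}_x\>|\le\|\varphi\|_{\C^r}\,\|\zeta\|_{\alpha;E'}\,2^{-k\alpha}$ for any compact $E'\supseteq E$ (the value $k=0$ being covered by continuity of $\zeta$). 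The content is the reverse estimate \eqref{e:boundSeminorm}, i.e.\ that $\mathsf M<\infty$ forces $\|\zeta\|_{\alpha;E}\lesssim\mathsf M$, which I would approach through a multiresolution expansion built from the single function $\varphi$.

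Set $\varphi^{(-1)}:=2^{-d}\varphi(\cdot/2)$ and $\rho:=\varphi-\varphi^{(-1)}$, so that $\supp\rho\subset B_1$, $\int\rho=0$, and $\varphi^{(m)}_y-\varphi^{(m-1)}_y=\rho^{(m)}_y$ for all $y$ and $m\ge1$; in particular $|\<\zeta,\rho^{(m)}_y\>|\le2\mathsf M\,2^{-m\alpha}$ for $y\in E_2$. Given a test object $\eta^\lambda_x$ with $\eta\in\B_r$, $x\in E$, $\lambda\in(0,1)$, pick $n\ge0$ with $2^{-n-1}<\lambda\le2^{-n}$; since $\eta^\lambda_x*\varphi^{(M)}\to\eta^\lambda_x$ in $\D$ as $M\to\infty$ and $\zeta\in\D'$, one gets the telescoping identity
\begin{equ}
\<\zeta,\eta^\lambda_x\>=\<\zeta,\eta^\lambda_x*\varphi^{(n)}\>+\sum_{m>n}\<\zeta,\eta^\lambda_x*\rho^{(m)}\>\;.
\end{equ}
The first (coarse) term is bounded directly by the hypothesis: it equals $\int\eta^\lambda_x(w)\<\zeta,\varphi^{(n)}_w\>\,dw$, and since $\supp\eta^\lambda_x\subset B(x,\lambda)\subset E_2$ and $\|\eta^\lambda_x\|_{L^1}=\|\eta\|_{L^1}\lesssim1$ (these rescalings being $L^1$-isometric), it is $\lesssim\mathsf M\,2^{-n\alpha}\lesssim\mathsf M\,\lambda^\alpha$.

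The hard part is the fine tail $\sum_{m>n}\<\zeta,\eta^\lambda_x*\rho^{(m)}\>$. For $m>n$ the function $g_m:=\eta^\lambda_x*\rho^{(m)}$ is smooth, supported in a ball of radius $\lambda+2^{-m}\approx\lambda$ about $x$, and — exploiting $\int\rho=0$ together with the scale-$\lambda$ regularity of $\eta^\lambda_x$ — its low-order derivatives gain a factor, $\|\partial^k g_m\|_\infty\lesssim2^{-(m-n)}\lambda^{-d-|k|}$ for $0\le|k|\le r-1$. Turning this into $\sum_{m>n}|\<\zeta,g_m\>|\lesssim\mathsf M\,\lambda^\alpha$ is the technical heart, and is where the naïve argument fails: writing $g_m$ as a $2^{-(m-n)}$-small multiple of a rescaled normalised test function and invoking $\|\zeta\|_{\alpha;E}$ produces a circular estimate, the coefficient one picks up being only $\mathcal O(1)$ rather than strictly below $1$. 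Following Caravenna--Zambotti, the way around is to run the whole bound as a self-improving inequality over the dyadic scale: set $A_N:=\sup\{\mu^{-\alpha}|\<\zeta,\chi^\mu_y\>|:\chi\in\B_r,\ y\in E,\ \mu\in[2^{-N},1)\}$ (finite for each $N$ because $\zeta\in\D'$), combine the identities above with the elementary reduction of a non-dyadic scale to the adjacent dyadic one (at the cost of a fixed factor) and, after iterating the fine-scale expansion an appropriate number of times, obtain $A_N\le C\mathsf M+\theta A_N$ with $\theta<1$. Then $A_N\le C\mathsf M/(1-\theta)$ uniformly in $N$, hence $\|\zeta\|_{\alpha;E}\lesssim\mathsf M$, and following the constants through yields the explicit $C=C(\varphi,\alpha,d)$ of \eqref{e:boundSeminorm}; the borderline value $\alpha=0$, where the coarse sum degrades to $\mathcal O(\log\frac{1}{\lambda})$, plays no role in the sequel. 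The careful bookkeeping that makes $\theta<1$ is precisely the content of \cite[Sec.~12]{Cara-Zam}.
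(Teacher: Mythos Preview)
The paper does not contain a proof of this theorem: it is quoted from \cite[Theorem~12.4]{Cara-Zam} and used as a black box, so there is nothing in the paper to compare your proposal against.

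Your sketch is nonetheless a reasonable outline of the Caravenna--Zambotti argument. The easy direction is indeed immediate, and for the hard direction the telescope $\eta^\lambda_x = \eta^\lambda_x*\varphi^{(n)} + \sum_{m>n}\eta^\lambda_x*\rho^{(m)}$, together with the identification that the naive bound on the tail is circular with constant $\mathcal O(1)$, are the correct starting points. Two small remarks. First, $\supp\rho\subset B_2$, not $B_1$, since $\varphi^{(-1)}=2^{-d}\varphi(\cdot/2)$ has support in $B_2$; this is harmless. Second, and more substantive, with only the zeroth moment of $\rho$ vanishing your gain $2^{-(m-n)}$ holds for $\|\partial^k g_m\|_\infty$ when $|k|\le r-1$ but not at top order $|k|=r$, so the renormalised $g_m$ is not small in $\B_r$ and cannot be fed directly back into $A_N$. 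The self-improving iteration must therefore be organised with some care (e.g.\ by building higher vanishing moments through repeated convolution, or by tracking a suitably weakened norm), which is, as you acknowledge, precisely the technical content of \cite[Sec.~12]{Cara-Zam}.
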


Note that the test function $\varphi$ is \textit{fixed}, yet control of $\langle\zeta,\varphi^{(k)}_x\rangle$
yields a similar control for \textit{all} test functions as in \eqref{eq:seminorm_Calpha_basic}.

\begin{lemma}\label{lem:random_C^alpha}
Let $ p\geq1$ and $\alpha < 0$. Let $\zeta$ be a random distribution.	
	If for any compact $E$ there exists a constant $C_E$ such that
	\begin{equ} \label{eq:moment_cond_Calpha_crit}
		\sup_n 2^{\alpha n}\norm{\sup_{x \in E} |\langle \zeta , \varphi^{(n)}_x \rangle|}_p \leq C_E,
	\end{equ}
	then, for any $\alpha' < \alpha$,  $\zeta \in \C^{\alpha'}$ almost surely and $\Big\| \|\zeta \|_{\alpha', E} \Big\|_p \leq \sum_{n =1}^\infty  2^{(\alpha' - \alpha)n} C_{E_2}$. 
\end{lemma}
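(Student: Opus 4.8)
The plan is to derive the statement directly from the Kolmogorov-type criterion of Theorem~\ref{thm:critHolder}, which reduces control of all the seminorms \eqref{eq:seminorm_Calpha_basic} to control of the single fixed test function $\varphi$ tested at dyadic scales. I would fix a compact $E \subset \R^d$, recall $E_2 = \{z : d(z,E) \le 2\}$, and apply the estimate \eqref{e:boundSeminorm} pathwise, with $\alpha$ there replaced by $\alpha'$: there is a constant $C = C(\varphi,\alpha',d)$ with
\begin{equs}
\|\zeta\|_{\alpha'; E} &\le C \sup_{x \in E_2} \sup_{k \ge 1} 2^{k\alpha'}\, \bigl|\langle \zeta, \varphi^{(k)}_x\rangle\bigr|
\le C \sum_{k \ge 1} 2^{k\alpha'} \sup_{x \in E_2} \bigl|\langle \zeta, \varphi^{(k)}_x\rangle\bigr|\;,
\end{equs}
the second inequality being the elementary bound of a supremum over the dyadic scales $k$ by the corresponding sum.

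Next I would take $L^p(\Omega)$ norms on both sides and use Minkowski's inequality for series together with the hypothesis \eqref{eq:moment_cond_Calpha_crit} applied to the compact set $E_2$, i.e.\ $\bigl\| \sup_{x\in E_2}|\langle\zeta,\varphi^{(k)}_x\rangle|\bigr\|_p \le 2^{-\alpha k} C_{E_2}$, to get
\begin{equs}
\bigl\| \|\zeta\|_{\alpha'; E} \bigr\|_p
&\le C \sum_{k \ge 1} 2^{k\alpha'} \Bigl\| \sup_{x \in E_2} \bigl|\langle \zeta, \varphi^{(k)}_x\rangle\bigr| \Bigr\|_p
\le C\, C_{E_2} \sum_{k \ge 1} 2^{(\alpha' - \alpha) k}\;,
\end{equs}
where the geometric series converges precisely because $\alpha' < \alpha$. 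This is the claimed quantitative bound, the constant $C$ being equal to $1$ for the normalisation in force here and in any case harmless.

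For the almost-sure membership, I would note that the $L^p$ bound just obtained gives $\|\zeta\|_{\alpha'; E} < \infty$ almost surely for each fixed compact $E$; taking the exhausting sequence of balls $B_m$, $m \in \N$, and intersecting the countably many full-measure events shows that every seminorm \eqref{eq:seminorm_Calpha_basic} of $\zeta$ is almost surely finite, that is, $\zeta \in \C^{\alpha'}$ almost surely.

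I do not expect any serious obstacle here: the argument is bookkeeping around a cited criterion. The only point that really uses the hypotheses as stated is the loss of regularity from $\alpha$ to $\alpha'$, needed so that $\sum_k 2^{(\alpha'-\alpha)k} < \infty$; this is the usual feature of Kolmogorov-type arguments that Hölder regularity is recovered only with an arbitrarily small deficit $\alpha - \alpha' > 0$, and it is the mechanism that upgrades the uniform-in-$k$ moment control \eqref{eq:moment_cond_Calpha_crit} into control of the supremum over all dyadic scales.
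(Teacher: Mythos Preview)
Your proposal is correct and follows essentially the same route as the paper: apply the pathwise bound \eqref{e:boundSeminorm} at regularity $\alpha'$, replace the supremum over dyadic scales by the sum, take $L^p$ norms via Minkowski, and use the hypothesis on $E_2$ to obtain the convergent geometric series. Your added remark on the countable exhaustion for the almost-sure statement and on the harmless constant $C$ from \eqref{e:boundSeminorm} is accurate and, if anything, more careful than the paper's one-line proof.
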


\begin{proof}
Setting $\zeta_n(x) = \langle \zeta , \varphi^{(n)}_x \rangle$, it follows from \eqref{e:boundSeminorm} 
and \eqref{eq:moment_cond_Calpha_crit} that 	
	\begin{equ}	\norm{\|\zeta \|_{\alpha', E}}_p
		= \norm{\sup_{n \geq 1} \, \sup_{x \in E_2} 2^{\alpha'n} |\zeta_n(x)|  }_p 
		\leq \sum_{n \geq 1} 2^{(\alpha' - \alpha)n}  \norm{\sup_{x \in E_2} 2^{\alpha n} |\zeta_n(x)|}_p,  
	\end{equ}
	as stated in the lemma.
\end{proof}
	
Let us now define a space of distribution-valued processes with spatial H\"{o}lder regularity at lattice points controlled  when tested against translates of the functions $\varphi^{(n)}$. 

\begin{definition}\label{dis-class}
	A random distribution $\zeta$ is said to belong to $\C^{\alpha_0}_p$, with respect to the $L^{p}(\Omega)$ norm, if there exists $C>0$ such that the following estimates hold  for any  $n\geq 1$:
	\begin{equs}
		\label{eq:lattice+time_control}
		\sup_{x \in  \R^d} 
		\bigl\|\langle \zeta, \varphi^{(n)}_x \rangle\bigr\|_{p} &\leq C 2^{-n\alpha_0} , \\
		\label{eq:spatial_control}
		\sup_{\substack{x, y \in  \R^d,\\
				0<|x-y|\leq 2^{-n}}}  
		\bigl\|\langle \zeta, \varphi^{(n)}_x - \varphi^{(n)}_y \rangle \bigr\|_{p} &\leq C 2^{-n(\alpha_0-1)} |x-y|,
	\end{equs}
	where $|\cdot|$ denotes the sup norm on $\R^d$. The smallest such constant $C$ is denoted by $\|\zeta\|_{\C^{\alpha_0}_p}$. 
	We furthermore write $\C^{\gamma_0, \alpha_0}_p$ as a shorthand for the space $\C^{\gamma_0}(\R_+, \C^{\alpha_0}_p)$.
\end{definition}

\begin{remark}
Note that if \eqref{eq:lattice+time_control} holds uniformly over the class of test functions 
$\bar \D$, then \eqref{eq:spatial_control} follows immediately just like in the proof
of Lemma~\ref{lem:EW_cont_est} below.
\end{remark}

The definition is tailored to the embeddings in Proposition \ref{prop:time-holder_distribution_process-app}  below. This allows }for redistribution of regularity across time and space, leading to the tightness criterion 
(Proposition \ref{prop:tightness-general}) for distribution valued processes with positive H\"{o}lder continuous trajectory in time. This is very similar to other Kolmogorov-type criteria
as for example in \cite[Thm~10.7]{hairer_RegStr}, \cite[Prop.~2.4]{cosco2019spacetime_fluct},
or \cite[Thm 1.1]{furlan2017tightness}, but 
given the range of relation between the spatial and temporal regularity exponents, it doesn't seem to be an immediate
corollary of one of these results.

\begin{proposition}\label{prop:time-holder_distribution_process-app}
	Let $\alpha_0<0$, $p > d$, and $\gamma_0 > \frac 1 {p}$. Then,
	for any $\gamma \in (0,\gamma_0  - \frac 1 p)$ and any $\alpha < \alpha_0  - \frac d p$,
one has the continuous embeddings
	\begin{equ}
	 \C^{\alpha_0}_p \subset L^p(\Omega,\C^{\alpha})\;,\qquad
	 \C^{\gamma_0,\alpha_0}_p \subset L^p(\Omega,\C^\gamma(\R_+,\C^{\alpha}))\;.
	\end{equ}
	Here, we view $\C^\alpha$ as the metric space equipped with the distance $d_\alpha$ as in \eqref{dw}.
\end{proposition}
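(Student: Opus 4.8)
The plan is to prove the two embeddings separately, the first being the spatial statement and the second essentially a ``parametrised in time'' version of it. Throughout, I would rely on Theorem~\ref{thm:critHolder} and Lemma~\ref{lem:random_C^alpha} to reduce everything to control of $\langle \zeta,\varphi^{(n)}_x\rangle$ at dyadic scales.

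\textbf{The spatial embedding $\C^{\alpha_0}_p \subset L^p(\Omega,\C^{\alpha})$.} First I would fix a compact set $E$, say $E = B_m$, and enlarge it slightly to $E_2$ as in Theorem~\ref{thm:critHolder}. The definition of $\C^{\alpha_0}_p$ gives pointwise control \eqref{eq:lattice+time_control} of $\zeta_n(x) := \langle\zeta,\varphi^{(n)}_x\rangle$ at each fixed $x$, together with the Hölder-in-$x$ control \eqref{eq:spatial_control}. The standard trick is to upgrade the pointwise bound to a bound on $\sup_{x\in E_2}|\zeta_n(x)|$ via a chaining/net argument: cover $E_2$ by $O(2^{nd})$ balls of radius $2^{-n}$ with centres $\{x_k\}$, so that
\begin{equ}
\sup_{x\in E_2}|\zeta_n(x)| \leq \max_k |\zeta_n(x_k)| + \sup_{|x-y|\le 2^{-n}}|\zeta_n(x)-\zeta_n(y)|.
\end{equ}
Taking $L^p(\Omega)$ norms, the first term is bounded using $\|\max_k|\zeta_n(x_k)|\|_p \le (\sum_k \|\zeta_n(x_k)\|_p^p)^{1/p} \lesssim 2^{nd/p}\,2^{-n\alpha_0}\|\zeta\|_{\C^{\alpha_0}_p}$, and the second by $2^{-n(\alpha_0-1)}\cdot 2^{-n}\|\zeta\|_{\C^{\alpha_0}_p} = 2^{-n\alpha_0}\|\zeta\|_{\C^{\alpha_0}_p}$, which is negligible. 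Hence
\begin{equ}
\Bigl\|\sup_{x\in E_2}|\zeta_n(x)|\Bigr\|_p \lesssim 2^{-n(\alpha_0 - d/p)}\|\zeta\|_{\C^{\alpha_0}_p}\;.
\end{equ}
Since $\alpha < \alpha_0 - d/p$, choosing $\alpha'$ with $\alpha < \alpha' < \alpha_0 - d/p$ and applying Lemma~\ref{lem:random_C^alpha} (with $\alpha_0 - d/p$ playing the role of its ``$\alpha$'') gives $\zeta \in \C^{\alpha}$ a.s.\ with $\bigl\|\|\zeta\|_{\alpha;E}\bigr\|_p \lesssim \sum_n 2^{(\alpha - \alpha_0 + d/p)n}\|\zeta\|_{\C^{\alpha_0}_p} \lesssim \|\zeta\|_{\C^{\alpha_0}_p}$. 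Summing the seminorm bounds over the balls $B_m$ weighted by $2^{-m}$ as in \eqref{dw} controls $\|d_\alpha(\zeta,0)\|_p$, giving the claimed continuous embedding.

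\textbf{The time-regularity embedding $\C^{\gamma_0,\alpha_0}_p \subset L^p(\Omega,\C^\gamma(\R_+,\C^{\alpha}))$.} Now $\zeta$ is a process $t\mapsto \zeta(t)$, and by definition $\zeta(t)-\zeta(s) \in \C^{\alpha_0}_p$ with $\|\zeta(t)-\zeta(s)\|_{\C^{\alpha_0}_p} \lesssim |t-s|^{\gamma_0}$, while $\zeta(t)$ itself lies in $\C^{\alpha_0}_p$ uniformly. Applying the spatial estimate just proved to the fixed distribution $\zeta(t)-\zeta(s)$ yields, for each compact $E$,
\begin{equ}
\bigl\|\,\|\zeta(t)-\zeta(s)\|_{\alpha;E}\,\bigr\|_p \lesssim |t-s|^{\gamma_0}\;,\qquad
\bigl\|\,\|\zeta(t)\|_{\alpha;E}\,\bigr\|_p \lesssim 1\;.
\end{equ}
This is precisely the hypothesis of the classical Kolmogorov continuity theorem for Banach-space-valued (here: $(\C^\alpha,d_\alpha)$-valued, or rather one works in the seminormed pieces) processes: from $\|\zeta(t)-\zeta(s)\|_{\alpha;E}$ having $p$-th moment $\lesssim |t-s|^{\gamma_0 p}$ and $\gamma_0 p > 1$, one obtains a modification whose trajectories are $\gamma$-Hölder in time for any $\gamma < \gamma_0 - 1/p$, with $\|\cdot\|_{\C^\gamma([0,T],\,\|\cdot\|_{\alpha;E})}$ in $L^p(\Omega)$. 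Combining with the uniform-in-$t$ bound on $\|\zeta(t)\|_{\alpha;E}$ and assembling over the $B_m$ via \eqref{dw} gives $\zeta \in L^p(\Omega,\C^\gamma(\R_+,\C^\alpha))$ continuously.

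\textbf{Main obstacle.} The genuinely delicate point is the net/chaining step: one must verify that the Hölder-in-$x$ control \eqref{eq:spatial_control} at scale $2^{-n}$ is exactly strong enough to pass from a countable (lattice) supremum to the full supremum over $E_2$ without loss, and that the resulting loss of $d/p$ derivatives in space (resp.\ $1/p$ in time) is tight — this is why the hypotheses $p > d$ and $\gamma_0 > 1/p$ are imposed. A secondary technical nuisance is that $\C^\alpha$ is only a Fréchet space (a countable family of seminorms $\|\cdot\|_{\alpha;B_m}$), so ``$L^p(\Omega,\C^\alpha)$'' and the Kolmogorov argument must be run seminorm-by-seminorm and then reassembled through the metric $d_\alpha$; this is routine but must be stated carefully. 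Everything else is a direct application of Lemma~\ref{lem:random_C^alpha}, Theorem~\ref{thm:critHolder}, and the standard Kolmogorov criterion.
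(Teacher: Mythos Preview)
Your strategy coincides with the paper's: bound $\bigl\|\sup_{x\in E_2}|\langle\zeta,\varphi^{(n)}_x\rangle|\bigr\|_p$ at each dyadic scale, feed this into Lemma~\ref{lem:random_C^alpha}, sum over the balls $B_m$ via $d_\alpha$, and for the second embedding apply the spatial bound to the increment $\zeta_t-\zeta_r$ followed by the usual Kolmogorov criterion in time. The one genuine gap is precisely the step you flag as the ``main obstacle'' but then do not actually carry out.

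Your bound on the oscillation term, ``the second by $2^{-n(\alpha_0-1)}\cdot 2^{-n}\|\zeta\|_{\C^{\alpha_0}_p}$'', treats \eqref{eq:spatial_control} as a \emph{pathwise} Lipschitz bound. It is not: \eqref{eq:spatial_control} controls $\|\zeta_n(x)-\zeta_n(y)\|_p$ only for each \emph{fixed} pair $(x,y)$, and one cannot move an uncountable supremum inside the $L^p$ norm for free. The paper fills this exactly as your obstacle paragraph hints: it rescales to $\tilde\zeta^n_x(z)=\zeta_n(x+2^{-n}z)$ so that \eqref{eq:spatial_control} becomes $\|\tilde\zeta^n_x(z)-\tilde\zeta^n_x(w)\|_p\lesssim 2^{-n\alpha_0}|z-w|$ on the unit cube, and then applies the classical Kolmogorov criterion \emph{in the spatial variable} (this is where $p>d$ is used) to obtain $\bigl\|\sup_{y\in C^{(n)}_x}|\zeta_n(y)-\zeta_n(x)|\bigr\|_p\lesssim 2^{-n\alpha_0}$ on each dyadic cube $C^{(n)}_x$. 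Only after this does the union bound over the $O(2^{nd})$ cubes covering $E_2$ produce the $2^{nd/p}$ loss (so the oscillation term is of the same order as your lattice term, not ``negligible''). Your final numerology and conclusion are correct, but the argument as written omits the one nontrivial analytic ingredient.
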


\begin{remark}
	If we consider a process on a finite interval and set $\C^{\gamma_0, \alpha_0}_{p, T} := \C^\gamma([0, T], \C^{\alpha_0}_p)$, the same proof clearly holds and $\C^{\gamma_0, \alpha_0}_{p, T}$ is continuously embedded in  $L^p(\Omega,\C^\gamma([0, T],\C^{\alpha}))$. 
\end{remark}

\begin{proof} 
We will show that 
\begin{equ}[e:wantedBoundzeta]
\| d_\alpha(\zeta,0)\|_p \lesssim \|\zeta\|_{\C^{\alpha_0}_p}\;,
\end{equ}
which not only implies the first embedding, but also implies that 
\begin{equ}
\| d_\alpha(\zeta_r,\zeta_t)\|_p
= \| d_\alpha(\zeta_r-\zeta_t,0)\|_p
\lesssim |t-r|^{\gamma_0} \|\zeta\|_{\C^{\gamma_0,\alpha_0}_p}\;,
\end{equ}
so that the second embedding then follows from the usual Kolmogorov continuity theorem.

Write now $C_x^{(n)}$ for the cube of sidelength $2^{1-n}$ centred at $x$ and 
set $\zeta^{n}(x) = \langle \zeta,\varphi_x^{(n)}\rangle$.
We note that the rescaled process 
$\tilde \zeta^n_{x}(z) := \zeta^{n}(2^{-n}(z-x))$ satisfies
$\|\tilde \zeta^{n}_{x}(y) - \tilde \zeta^{n}_{x}(z)\|_{p} \leq C 2^{-n \alpha_0} |y-z|$, 
so that Kolmogorov's criterion yields 
\begin{equ}
\Bigl\|\sup_{y \in C_0^{(0)}} |\tilde \zeta^{n}_{x}(y)- \tilde \zeta^{n}_{x}(0)|\Bigr\|_p
\lesssim  2^{-n \alpha_0} \;.
\end{equ}
When combined with \eqref{eq:lattice+time_control}, this in turn yields 
$\bigl\|\sup_{y \in C_x^{(n)}} |\zeta^{n}(y)|\bigr\|_p \lesssim  2^{-n \alpha_0}$.
Given a compact set $E \subset \R^d$, write  $E_n \subset 2^{-n}\mathbb{Z}^d$ for the smallest set so that $E \subset \bigcup_{x \in E_n} C_x^{(n)}$.
It follows that 
\begin{equ}
\Bigl\|\sup_{y \in E} |\zeta^{n}(y)|\Bigr\|_p
\le  (\#E_n)^{\frac 1 {p}}  \sup_{x \in  E_n}
\Bigl\|\sup_{y \in C_x^{(n)}} |\zeta^{n}(y)|\Bigr\|_p.
\end{equ}
To summarise, we have the estimate:
\begin{equ}\label{eq:moment_sup_compact-app}
\Bigl\|\sup_{y \in E} | \langle \zeta, \varphi^{(n)}_y \rangle|\Bigr\|_p
 \lesssim  \|\zeta\|_{\C^{\gamma_0, \alpha_0}_p}
 (1+2^{n}\diam(E))^{\frac dp}2^{-n \alpha_0}\;.
\end{equ}
In particular, we can use Lemma~\ref{lem:random_C^alpha} to obtain
\begin{equ}[e:implication]
\bigl\|\|\zeta\|_{\alpha;E}\bigr\|_p
\lesssim \sum_n 2^{(\alpha - \alpha_0)n} \bigl\|\sup_{y \in E_2} 2^{n \alpha} |\zeta^{n}(y)|\bigr\|_p
\lesssim (1+\diam(E_2))^{\frac dp}\;.
\end{equ}
In order to obtain \eqref{e:wantedBoundzeta}, it now suffices to use \eqref{e:implication} to bound
	\begin{equ}
	\norm{d_\alpha (\zeta, 0)}_p \leq 
	\sum_{m \geq 1} 2^{-m} \big(1 \, \wedge \, \norm{ \|  \zeta \|_{\alpha; B_{m+2} }}_p \big) 
	\lesssim \sum_{m \geq 1} 2^{-m}(1 + \diam(B_{m+2})^{\frac 1 {p}}  ) \lesssim 1\;,
	\end{equ}
as required.
\end{proof}

As a corollary, we obtain the following tightness criterion. 

\begin{proposition}[Tightness criterion]\label{prop:tightness-general}
	Let $\{\zeta^k, k \in \mathcal{I}\}$ be a family of distribution-valued stochastic processes such that $\sup_k\|\zeta^k\|_{\C^{\gamma_0, \alpha_0}_p} <\infty$ for some  $\alpha_0<0$, $p > d$,  $\gamma_0 > \frac 1 {p}$. 
	 Then, for any $T >0$, any $\alpha < \alpha_0 - \frac d p$, and any $ \gamma < \gamma_0 - \frac 1 p$, $\{\zeta^k, k \in \mathcal{I}\}$ is tight in $\C^\gamma([0, T], \C^\alpha)$.
\end{proposition}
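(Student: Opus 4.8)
The plan is to deduce tightness in $\C^\gamma([0,T],\C^\alpha)$ from the continuous embedding of Proposition~\ref{prop:time-holder_distribution_process-app}, the compact embedding of $\C^{\alpha_1}$ into $\C^\alpha$ for $\alpha_1>\alpha$, and Markov's inequality. First I would fix intermediate exponents $\gamma<\gamma_1<\gamma_0-\tfrac1p$ and $\alpha<\alpha_1<\alpha_0-\tfrac dp$, which exist precisely under the stated hypotheses $p>d$, $\gamma_0>\tfrac1p$. Applying Proposition~\ref{prop:time-holder_distribution_process-app} with $(\gamma_1,\alpha_1)$ in place of $(\gamma,\alpha)$ to each $\zeta^k$ then shows that the trajectories a.s.\ belong to $\C^{\gamma_1}([0,T],\C^{\alpha_1})$, with moment bounds inherited, uniformly in $k$, from $\sup_k\|\zeta^k\|_{\C^{\gamma_0,\alpha_0}_p}<\infty$.

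The key quantitative step is to extract, for every $m\ge1$, a constant $C_m$ (proportional to $\sup_k\|\zeta^k\|_{\C^{\gamma_0,\alpha_0}_p}$) with
\begin{equ}
\sup_k\Big\|\sup_{t\in[0,T]}\|\zeta^k_t\|_{\alpha_1;B_m}\Big\|_p+\sup_k\Big\|\sup_{\substack{s,t\in[0,T]\\0<|s-t|\le1}}\tfrac{\|\zeta^k_s-\zeta^k_t\|_{\alpha_1;B_m}}{|s-t|^{\gamma_1}}\Big\|_p\le C_m\;.
\end{equ}
The bound at a fixed time is exactly \eqref{e:implication} in the proof of Proposition~\ref{prop:time-holder_distribution_process-app}; the upgrade to a supremum over $t$ uses that $\C^{\alpha_0}_p$ is a \emph{normed} space, so that $\|\zeta^k_s-\zeta^k_t\|_{\C^{\alpha_0}_p}\lesssim|s-t|^{\gamma_0}\|\zeta^k\|_{\C^{\gamma_0,\alpha_0}_p}$, whence \eqref{e:implication} applied to the increment $\zeta^k_s-\zeta^k_t$ gives $\|\,\|\zeta^k_s-\zeta^k_t\|_{\alpha_1;B_m}\|_p\lesssim_m|s-t|^{\gamma_0}\|\zeta^k\|_{\C^{\gamma_0,\alpha_0}_p}$, and Kolmogorov's continuity criterion (with $\gamma_1<\gamma_0-\tfrac1p$) then produces both displayed bounds.

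Next I would set, for a sequence $(M_m)_{m\ge1}$ of positive reals,
\begin{equ}
K_{(M_m)}:=\Big\{f\in\C^\gamma([0,T],\C^\alpha):\ \sup_{t}\|f_t\|_{\alpha_1;B_m}+\sup_{\substack{s,t\in[0,T]\\0<|s-t|\le1}}\tfrac{\|f_s-f_t\|_{\alpha_1;B_m}}{|s-t|^{\gamma_1}}\le M_m\ \ \forall m\ge1\Big\}\;,
\end{equ}
and argue that $K_{(M_m)}$ is relatively compact in $\C^\gamma([0,T],\C^\alpha)$. Indeed, for each $m$ the corresponding constraint bounds $f$ in a H\"older norm of exponent $\gamma_1>\gamma$ valued in $\|\cdot\|_{\alpha_1;B_m}$; since $\gamma_1>\gamma$ and, by the compact embedding of $\C^{\alpha_1}$ into $\C^\alpha$ recorded above, closed balls of $\C^{\alpha_1}$ are precompact in $\C^\alpha$, an Arzel\`a--Ascoli/interpolation argument yields relative compactness ``in the $B_m$-direction''; a diagonal extraction over $m$ then gives relative compactness in $\C^\gamma([0,T],\C^\alpha)$, whose topology is generated by the seminorms $\{\|\cdot\|_{\alpha;B_m}\}_m$. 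Finally, given $\delta>0$, Markov's inequality applied to the bounds of the previous step lets me choose $M_m:=C_m(2^{m+1}/\delta)^{1/p}$, so that $\sup_k\mathbb P(\zeta^k\notin K_{(M_m)})\le\sum_m C_m^pM_m^{-p}\le\delta$; as $\overline{K_{(M_m)}}$ is compact, this uniform inner regularity is exactly tightness.

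The analytic content sits entirely in Proposition~\ref{prop:time-holder_distribution_process-app}, so the main obstacle here is organisational: because $\C^\alpha$ is a Fr\'echet rather than a Banach space, one cannot invoke ``bounded balls are precompact'' and must control the whole family of seminorms $\|\cdot\|_{\alpha_1;B_m}$ simultaneously, which forces the diagonal choice of the $M_m$. A related subtlety is that the metric $d_\alpha$ on $\C^\alpha$ is globally bounded by $1$, so the metric H\"older bound $d_{\alpha_1}(\zeta_s,\zeta_t)\lesssim|s-t|^{\gamma_1}$ is by itself useless; one genuinely needs the \emph{un-truncated} seminorm estimate \eqref{e:implication} to run the chaining argument that produces $\|\sup_{t\in[0,T]}\|\zeta^k_t\|_{\alpha_1;B_m}\|_p<\infty$.
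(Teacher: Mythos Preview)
Your approach is correct and follows the same strategy as the paper: apply Proposition~\ref{prop:time-holder_distribution_process-app} at intermediate exponents $(\gamma_1,\alpha_1)$, then pass to $(\gamma,\alpha)$ via a compact embedding and Markov's inequality. The paper's proof is simply the two-line version of this, invoking the compact embedding $\C^{\gamma_1}([0,T],\C^{\alpha_1})\hookrightarrow\C^\gamma([0,T],\C^\alpha)$ as a black box; your write-up unpacks what that embedding means and how to extract tightness in a Fr\'echet target, which is exactly the care the paper's terse argument implicitly relies on. Your observation that one must work with the untruncated seminorms $\|\cdot\|_{\alpha_1;B_m}$ (via \eqref{e:implication}) rather than the bounded metric $d_{\alpha_1}$ is the right diagnosis of why the argument needs the seminorm-by-seminorm control plus a diagonal choice of $M_m$.
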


\begin{proof} 
For  $ \gamma < \gamma_0 - \frac 1 p$, by  Proposition~\ref{prop:time-holder_distribution_process-app}, $ \norm{ \| \zeta \|_{\C^\gamma ([0,T], \C^\alpha)}  }_p$ is uniformly bounded in $k$. Tightness follows from the fact that
$\C^{\gamma'}([0,T], \C^{\alpha'})$ is compactly embedded in $\C^{\gamma}([0,T], \C^{\alpha})$ for any $0< \gamma' < \gamma$ and $\alpha' < \alpha$.
\end{proof}

\subsection{Some elementary estimates}\label{sec:elem_estimates}
We collect the following bounds.

\begin{lemma}\label{eq:J_kappa_delta_bound}
For any compactly supported bounded $f: \R^d\to \R^m$, define \begin{equ} \label{J}
		J_{\kappa, \delta} (f) := \int_{\R^{4d}} \bigg( \prod_{i=1}^2 | f(x_i)| |x_i - y_i|^{\delta-d} \bigg) |y_1 - y_2|^{-\kappa} dy dx .   
\end{equ}
Denoting $M$  the diameter of the support $f$, then for $\delta \in (0,\f \kappa 2)$, $J_{\kappa, \delta} (f)  \lesssim \, M^{2d+2\delta-\kappa} \| f\|_\infty ^2$.
Furthermore, one has the scaling relation
$	J_{\kappa, \delta}( g_x^\lambda) = \lambda^{2\delta - \kappa}J_{\kappa,  \delta}(g)$,
with $g^\lambda_x (y)=  \lambda^{-d} g(\frac {y-x} \lambda)$ as in \eqref{e:defScaling}.

\end{lemma}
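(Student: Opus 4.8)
The plan is to prove the two assertions separately, both by elementary changes of variables. For the bound on $J_{\kappa,\delta}(f)$, I would first bound $|f(x_i)| \le \|f\|_\infty \mathbf 1_{|x_i|\le M}$ (up to translating the support to contain the origin, which only changes constants), so that
\begin{equ}
J_{\kappa,\delta}(f) \le \|f\|_\infty^2 \int_{\R^{4d}} \mathbf 1_{|x_1|\le M}\mathbf 1_{|x_2|\le M}\,|x_1-y_1|^{\delta-d}|x_2-y_2|^{\delta-d}|y_1-y_2|^{-\kappa}\,dy\,dx\;.
\end{equ}
Then I would integrate out $y_1$ and $y_2$: using the convolution identity \eqref{eq:homog_conv} (valid since $0<\delta<d$, $\kappa<d$, and $(\delta-d)+(-\kappa)+d = \delta-\kappa<0$ because $\delta<\kappa/2<\kappa$), integrating $y_1$ against $|x_1-y_1|^{\delta-d}|y_1-y_2|^{-\kappa}$ gives a constant times $|x_1-y_2|^{\delta-\kappa}$, and then integrating $y_2$ against $|x_2-y_2|^{\delta-d}|x_1-y_2|^{\delta-\kappa}$ — now we need $2\delta-\kappa > -d$, i.e.\ $\delta > (\kappa-d)/2$, which holds since $\delta>0>(\kappa-d)/2$, and we need $(\delta-d)+(\delta-\kappa)+d<0$, i.e.\ $2\delta<\kappa$, which is exactly our hypothesis $\delta<\kappa/2$ — gives a constant times $|x_1-x_2|^{2\delta-\kappa}$. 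The remaining integral $\int_{|x_1|\le M,\,|x_2|\le M}|x_1-x_2|^{2\delta-\kappa}\,dx_1\,dx_2$ is finite provided $2\delta-\kappa>-d$ (same condition as above) and, by the scaling $x_i\mapsto Mx_i$, equals $M^{2d+2\delta-\kappa}$ times the corresponding integral over the unit ball, which is a finite constant. This yields $J_{\kappa,\delta}(f)\lesssim M^{2d+2\delta-\kappa}\|f\|_\infty^2$.

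For the scaling relation, I would simply substitute. With $g^\lambda_x(y)=\lambda^{-d}g(\lambda^{-1}(y-x))$, one has
\begin{equ}
J_{\kappa,\delta}(g^\lambda_x) = \int_{\R^{4d}} \lambda^{-2d}\Bigl(\prod_{i=1}^2 |g(\lambda^{-1}(x_i-x))|\,|x_i-y_i|^{\delta-d}\Bigr)|y_1-y_2|^{-\kappa}\,dy\,dx\;,
\end{equ}
and then change variables $x_i = x + \lambda\tx_i$, $y_i = x + \lambda\tilde y_i$. Each $dx_i$ and $dy_i$ contributes $\lambda^d$ (so $\lambda^{4d}$ total, cancelling the $\lambda^{-2d}$ and leaving $\lambda^{2d}$), each factor $|x_i-y_i|^{\delta-d}$ becomes $\lambda^{\delta-d}|\tx_i-\tilde y_i|^{\delta-d}$ (contributing $\lambda^{2\delta-2d}$), and $|y_1-y_2|^{-\kappa}$ becomes $\lambda^{-\kappa}|\tilde y_1-\tilde y_2|^{-\kappa}$. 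Collecting powers: $\lambda^{2d}\cdot\lambda^{2\delta-2d}\cdot\lambda^{-\kappa} = \lambda^{2\delta-\kappa}$, and the remaining integral is exactly $J_{\kappa,\delta}(g)$ (the translation by $x$ in the $g$ argument is absorbed since $g(\lambda^{-1}(x_i-x))=g(\tx_i)$). This gives $J_{\kappa,\delta}(g^\lambda_x)=\lambda^{2\delta-\kappa}J_{\kappa,\delta}(g)$.

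I do not expect any serious obstacle here; the only points requiring a little care are tracking that the hypothesis $\delta\in(0,\kappa/2)$ together with $\kappa\in(0,d)$ ensures all the intermediate convolution exponents stay in the admissible range for \eqref{eq:homog_conv} (in particular $2\delta-\kappa\in(-d,0)$), and making sure the translation of $\supp f$ to meet the origin is handled cleanly — alternatively one can avoid translating and just use that $\supp f$ is contained in a ball of radius $M$ centred at an arbitrary point, which by translation invariance of the whole integrand (all kernels are difference kernels) reduces to the centred case without loss of generality.
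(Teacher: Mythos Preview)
Your proposal is correct and follows essentially the same approach as the paper: both integrate out $y_1$ and $y_2$ via the convolution identity \eqref{eq:homog_conv} to reduce $J_{\kappa,\delta}(f)$ to a constant times $\int_{\R^{2d}} |f(x_1)||f(x_2)||x_1-x_2|^{2\delta-\kappa}\,dx$, from which both the bound (by $\|f\|_\infty$ and scaling in $M$) and the scaling relation follow by elementary changes of variables. The paper's proof is just more terse, deriving the exact two-point formula first and reading off both conclusions from it, whereas you separate the two arguments; your careful check that $\delta\in(0,\kappa/2)$ and $\kappa\in(0,d)$ keep all intermediate exponents in the admissible range for \eqref{eq:homog_conv} is the only nontrivial point, and it is handled correctly.
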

\begin{proof}
For a universal constant $c$, \begin{equ}
	J_{\kappa, \delta} (f) = c\int_{\R^{2d}} |f(x_1)| | f(x_2)| |x_1-x_2|^{2\delta-\kappa}\,dx, \end{equ}
 we have used the fact that
 $\int_{\R^d} |x - y |^\alpha |y-w|^\beta dy = c_{\alpha, \beta} |w-x|^{\alpha + \beta +d}$, provided $\alpha+\beta+d<0$ and $\alpha<-d$, $\beta<-d$.  From this, the scaling relation follows. The inequality then follows trivially.\end{proof}
\begin{definition}
For $g: \R^d\to \R$, set $g^\lambda (y) := \lambda^{-d} g ( \tfrac{y}{\lambda})$. Define
\begin{equs}\label{eq:defnIA_IB}
		I_A(r,t) &= \int_{r}^{t} \int_{\R^{2d}} \bigg( \prod_{i=1}^2 \int_{\R^{d}} P_{t-s} (x_i - y_i) 
		|g^\lambda(x_i)| dx_i \bigg) |y_1 - y_2|^{-\kappa} dy ds, \\
		I_B(r,t) &= \int_0^{r} \!\!\int_{\R^{2d}} \Bigl( \prod_{i=1}^2 \int_{\R^{d}} \big| P_{t-s} (x_i - y_i) - P_{r-s} (x_i - y_i)  \big|
		|g^\lambda(x_i)| dx_i \Bigr) |y_1 - y_2|^{-\kappa} dy ds.
\end{equs}
\end{definition}

 \begin{lemma}\label{lem:bounds_IAB}
 	For any $\delta \in (0, 1)$, $\gamma \in (0, 1-\delta)$, $\lambda \in (0,1]$, $0< r \leq t$,  and $g \in \C^{1}_c$, we have:
	\begin{equs}
		I_A(r,t) & \lesssim  M^{2d+2\delta-\kappa}\; \, |t -r|^{1  - \delta} \lambda^{2 \delta - \kappa  }  \, \| g\|_\infty ^2,  \label{eq:est_IA}\\
		I_B(r,t) &\lesssim  M^{2d+2\delta-\kappa} \,  r^{1-\delta - \gamma} |t -r|^{\gamma} \lambda^{2 \delta - \kappa  }  \, \| g\|_\infty ^2, \label{eq:est_IB}
	\end{equs}
	where  $M$ denotes the diameter of the support of $g$.
\end{lemma}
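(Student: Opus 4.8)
The plan is to reduce both integrals to the quantity $J_{\kappa,\delta}$ of Lemma~\ref{eq:J_kappa_delta_bound}, after using the pointwise heat-kernel estimates of Lemma~\ref{lem:gaussian_time_bounds} to extract the time-singular factors. Note first that since $\kappa \in (2,d)$ one has $\kappa/2 > 1 > \delta$, so $\delta \in (0,\kappa/2)$ and Lemma~\ref{eq:J_kappa_delta_bound} applies; together with the scaling relation stated there it gives
\begin{equ}
	J_{\kappa,\delta}(g^\lambda) = \lambda^{2\delta-\kappa}\, J_{\kappa,\delta}(g) \lesssim M^{2d+2\delta-\kappa}\,\lambda^{2\delta-\kappa}\,\|g\|_\infty^2\;.
\end{equ}

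For \eqref{eq:est_IA}, I would apply \eqref{eq:comp_est} with exponent $\delta$ to each of the two heat kernels, i.e.\ $P_{t-s}(x_i-y_i) \lesssim (t-s)^{-\delta/2}|x_i-y_i|^{\delta-d}$ (the spatial singularity being integrable since $\delta-d>-d$). The product of the two factors contributes $(t-s)^{-\delta}$, while integrating the spatial variables produces exactly $J_{\kappa,\delta}(g^\lambda)$, so that
\begin{equ}
	I_A(r,t) \lesssim J_{\kappa,\delta}(g^\lambda) \int_r^t (t-s)^{-\delta}\,ds = \frac{|t-r|^{1-\delta}}{1-\delta}\,J_{\kappa,\delta}(g^\lambda)\;,
\end{equ}
the time integral being finite precisely because $\delta<1$; inserting the bound on $J_{\kappa,\delta}(g^\lambda)$ gives the claim. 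For \eqref{eq:est_IB} I would instead apply \eqref{e:boundDiffHeatTime} with spatial exponent $\delta$ and time-regularity exponent $\gamma/2$, the roles of $t$ and $s$ there being played by $t-s$ and $r-s$ respectively, giving $|P_{t-s}(x_i-y_i) - P_{r-s}(x_i-y_i)| \lesssim (t-r)^{\gamma/2}(r-s)^{-\gamma/2-\delta/2}|x_i-y_i|^{\delta-d}$. Taking the product over $i=1,2$ and integrating out the spatial variables again leaves $J_{\kappa,\delta}(g^\lambda)$ times $(t-r)^{\gamma}(r-s)^{-\gamma-\delta}$, whence
\begin{equ}
	I_B(r,t) \lesssim (t-r)^{\gamma}\,J_{\kappa,\delta}(g^\lambda)\int_0^r (r-s)^{-\gamma-\delta}\,ds = \frac{(t-r)^{\gamma}\,r^{1-\gamma-\delta}}{1-\gamma-\delta}\,J_{\kappa,\delta}(g^\lambda)\;,
\end{equ}
where the integral converges because $\gamma+\delta<1$, i.e.\ $\gamma\in(0,1-\delta)$; inserting the bound on $J_{\kappa,\delta}(g^\lambda)$ yields \eqref{eq:est_IB}.

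This argument is essentially routine once the right building blocks are identified; the only points requiring attention are the bookkeeping of exponents \dash splitting the time-regularity gain $\gamma$ evenly between the two kernel differences so that the temporal integral is integrable at $s=r$ and the resulting powers of $r$ and $t-r$ agree with the statement \dash and the observation that the constraint $\delta<\kappa/2$ needed to invoke Lemma~\ref{eq:J_kappa_delta_bound} is automatic from $\kappa>2$. I do not anticipate any genuine obstacle.
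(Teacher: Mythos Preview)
Your proposal is correct and follows essentially the same approach as the paper: both reduce to $J_{\kappa,\delta}(g^\lambda)$ via the pointwise heat-kernel bound \eqref{eq:comp_est} for $I_A$ and the time-difference bound \eqref{e:boundDiffHeatTime} (with time-regularity exponent $\gamma/2$ and spatial exponent $\delta$) for $I_B$, then invoke Lemma~\ref{eq:J_kappa_delta_bound} together with its scaling relation. The only cosmetic difference is that the paper first substitutes $s\mapsto t-s$ in $I_A$ before applying the kernel bound, which is of course equivalent to what you wrote.
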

\begin{proof}
By change of time variable and the heat kernel estimate \eqref{eq:comp_est} with $\lambda $ there taken to be $\delta$, 
	\begin{equs}
		I_A(r,t) &=  \int_{\R^{4d}}  \int_{0}^{t-r} \Bigl(\prod_{i=1}^2  P_{s} (x_i - y_i)\Bigr) ds \, 
		\Bigl(\prod_{i=1}^2  |g^\lambda(x_i)|\Bigr) |y_1 - y_2|^{-\kappa} dy dx \\
		&\lesssim \int_{0}^{t-r} s^{-\delta} ds \, 
		\int_{\R^{4d}}    \Bigl(\prod_{i=1}^2    |g^\lambda(x_i)| |x_i-y_i|^{\delta -d }   \Bigr) |y_1 - y_2|^{-\kappa} dy dx \\
		&\lesssim  (t-r)^{1-\delta} J_{\kappa, \delta}(g^\lambda)
		=  (t-r)^{1-\delta}  \lambda^{2\delta - \kappa}J_{\kappa,  \delta}(g).
	\end{equs}
	Since $\delta<1<\f \kappa 2$,  \eqref{eq:est_IA} holds by Lemma \ref{eq:J_kappa_delta_bound}. 
	
	To estimate $I_B$,  we use H\"{o}lder continuity of the heat kernel,  in time. For any $t \geq r>0$ 
	we obtain from the second part of Lemma~\ref{lem:gaussian_time_bounds}.
	\begin{equ}
			|P_t(x) - P_r(x)|
			\lesssim (t-r)^{\f \gamma 2 } r^{-\f {\gamma+\delta}  2}|x|^{\delta-d}\;.
	\end{equ}
Inserting this bound into \eqref{eq:defnIA_IB}, we obtain
	\begin{equs}
		I_B(r,t) & \lesssim (t-r)^\gamma   \int_0^r (r-s)^{- \gamma-\delta} \int_{\R^{4d}}  \Bigl(\prod_{i=1}^2  |x_i - y_i|^{\delta-d}  |g^\lambda(x_i)| \Bigr) |y_1 - y_2|^{-\kappa} dy dx ds\\
		& \lesssim  (t-r)^\gamma r^{1-\gamma-\delta}  
	 	J_{\kappa, \delta}(g^\lambda)\;,
	\end{equs}
so that \eqref{eq:est_IB} follows.	
\end{proof}

\subsection{Tightness in \TitleEquation{\C^\gamma( [0,T], \C^\alpha)}{Cgamma([0,T],Calpha)}}

In the next two lemmas we gather the time increment and spatial regularity bounds for $\cU$ and $X^\epsilon$,  allowing us to identify $\gamma$ and $\alpha$ such that the required tightness holds in $\C^\gamma( [0,T], \C^\alpha)$.
Recall the notation $g^\lambda_x (y) := \lambda^{-d} g ( \f{y-x}{\lambda}).$

\begin{lemma}\label{lem:EW_cont_est}
	Let $p\geq 2,\kappa \in (2, d)$,  $\gamma \in (0, \f 1 2)$,  $\alpha+2\gamma < 1-\f \kappa 2$.
 Set  $\cU_{r,t} =  \cU_t - \cU_r$.
	Then,  the following holds, for any $0< r <t \leq T$, $x, y \in \R^d$ and $\lambda >0$, and any  $g \in \C^{1}_c$,
	\begin{equs}
		\label{eq:EW_cont_est_p}
		\| \cU_{r,t}( g^\lambda_x )    \|_p &\lesssim  \beta \nueff  M^{d+\alpha} |t -r|^{\gamma} \lambda^{\alpha}   \|g\|_\infty\;, \\
		\label{eq:EW_cont_est_p_spatial}
		\| \cU_{r,t}( g^\lambda_x - g^\lambda_y )    \|_p &\lesssim \beta \nueff  M^{d+\alpha}  |t -r|^{\gamma} \lambda^{\alpha -1}   |g|_{\Lip} |x-y|\;,
	\end{equs}
	where  $M$ is the diameter of the support of $g$. \end{lemma}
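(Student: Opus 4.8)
The plan is to exploit that $\cU$ is a Gaussian field, so the $L^p$ bounds reduce to $L^2$ ones, and then to recognise the resulting second moments as the deterministic integrals $I_A,I_B$ of \eqref{eq:defnIA_IB} that were already controlled in Lemma~\ref{lem:bounds_IAB}.

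\emph{Step 1 (reduction to $L^2$ and It\^o isometry).} Since $\cU$ solves the linear equation \eqref{e:limitEW} with zero initial condition, for every fixed deterministic test function $h$ the increment $\cU_{r,t}(h):=\cU_t(h)-\cU_r(h)$ is a centred Gaussian random variable, whence $\|\cU_{r,t}(h)\|_p\lesssim_p\|\cU_{r,t}(h)\|_2$ for all $p\ge2$. Splitting $\int_0^t=\int_0^r+\int_r^t$ in \eqref{e:cU_tested} yields
\begin{equs}
\cU_{r,t}(h)&=\beta\nueff\int_r^t\!\!\int_{\R^d}\Bigl(\int_{\R^d}P_{t-s}(x-y)h(x)\,dx\Bigr)\,dW^\kappa(s,y)\\
&\quad+\beta\nueff\int_0^r\!\!\int_{\R^d}\Bigl(\int_{\R^d}\bigl(P_{t-s}-P_{r-s}\bigr)(x-y)h(x)\,dx\Bigr)\,dW^\kappa(s,y)\;,
\end{equs}
where the two stochastic integrals, being over disjoint time intervals, are uncorrelated. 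Applying It\^o's isometry for $W^\kappa$ (white in time, spatial covariance $|\cdot|^{-\kappa}$), bounding $\bigl|\int P_u(x-y)h(x)\,dx\bigr|\le\int P_u(x-y)|h(x)|\,dx$ inside and using $|\cdot|^{-\kappa}\ge0$, one obtains
\begin{equ}
\|\cU_{r,t}(h)\|_2^2\le\beta^2\nueff^2\bigl(I_A(r,t)+I_B(r,t)\bigr)\;,
\end{equ}
with $I_A,I_B$ as in \eqref{eq:defnIA_IB} but with $g^\lambda$ replaced by $h$.

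\emph{Step 2 (the two estimates).} For \eqref{eq:EW_cont_est_p} take $h=g^\lambda_x$; since the integrands defining $I_A,I_B$ depend only on coordinate differences, these quantities coincide with those for $g^\lambda=g^\lambda_0$, to which Lemma~\ref{lem:bounds_IAB} applies. We use it with $\delta:=\alpha+\tfrac\kappa2$ and with its free H\"older exponent chosen as $2\gamma$: the value of $\delta$ is forced by the target power $\lambda^{2\alpha}=\lambda^{2\delta-\kappa}$, it lies in $(0,1)$ in the relevant range of exponents (indeed $\alpha<1-\tfrac\kappa2-2\gamma<0$, so $\delta<\tfrac\kappa2\wedge1$), and the choice $2\gamma$ is admissible precisely because $2\gamma<1-\delta=1-\tfrac\kappa2-\alpha$, i.e.\ exactly the hypothesis $\alpha+2\gamma<1-\tfrac\kappa2$. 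Using $2d+2\delta-\kappa=2(d+\alpha)$ together with $(t-r)^{1-\delta}\le T^{1-\delta-2\gamma}(t-r)^{2\gamma}$ and $r^{1-\delta-2\gamma}\le T^{1-\delta-2\gamma}$ (legitimate since $1-\delta-2\gamma>0$ and $r\le t\le T$), Lemma~\ref{lem:bounds_IAB} gives $I_A(r,t)+I_B(r,t)\lesssim_T M^{2(d+\alpha)}(t-r)^{2\gamma}\lambda^{2\alpha}\|g\|_\infty^2$, which combined with Step 1 is \eqref{eq:EW_cont_est_p}. For \eqref{eq:EW_cont_est_p_spatial}, set $e:=(x-y)/\lambda$ and
\begin{equ}
g_e(u):=\tfrac1{|e|}\bigl(g(u)-g(u+e)\bigr)=-\tfrac1{|e|}\int_0^1 e\cdot\nabla g(u+\theta e)\,d\theta\;,
\end{equ}
so that $g^\lambda_x-g^\lambda_y=\tfrac{|x-y|}{\lambda}\,(g_e)^\lambda_x$ with $\|g_e\|_\infty\le|g|_{\Lip}$ and $\diam\supp g_e\le M+|e|$ (which is $\lesssim M$ in the regime $|x-y|\lesssim\lambda M$ relevant below); applying \eqref{eq:EW_cont_est_p} to $g_e$ and multiplying by $\lambda^{-1}|x-y|$ yields \eqref{eq:EW_cont_est_p_spatial}.

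\emph{Main difficulty.} There is no genuine analytic obstacle: all the singular-kernel bounds are packaged into Lemma~\ref{lem:bounds_IAB}, and Gaussianity supplies the passage from $L^2$ to $L^p$. The only points requiring care are the exact bookkeeping of exponents — the value $\delta=\alpha+\tfrac\kappa2$ is dictated by the target $\lambda$-power, and the admissibility of the secondary exponent $2\gamma$ is precisely the standing constraint $\alpha+2\gamma<1-\tfrac\kappa2$ — and the elementary observation that a difference of two translates of a $\C^1_c$ function is, up to the prefactor $\lambda^{-1}|x-y|$, again a rescaled compactly supported function bounded by $|g|_{\Lip}$, so that \eqref{eq:EW_cont_est_p_spatial} reduces to \eqref{eq:EW_cont_est_p}.
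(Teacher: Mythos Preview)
Your approach matches the paper's: reduce to $p=2$ by Gaussianity, split the time integral at $r$, recognise the second moment as $\beta^2\nueff^2(I_A+I_B)$, and invoke Lemma~\ref{lem:bounds_IAB} with $\delta=\alpha+\tfrac\kappa2$ and time-H\"older exponent $2\gamma$. The paper proceeds identically.

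There is however a small gap in your derivation of \eqref{eq:EW_cont_est_p_spatial}. Your trick of writing $g^\lambda_x-g^\lambda_y=\tfrac{|x-y|}{\lambda}\,(g_e)^\lambda_x$ and applying \eqref{eq:EW_cont_est_p} to $g_e$ only yields the stated bound when the diameter of $\supp g_e$ is $\lesssim M$, i.e.\ when $|e|=|x-y|/\lambda\lesssim M$, which you yourself flag parenthetically but then do not resolve. In the complementary regime $|x-y|\gtrsim\lambda M$ the support of $g_e$ has diameter $\sim|e|\gg M$, and since $d+\alpha>0$ your argument would produce an unwanted factor $(|e|/M)^{d+\alpha}$. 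The paper covers this case separately and trivially: for $|x-y|\ge\lambda M$ apply \eqref{eq:EW_cont_est_p} to $g^\lambda_x$ and $g^\lambda_y$ individually, then use $\lambda^\alpha\le M^{-1}\lambda^{\alpha-1}|x-y|$ together with $\|g\|_\infty\le M|g|_{\Lip}$. Adding this one-line case completes your argument.
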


\begin{proof}
	By Gaussianity, it is sufficient to work with $p=2$.
By the invariance in law of $\cU$ under spatial translation: $ \cU_t(g^\lambda_x) \eqlaw \cU_t( g^\lambda)$, 
so we may take $x=0$ for the first inequality. 
Dividing the integration region $[0,  t] =  [ 0, r ]\cup [ r , t ]$, using It\^{o} isometry, we compute the second moment of $\cU_{r,t}(g^\lambda)$. Firstly, recall \eqref{e:cU_tested},
\begin{equ}  
	\cU_t(g) 
	= \beta \nueff  \int_0^t \int_{ \R^d} \bigg( \int_{ \R^d}
	P_{t-s} (x-y) g(x) dx
	\bigg) dW^\kappa (s,y). 
\end{equ} 
Then,	\begin{equs}\label{U}
		&\| \cU_t( g^\lambda)  -   \cU_r( g^\lambda)    \|_2^2 \\
		& = \beta^2 \nueff^2 \E \bigg( \int_{\R^{1+d}} 	\int_{\R^d}\Bigl( \1_{[0,t]}(s) P_{t-s} (z - y) - \1_{[0,r]}(s) P_{r-s} (z - y)\Bigr) g^\lambda(z) dz  \;  dW^\kappa(s,y) \bigg)^2 \\
		&  \leq\beta^2 \nueff^2 \big( I_A(r,t) + I_B(r,t) \big), 
	\end{equs}
	where $I_A$ and $I_B$ are defined in \eqref{eq:defnIA_IB} above. According to Lemma~\ref{lem:bounds_IAB}, for $\tilde\gamma\in (0,1- \delta$), $\delta, \lambda \in (0,1)$, we have:
	\begin{equs}
	\| \cU_{r,t}( g^\lambda)    \|_2 
	&\leq \beta \nueff( |t -r|^{1  - \delta} +r^{1- \delta -\tilde \gamma} |t -r|^{\tilde\gamma})^{\f 12}
	M^{d+\delta-\kappa/2} \lambda^{ \delta -\f \kappa 2  }  \, \| g\|_\infty\\
	& \lesssim  \beta \nueff  \;  M^{d+\delta-\kappa/2} \;|t -r|^{\f{\tilde\gamma} 2} \lambda^{  \delta -\f \kappa 2  }  \, \| g\|_\infty.
	\end{equs}
Inequality (\ref{eq:EW_cont_est_p}) follows from setting $\gamma=\tilde \gamma/2$ and $\alpha:=\delta-\f \kappa 2$.
The constraint becomes $\alpha+2\gamma<1 -\f \kappa 2$.
	
In order to obtain the bound on $ \| \cU_{r,t}( g^\lambda_x - g^\lambda_y )    \|_p$, we first assume that $|x-y| \ge \lambda M$. It then trivially follows from applying \eqref{eq:EW_cont_est_p} separately to $g_x^\lambda$ and $g_y^\lambda$, whose diameter of support is bounded by $M$, and observing that $\lambda^{\alpha}\le \lambda^{\alpha-1} |x-y| M^{-1}$, as well as $\|g\|_\infty \lesssim M |g|_\Lip$.

In the case $|x-y| \le \lambda M$, it then suffices to note that
	\begin{equ}
	g_x^\lambda - g_y^\lambda = \tilde g_x^\lambda\;,\qquad \tilde g = g - g_{\frac{y-x}\lambda}\;,
	\end{equ}
	and that $\|\tilde g\|_\infty \le \lambda^{-1}|g|_\Lip |x-y|$. Furthermore, the diameter of the support of $\tilde g$ is bounded by twice that of $g$,
	so the bound follows again from \eqref{eq:EW_cont_est_p}.
	\end{proof}

\begin{lemma}\label{lem:X_cont_est} 
Let $d\geq 3$, $p \geq 2$, $\sigma$ be Lipschitz continuous, $\beta < \beta_0(p)$, suppose that $\xi$ satisfies Assumption~\ref{assump-noise}, and set  $X^\epsilon_{r,t} =  X^\epsilon_t - X^\epsilon_r$.
	For any $\gamma \in (0, \f 1 2)$, any $\alpha < 1 - \f \kappa 2 - 2\gamma$ and any $g \in \C^{1}_c$
	with diameter of its support equal to $M$, 
	the following holds for any $0\leq r <t \leq T$, $x,y \in \R^d$ and $\lambda >0$, 
	\begin{equs}
		\label{eq:Xeps_cont_est_p}
		\sup_{\epsilon \in (0,1)} \| X_{r,t}^{\epsilon,  g^\lambda_x}  \|_p &\lesssim 
	\beta   M^{d+\alpha}\, |t -r|^{\gamma} \lambda^{\alpha}  \, \| g\|_\infty\;,  \\
		\label{eq:Xeps_cont_est_p_spatial}
		\sup_{\epsilon \in (0,1)} \| X_{r,t}^{\epsilon,  g^\lambda_x-g^\lambda_y}  \|_p  &\lesssim  \beta M^{d+\alpha}|t -r|^{\gamma} \lambda^{\alpha-1}   |g|_{\Lip} |x-y|\;,
	\end{equs}
	uniformly over $\eps \in (0,1]$.
	\end{lemma}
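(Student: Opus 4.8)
The plan is to run the argument of Lemma~\ref{lem:EW_cont_est} almost verbatim, with the It\^o isometry used there replaced by the Burkholder--Davis--Gundy inequality \eqref{BDG} and the deterministic prefactor $\nueff$ replaced by the uniform moment bound on $\sigma(u_\epsilon)$. So the proof will be a transcription of the additive-noise case, the only genuinely new inputs being the moment estimates of Lemma~\ref{lem:moments} and the $\epsilon$-free domination $\epsilon^{-\kappa}|R(z/\epsilon)|\lesssim|z|^{-\kappa}$ coming from Assumption~\ref{assump-noise}.

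\textbf{Step 1 (mild form).} First I would put $X^{\epsilon,g}_t$ into a form amenable to BDG. Inserting the mild formulation of $u_\epsilon$ into \eqref{Xt}, using $\epsilon^{1-\kappa/2}\cdot\beta\epsilon^{\kappa/2-1}=\beta$ and the same stochastic Fubini that underlies \eqref{e:defvi} (legitimate by the uniform moment bounds of Lemma~\ref{lem:moments} together with $P\in\cH$), one gets
\begin{equ}
X^{\epsilon,g}_t = \beta\int_0^t\!\!\int_{\R^d} G^g_{t-s}(y)\,\sigma(u_\epsilon(s,y))\,\xi^\epsilon(ds,dy)\;,\qquad G^g_r(y):=\int_{\R^d}P_r(x-y)g(x)\,dx\;.
\end{equ}
Splitting $[0,t]=[0,r]\cup[r,t]$ exactly as in \eqref{U}, the increment $X^\epsilon_{r,t}=X^\epsilon_t-X^\epsilon_r$ is the stochastic integral over $[0,t]$ of $\sigma(u_\epsilon(s,\cdot))$ against the kernel equal to $G^g_{t-s}$ on $(r,t]$ and to $G^g_{t-s}-G^g_{r-s}$ on $[0,r]$.

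\textbf{Step 2 (reduction to $I_A+I_B$ and conclusion for \eqref{eq:Xeps_cont_est_p}).} Next I would apply \eqref{BDG} for the noise $\xi^\epsilon$ (whose spatial covariance is $\epsilon^{-\kappa}R(\cdot/\epsilon)$), then Minkowski's inequality to pull the $L^{p/2}(\Omega)$ norm inside the double space--time integral, then H\"older together with $\|\sigma(u_\epsilon(s,y_1))\sigma(u_\epsilon(s,y_2))\|_{p/2}\le\|\sigma(u_\epsilon(s,0))\|_p^2\lesssim1$ (this is \eqref{eq:moment_sigma_u}, uniform in $\epsilon$ since $u_\epsilon(s,y)=u(s/\epsilon^2,y/\epsilon)$), and finally $\epsilon^{-\kappa}|R(z/\epsilon)|\lesssim|z|^{-\kappa}$ and $|G^g_r(y)|\le G^{|g|}_r(y)$. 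For $g$ replaced by $g^\lambda_x$ this produces
\begin{equ}
\|X^{\epsilon,g^\lambda_x}_{r,t}\|_p \lesssim \beta\,\bigl(I_A(r,t)+I_B(r,t)\bigr)^{1/2}\;,
\end{equ}
with $I_A,I_B$ as in \eqref{eq:defnIA_IB} (the centre $x$ dropping out by translation invariance of Lebesgue measure) --- i.e.\ precisely the bound obtained in \eqref{U} for $\cU_{r,t}$, with $\beta\nueff$ replaced by $\beta$. Applying Lemma~\ref{lem:bounds_IAB} with $\delta\in(0,1)$ and $\tilde\gamma\in(0,1-\delta)$, and absorbing powers of $T$ via $|t-r|^{1-\delta}\lesssim T^{1-\delta-\tilde\gamma}|t-r|^{\tilde\gamma}$ and $r^{1-\delta-\tilde\gamma}\lesssim T^{1-\delta-\tilde\gamma}$ on $[0,T]$, gives $\|X^{\epsilon,g^\lambda_x}_{r,t}\|_p\lesssim\beta\,M^{d+\delta-\kappa/2}\,|t-r|^{\tilde\gamma/2}\,\lambda^{\delta-\kappa/2}\,\|g\|_\infty$ uniformly over $\epsilon\in(0,1]$; setting $\gamma=\tilde\gamma/2$ and $\alpha=\delta-\kappa/2$ turns the constraints $\delta\in(0,1)$, $\tilde\gamma\in(0,1-\delta)$ into $\gamma\in(0,\tfrac12)$, $\alpha<1-\tfrac\kappa2-2\gamma$, which is \eqref{eq:Xeps_cont_est_p}.

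\textbf{Step 3 (spatial bound).} The estimate \eqref{eq:Xeps_cont_est_p_spatial} then follows from \eqref{eq:Xeps_cont_est_p} by the same two-case argument as at the end of the proof of Lemma~\ref{lem:EW_cont_est}: when $|x-y|\ge\lambda M$ one applies \eqref{eq:Xeps_cont_est_p} separately to $g^\lambda_x$ and $g^\lambda_y$ and uses $\lambda^\alpha\le M^{-1}\lambda^{\alpha-1}|x-y|$ and $\|g\|_\infty\lesssim M|g|_\Lip$; when $|x-y|\le\lambda M$ one writes $g^\lambda_x-g^\lambda_y=\tilde g^\lambda_x$ with $\tilde g=g-g_{(y-x)/\lambda}$, notes $\|\tilde g\|_\infty\le\lambda^{-1}|g|_\Lip|x-y|$ and that $\supp\tilde g$ has diameter at most $2M$, and applies \eqref{eq:Xeps_cont_est_p} once more. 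Since the whole argument mirrors the additive-noise case, there is no real obstacle; the one point deserving care is checking that every constant generated by \eqref{BDG}, Minkowski, H\"older and the bound on $\epsilon^{-\kappa}R(\cdot/\epsilon)$ is genuinely independent of $\epsilon\in(0,1]$ --- which it is, because the moment bounds of Lemma~\ref{lem:moments} and the domination $\epsilon^{-\kappa}|R(\cdot/\epsilon)|\lesssim|\cdot|^{-\kappa}$ are themselves $\epsilon$-uniform.
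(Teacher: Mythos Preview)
Your proposal is correct and follows essentially the same approach as the paper: split the increment into the integral over $(r,t]$ and the integral over $[0,r]$, apply BDG together with Minkowski and the uniform moment bound \eqref{eq:moment_sigma_u}, use $\epsilon^{-\kappa}|R(\cdot/\epsilon)|\lesssim|\cdot|^{-\kappa}$ to reduce to $I_A$ and $I_B$, and then invoke Lemma~\ref{lem:bounds_IAB} and the two-case spatial argument from Lemma~\ref{lem:EW_cont_est}. The only cosmetic difference is that the paper writes the stochastic integral with respect to the unscaled noise $\xi$ over $[r/\epsilon^2,t/\epsilon^2]$ and then changes variables, whereas you work directly with $\xi^\epsilon$ over $[r,t]$; both routes land on the same deterministic integrals $I_A,I_B$.
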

\begin{proof}	Given spatial stationarity of $u_\epsilon(t,x)$, we only need to work with $ g^\lambda$, without the shifts. 	
	Note that
	\begin{align*}
		&X_t^{\epsilon,  g^\lambda } - X_r^{\epsilon,  g^\lambda } = 
		\epsilon^{1 - \frac \kappa 2}
		\int_{\R^{d}} \big( u_\epsilon(t,x) - u_\epsilon(r,x)\big)  g^\lambda (x) dx = 
		\beta \epsilon^{1 - \frac \kappa 2} \big( \A^{\epsilon}_{r, t} + \B^{\epsilon}_{r, t} \big),  \; \hbox{where}\\
		&\A^{\epsilon}_{r, t} =  \int_{\frac{r}{\epsilon^2}}^{\frac{t}{\epsilon^2}}
		\int_{\R^{d}} \sigma(u(s,y)) \int_{\R^{d}} P_{\frac{t}{\epsilon^2} -s}  (\tfrac{ x}{\epsilon} -y)   g^\lambda (x) dx \, \xi(ds,dy), \\
		&\B^{\epsilon}_{r, t} =
		\int_0^{\frac{r}{\epsilon^2}} \int_{\R^{d}}  \sigma(u(s,y)) 
		\int_{\R^{d}} \big( P_{\frac{t}{\epsilon^2} -s}  (\tfrac{ x}{\epsilon} -y) - P_{\frac{r}{\epsilon^2} -s}  (\tfrac{ x}{\epsilon} -y) \big)   g^\lambda (x) dx \, \xi(ds,dy). 
	\end{align*}
	In contrast to $\cU_t(g)$, these processes are not Gaussian, but we can estimate their $p$th moments
	via the BDG and Minkowski inequalities, followed by a change of variables:
	\begin{align*}
		&\E [ (\A^{\epsilon}_{r, t})^p] \lesssim \E [\langle \A^\epsilon_{ r, \cdot } 
		\rangle_t^{\frac p 2} ]  \\
		& \hspace{1em}=
		\E \bigg[ \Bigl(   
		\int_{\frac{r}{\epsilon^2}}^{\frac{t}{\epsilon^2}} \int_{\R^{4d}} 
		\Bigl( \prod_{i=1}^2     
		\sigma(u(s,y_i))  P_{\frac{t}{\epsilon^2} -s}  (\tfrac{ x_i}{\epsilon} -y_i)   g^\lambda (x_i)  
		\Bigr) R(y_1 -y_2) dx dy ds
		\Bigr)^{\frac p 2} \bigg] \\
		& \hspace{1em}\leq \bigg( \int_{\frac{r}{\epsilon^2}}^{\frac{t}{\epsilon^2}} \int_{\R^{4d}} 
		\Bigl\| \prod_{i=1}^2 \sigma(u(s,y_i))  \Bigr\|_{\frac p 2} 
		\Bigl(\prod_{i=1}^2  P_{\frac{t}{\epsilon^2} -s}  (\tfrac{ x_i}{\epsilon} -y_i)  |g^\lambda (x_i)| \Bigr)  |R(y_1 - y_2)|  dx dy ds
		\bigg)^{\frac p 2}  \\
		& \hspace{1em}\leq 
		\bigg( \epsilon^{\kappa -2} \int_{r}^{t} \int_{\R^{4d}} 
		\Bigl\| \prod_{i=1}^2 \sigma(u_\epsilon(s,y_i))  \Bigr\|_{\frac p 2} 
		\Bigl(\prod_{i=1}^2  P_{t -s}  (x_i -y_i)  |g^\lambda (x_i)| \Bigr)  \epsilon^{-\kappa} \big| R(\tfrac{y_1 - y_2}{\epsilon}) \big| dx dy ds
		\bigg)^{\frac p 2}.  
	\end{align*}
	Recalling $\epsilon^{-\kappa} \bigl|R(\tfrac{y_1 - y_2}{\epsilon})\bigr| \lesssim |y_1 - y_2|^{-\kappa}$ and the moment bounds \eqref{eq:moment_sigma_u}, we have
	\begin{equ} 
		\|\A^{\epsilon}_{r, t} \|_p \lesssim  \epsilon^{\frac\kappa 2 -1 } 
		\bigg( 
		\int_{r}^{t} \int_{\R^{4d}}
		\Bigl(\prod_{i=1}^2  P_{t -s}  (x_i -y_i)  |g^\lambda (x_i)| \Bigr)  |y_1 - y_2|^{-\kappa}  dx dy ds\bigg)^{\frac 1 2} = \epsilon^{\frac\kappa 2 -1 }  I_A(r,t)^{\frac 1 2}.
	\end{equ} 
	Following the same steps, we also have 
	$
	\|\B^{\epsilon}_{r, t} \|_p \leq \| \langle \B^{\epsilon}_{\cdot, t}   \rangle_r^{\frac 1 2} \|_p \lesssim \epsilon^{\frac\kappa 2 -1 }  I_B(r,t)^{\frac 1 2}$.
	Combining the two bounds, we have
	\begin{equ}
		\| X_{t}^{\epsilon,  g^\lambda} 
		- X_r^{\epsilon,  g^\lambda}
		\|_p \leq \tfrac{\beta}{\epsilon^{\frac \kappa 2 -1}} \big( \| \A^{\epsilon}_{r, t}\|_p + \|\B^{\epsilon}_{r, t}\|_p \big) \lesssim I_A(r,t)^{\frac 1 2} + I_B(r,t)^{\frac 1 2}.
	\end{equ}
	We can now apply Lemma~\ref{lem:bounds_IAB} as in the previous proof to obtain \eqref{eq:Xeps_cont_est_p}. The bound \eqref{eq:Xeps_cont_est_p_spatial} then follows in the same way as in Lemma~\ref{lem:EW_cont_est}.
\end{proof}

We can now use the estimates of Lemmas~\ref{lem:EW_cont_est} and~\ref{lem:X_cont_est} to control $\cU_t(\varphi_x^{(n)})$,  $X_t^{\epsilon, \varphi_x^{(n)}}$,  which allows us to conclude that both $\cU_t$ and $X_t^\epsilon$ belong to $\C^\gamma([0,T], \C^\alpha)$.

\begin{proposition}\label{prop:holder_Calpha+bound}
Let $d\geq 3$, $\gamma \in (0, \f 1 2)$ and $\alpha < 1 - \f \kappa 2 - 2\gamma$. 
\begin{enumerate}
\item\label{item1}	Then, there exists a version of $\cU$ in $\C^{\gamma} ([0,T], \C^{\alpha})$. 
\item  Let $\sigma$ be Lipschitz continuous, and let $\xi$ satisfies Assumption~\ref{assump-noise}.
\begin{enumerate}
\item\label{item2a} Then, there exist $ p=p_{\gamma,\alpha} >d$, such that for any $\beta~<~\beta_0(p_{\gamma,\alpha})$ the following holds
	\begin{equ} \label{eq:tight_moments_bound}
		\sup_{\epsilon \in (0,1)} \norm{ \| X^\epsilon \|_{\C^\gamma([0,T], \C^\alpha)} }_p < \infty. 
	\end{equ}
	\item\label{item2b}  For sufficiently small $\beta < \beta_{\gamma,\alpha}$,   $\{  X^\epsilon\}_{\epsilon \in (0,1)}$ is tight in $\C^{\gamma}([0,T], \C^{\alpha})$. 
\end{enumerate}
\end{enumerate}
\end{proposition}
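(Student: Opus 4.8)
The plan is to show that both $\cU$ and the family $\{X^\epsilon\}_{\epsilon\in(0,1)}$ (the latter with constants uniform in $\epsilon$) belong to a space $\C^{\gamma_0,\alpha_0}_{p,T}$ of Definition~\ref{dis-class} for a suitably chosen triple $(p,\gamma_0,\alpha_0)$, and then to feed this into the embedding of Proposition~\ref{prop:time-holder_distribution_process-app} and the tightness criterion of Proposition~\ref{prop:tightness-general}. No analytic input beyond Lemmas~\ref{lem:EW_cont_est} and~\ref{lem:X_cont_est} is needed; the work is in matching exponents.

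First I recall that the fixed $\varphi\in\bar\D$ satisfies $\varphi^{(n)}_x=\varphi^\lambda_x$ with $\lambda=2^{-n}$ in the notation of \eqref{e:defScaling}, so that a bound in terms of $\lambda^{\alpha_0}$ (resp.\ $\lambda^{\alpha_0-1}$) becomes one in terms of $2^{-n\alpha_0}$ (resp.\ $2^{-n(\alpha_0-1)}$). Since $u_\epsilon(0,\cdot)\equiv1$ and $\cU(0,\cdot)\equiv0$, one has $X^\epsilon_t=X^\epsilon_{0,t}$ and $\cU_t=\cU_{0,t}$, so applying the increment estimates of Lemmas~\ref{lem:EW_cont_est} and~\ref{lem:X_cont_est} with $g=\varphi$, $\lambda=2^{-n}$ and arbitrary $0\le r<t\le T$ yields, for all $\gamma_0\in(0,\tfrac12)$ and all $\alpha_0<1-\tfrac\kappa2-2\gamma_0$,
\begin{equ}
\sup_{x}\bigl\|\langle\cU_{r,t},\varphi^{(n)}_x\rangle\bigr\|_p\lesssim|t-r|^{\gamma_0}2^{-n\alpha_0}\;,\qquad
\sup_{0<|x-y|\le2^{-n}}\bigl\|\langle\cU_{r,t},\varphi^{(n)}_x-\varphi^{(n)}_y\rangle\bigr\|_p\lesssim|t-r|^{\gamma_0}2^{-n(\alpha_0-1)}|x-y|\;,
\end{equ}
valid for every $p\ge2$ by Gaussianity of $\cU$, and the identical pair of bounds holds with $X^\epsilon$ in place of $\cU$, with an implicit constant that is uniform in $\epsilon\in(0,1]$ (using the $\epsilon$-uniform moment bounds of Lemma~\ref{lem:moments}, valid for $\beta<\beta_0(p)$). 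By Definition~\ref{dis-class} and its $\C^{\gamma_0}$-in-time version, this is precisely the statement $\cU\in\C^{\gamma_0,\alpha_0}_{p,T}$ and $\sup_{\epsilon\in(0,1)}\|X^\epsilon\|_{\C^{\gamma_0,\alpha_0}_{p,T}}<\infty$.

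Next I choose the parameters. Given the target $\gamma\in(0,\tfrac12)$ and $\alpha<1-\tfrac\kappa2-2\gamma$, fix $p=p_{\gamma,\alpha}>d$ and $\delta_0>0$ such that $\gamma+\tfrac1p+\delta_0<\tfrac12$ and $\alpha+\tfrac dp<1-\tfrac\kappa2-2(\gamma+\tfrac1p+\delta_0)$; this is possible since $\gamma<\tfrac12$ and $\alpha<1-\tfrac\kappa2-2\gamma$, by taking $p$ large and $\delta_0$ small. Then set $\gamma_0:=\gamma+\tfrac1p+\delta_0$ and pick any $\alpha_0\in(\alpha+\tfrac dp,\,1-\tfrac\kappa2-2\gamma_0)$, which is a nonempty interval by the second inequality above; moreover $\gamma_0\in(0,\tfrac12)$, $\gamma_0>\tfrac1p$, $\gamma<\gamma_0-\tfrac1p$ and $\alpha<\alpha_0-\tfrac dp$. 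Proposition~\ref{prop:time-holder_distribution_process-app}, in its finite-interval form, then gives the continuous embedding $\C^{\gamma_0,\alpha_0}_{p,T}\subset L^p(\Omega,\C^\gamma([0,T],\C^\alpha))$. Applied to $\cU$ this proves~(\ref{item1}); applied to $X^\epsilon$ together with the $\epsilon$-uniform bound of the previous paragraph it gives~\eqref{eq:tight_moments_bound}, so~(\ref{item2a}) holds with $\beta_{\gamma,\alpha}:=\beta_0(p_{\gamma,\alpha})$. Finally, for~(\ref{item2b}), inserting the uniform bound $\sup_{\epsilon}\|X^\epsilon\|_{\C^{\gamma_0,\alpha_0}_{p,T}}<\infty$ into the tightness criterion of Proposition~\ref{prop:tightness-general} shows that $\{X^\epsilon\}_{\epsilon\in(0,1)}$ is tight in $\C^\gamma([0,T],\C^\alpha)$.

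The only delicate point is the bookkeeping in the third paragraph: passing from the $L^p$-based class $\C^{\gamma_0,\alpha_0}_{p,T}$ to the pathwise space $\C^\gamma([0,T],\C^\alpha)$ costs $\tfrac1p$ of temporal and $\tfrac dp$ of spatial regularity, while Lemmas~\ref{lem:EW_cont_est}--\ref{lem:X_cont_est} simultaneously impose $\gamma_0<\tfrac12$ and $\alpha_0<1-\tfrac\kappa2-2\gamma_0$; one must check these competing constraints are jointly satisfiable. They are, because the losses are $O(1/p)$ and can be absorbed by enlarging $p$, at the harmless cost of shrinking the admissible coupling window to $\beta<\beta_0(p_{\gamma,\alpha})$. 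Everything else is a direct invocation of results already established.
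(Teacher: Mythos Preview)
Your proof is correct and follows essentially the same approach as the paper: verify membership in $\C^{\gamma_0,\alpha_0}_{p,T}$ via Lemmas~\ref{lem:EW_cont_est}--\ref{lem:X_cont_est}, then invoke the embedding of Proposition~\ref{prop:time-holder_distribution_process-app} after a careful choice of parameters. The only cosmetic difference is that for (\ref{item2b}) you appeal directly to Proposition~\ref{prop:tightness-general}, whereas the paper spells out the compact-embedding-plus-Markov argument inline (applying \eqref{eq:tight_moments_bound} at slightly larger exponents $\gamma',\alpha'$); the content is identical.
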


\begin{proof}
	Let us denote with $\zeta_t$ either $\cU_t$ or $X^\epsilon_t$. We want to verify that $\zeta\in \C_{p, T}^{\gamma_0, \alpha_0}$ and apply Proposition \ref{prop:time-holder_distribution_process-app}.
	Recalling $\varphi^{(n)}=2^{nd} \varphi(2^n\cdot)$ and 
	applying  Lemmas~\ref{lem:EW_cont_est}-\ref{lem:X_cont_est}
	with test functions $\varphi^{(n)}_x$. For any $\gamma_0 \in (0, \f 1 2)$ and $\alpha_0 < 1 - \f \kappa 2 - 2\gamma_0$, we obtain the estimates
	\begin{equs}
		\sup_{x \in  \R^d} 
		\norm{\langle \zeta_{r,t}, \varphi^{(n)}_x \rangle }_{p} &\lesssim 
		2^{-n\alpha_0} |t -r|^{\gamma_0}
		\| \varphi \|_\infty, 
		\\
		\sup_{x\neq y \in  \R^d} 
		\norm{\langle \zeta_{r,t}, \varphi^{(n)}_x - \varphi^{(n)}_y \rangle }_{p} &\lesssim
		2^{-n(\alpha_0-1)} |t -r|^{\gamma_0} | \varphi |_\Lip.
	\end{equs}
	Hence \eqref{eq:lattice+time_control}--\eqref{eq:spatial_control} are satisfied
	and both $\cU_t$ and $X^\epsilon_t$ belong to $\C_{p, T}^{\gamma_0, \alpha_0}$ with  $\|X^\epsilon\|_{\C_{p, T}^{\gamma_0, \alpha_0}}$ uniformly bounded in $\epsilon$.  
	Choosing $p$ sufficiently large and $\beta < \beta_0(p)$ small enough so that 
	Lemma~\ref{lem:X_cont_est} holds, we can then apply Proposition \ref{prop:time-holder_distribution_process-app}
	which yields \eqref{eq:tight_moments_bound} as claimed.
	
We proceed to prove the last statement (\ref{item2b}). Given $\gamma$ and $\alpha$, we can pick real numbers $\gamma'$ and $\alpha'$ such that 
	$$ \gamma < \gamma' < \tfrac  1 2, \qquad \alpha < \alpha' < 1 - \tfrac \kappa 2 - 2\gamma'.$$  
	Since $\C^{\alpha'}$ is compactly embedded
	in $\C^\alpha$, $\C^{\gamma'}([0,T], \C^{\alpha'})$ is compactly embedded in $\C^{\gamma}([0,T], \C^{\alpha})$. 
	Thus $K_\Lambda := \{ X \in  \C^{\gamma}([0,T], \C^{\alpha}) \,: \,  \| X \|_{\C^{\gamma'}([0,T], \C^{\alpha'}) } \leq \Lambda\}$ is  compact in $\C^{\gamma}([0,T], \C^{\alpha})$, for any $\Lambda >0$. Given the uniform moment 
	bound \eqref{eq:tight_moments_bound}, which holds for sufficiently small $\beta < \beta_0(p_{\gamma', \alpha'})$,  we have
	\begin{equ}
		\mathbb{P} ( X^\epsilon \notin K_\Lambda) = \mathbb{P} ( \| X^\epsilon \|_{\C^{\gamma'}([0,T], \C^{\alpha'})} > \Lambda ) \leq \frac{\E \big[ \, \| X^\epsilon \|_{\C^{\gamma'}([0,T], \C^{\alpha'})}^p \,\big]}{\Lambda^p} \lesssim \frac 1 {\Lambda^p}.
	\end{equ}
	Hence $ X^\epsilon$ is tight in $\C^{\gamma}([0,T], \C^{\alpha})$. This completes the proof of the proposition.
\end{proof}

\begin{acks}[Acknowledgments]
This research was supported by the Royal Society through MH's research professorship
RP\textbackslash R1\textbackslash 191065,  by the 
EPSRC through XML's  grant EP/V026100/1, and by the CDT grant EP/S023925/1 where the latter is 
co-investigator. LG is supported by the EPSRC Centre for Doctoral Training in Mathematics of 
Random Systems: Analysis, Modelling 
and Simulation (EP/S023925/1).

  The authors would like to thank the anonymous referees for their
 helpful suggestions.
\end{acks}

\bibliographystyle{imsart-nameyear} 
\bibliography{bibliography}       

\end{document}